\documentclass[12pt,a4]{article}
\usepackage{latexsym}
\usepackage{amsmath}
\usepackage{amssymb}
\usepackage{amsthm}
\usepackage{amscd}

\setlength{\topmargin}{0in}
\setlength{\oddsidemargin}{0.35in}
\setlength{\evensidemargin}{0.35in}
\setlength{\textwidth}{5.7in}
\setlength{\textheight}{8.7in}
\setlength{\parskip}{3mm}

\newtheorem{theorem}{Theorem}[section]
\newtheorem{lemma}[theorem]{Lemma}
\newtheorem{corollary}[theorem]{Corollary}
\newtheorem{proposition}[theorem]{Proposition}

\theoremstyle{definition}

\newtheorem{remark}[theorem]{Remark}
\newtheorem{definition}[theorem]{Definition}

\theoremstyle{remark}

\newcommand{\cS}{{\cal S}}

\newcommand{\D}{{\mathbb D}}
\newcommand{\T}{{\mathbb T}}

\renewcommand{\P}{{\mathbb P}}
\newcommand{\R}{{\mathbb R}}
\newcommand{\N}{{\mathbb N}}
\newcommand{\Z}{{\mathbb Z}}
\newcommand{\C}{{\mathbb C}}
\newcommand{\Q}{{\mathbb Q}}

\begin{document}

\title{Maximal operator\\
for pseudo-differential operators\\
with homogeneous symbols}

\author{Yoshihiro Sawano}

\maketitle

\vspace{1.5cm}
\noindent{\bf Keywords}\ \
pseudo-differential operators,
decomposition of functions,
Fourier transform.

\noindent{\bf $2000$ Mathematics Subject Classification}\ \
Primary 42B10\,; Secondary 47B38.

\begin{abstract}
The aim of the present paper
is to obtain a Sj\"{o}lin-type maximal estimate
for pseudo-differential operators
with homogeneous symbols.
The crux of the proof is to obtain
a phase decomposition formula
which does not involve the time traslation.
The proof is somehow parallel to the paper
by Pramanik and Terwilleger
(P.~Malabika and E.~Terwilleger,
A weak $L^2$ estimate for a maximal dyadic sum operator
on $R^n$,
Illinois J.~Math,
{\bf 47} (2003), no. 3, 775--813).
In the present paper,
we mainly concentrate on our new phase decomposition formula
and the results
in the Cotlar type estimate,
which are different from the ones
by Pramanik and Terwilleger.
\end{abstract}

\section{Introduction}
\noindent

The class $S^0$ is a basic class of pseudo-differential operators,
which has been investigated by many authors.
For example,
it is quite fundamental that the pseudo-differential operators
with symbol $S^0$ are $L^2$-bounded (see \cite{St}).
However, in view of the fact that $L^2 \simeq \dot{F}^0_{22}$,
where $\dot{F}^0_{22}$ is the homogeneous Triebel-Lizorkin space,
there seems to be no need that we assume
\[
\sup_{x \in \R^n, \, |\xi| \le 1}
|\partial^\alpha_\xi \partial^\beta_x a(x,\xi)|<\infty 
\]
for all multiindices $\alpha,\beta$.
Indeed, Grafakos and Torres established
that it suffices to assume
\begin{equation}
\label{eq:c-alpha}
c_{\alpha,\beta}(a)
:=
\sup_{x \in \R^n, \, \xi \in \R^n}
|\xi|^{|\alpha|-|\beta|}|\partial^{\alpha}_\xi\partial^{\beta}_x a(x,\xi)|
<\infty
\end{equation}
for all multiindices $\alpha,\beta$.
Denote by $a(x,D)^\sharp$
the formal adjoint of $a(x,D)$.
It is natural that we assume that
\begin{equation}
\label{eq:moment}
a(x,D)^\sharp 1(x)=0,
\end{equation} 
since one needs to postulate
some moment condition on atoms
for $\dot{F}^0_{22}$
when we consider the atomic decomposition (see \cite{Fr2,Tr4}).

Here and below,
we assume that 
$a \in L^\infty(\R^n \times \R^n) \cap 
C^\infty(\R^n \times (\R^n \setminus \{0\}))$
is a function satisfying (\ref{eq:c-alpha}) and (\ref{eq:moment}).
In \cite{Grafakos-Torres}
Grafakos and Torres established
that
\[
f \in {\cal S}_0
\mapsto
\int_{\R^n}
a(x,\xi)\exp(2\pi i x \cdot \xi){\cal F}^{-1}f(\xi)\,d\xi,
\]
extends to an $L^2$-bounded operator,
where ${\cal S}_0$ denotes
the closed subspace of ${\cal S}$
which consists of the functions
with vanishing moment of any order.

The aim of the present paper is to obtain
a maximal estimate related to this operator.
To formulate our results,
we need some notations.
Given $a,\xi \in \R^n$ and $\lambda>0$,
we define
\begin{align*}
T_a f(x)&:=f(x-a)\\
M_\xi f(x)&:=\exp(2\pi i \xi \cdot x)f(x)\\
D_\lambda f(x)&:=\lambda^{-n/2}f(\lambda^{-1} x).
\end{align*}
Here and below,
we use $A \lesssim_{X,Y,\cdots} B$
to denote that there exists a constant $c>0$
depending only on the parameters $X,Y,\cdots$
such that $A \le c\,B$.
If the constant $c$ above depends only on
$c_{\alpha,\beta}(a)$ and the dimension $n$,
we just write $A \lesssim B$.
If the two-sided estimate
$A \lesssim_{X,Y,\cdots} B \lesssim_{X,Y,\cdots} A$
holds,
then we write
$A \simeq_{X,Y,\cdots} B$.

In the present paper
we establish the following.
\begin{theorem}
\label{thm:main}
The following estimate holds{\rm \,:}
\[
\left|
\left\{
x \in \R^n
\, : \,
\sup_{\xi \in \Q^n}
|M_{-\xi}a(x,D)M_{\xi}f(x)|>\lambda
\right\}
\right|
\lesssim
\frac{1}{\lambda^2}\int_{\R^n}|f(x)|^2\,dx.
\]
\end{theorem}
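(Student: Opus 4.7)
The plan is to follow the time-frequency analysis strategy of Pramanik--Terwilleger, adapted to the pseudo-differential setting of \eqref{eq:c-alpha}--\eqref{eq:moment}. Since $\Q^n$ is countable, a standard linearization argument lets us replace the supremum by a measurable selector $\xi\colon\R^n\to\Q^n$, so that it suffices to bound the linearized operator
\[
T_\xi f(x):=M_{-\xi(x)}\,a(x,D)\,M_{\xi(x)}f(x)
\]
from $L^2(\R^n)$ to weak-$L^2(\R^n)$ with constants independent of $\xi$. A direct symbol computation shows that $T_\xi$ is the pseudo-differential operator with symbol $a(x,\eta+\xi(x))$, which realises the desired Carleson-type structure; a restricted-type reduction then converts the weak bound into a bilinear estimate on measurable sets.

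The heart of the argument is a new phase decomposition. Fix a smooth dyadic partition of unity $\{\psi_k\}_{k\in\Z}$ on $\R^n\setminus\{0\}$ with $\supp\psi_k\subset\{|\xi|\simeq 2^k\}$, and expand each $a(x,\xi)\psi_k(\xi)$ in a Gabor-type series on the annulus $\{|\xi|\simeq 2^k\}$. Rather than producing the customary decomposition of $a(x,D)$ as $\sum_P\langle f,\varphi_P\rangle\,\varphi_P^*$ with wave packets $\varphi_P$ translated in the $x$-variable, the Fourier coefficients are to be fed back into the symbol, so that every appearance of $x$ comes through $a(x,\xi)$ only. The outcome is a decomposition
\[
a(x,D)f(x)=\sum_{P}c_P(x)\,\langle f,\varphi_P\rangle\,\psi_P(x)
\]
indexed by tiles $P=I_P\times\omega_P$ of area one, where the coefficients $c_P(x)$ inherit rapid decay in $2^{k}\,\mathrm{dist}(x,I_P)$ from \eqref{eq:c-alpha} and the moment condition \eqref{eq:moment} supplies the cancellation needed to sum across scales.

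With the decomposition in hand, the selector $\xi$ designates, for each $x$, the tiles $P$ with $\xi(x)\in\omega_P$. I would organise these selected tiles into trees, each with a common top tile, and write $T_\xi$ as a sum of tree operators $\Pi_{\mathbf{T}}$. The mass--energy--tree-selection machinery of Pramanik--Terwilleger then reduces matters to a Cotlar-type almost-orthogonality estimate
\[
\|\Pi_{\mathbf{T}}\Pi_{\mathbf{T}'}^{*}\|+\|\Pi_{\mathbf{T}}^{*}\Pi_{\mathbf{T}'}\|\lesssim_N 2^{-N\,d(\mathbf{T},\mathbf{T}')},\qquad N\in\N,
\]
for trees $\mathbf{T},\mathbf{T}'$ whose top tiles are well separated. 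Combined with the mass and energy lemmas (both of which follow from \eqref{eq:c-alpha}), Cotlar's lemma then delivers the weak-$L^2$ bound.

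The principal obstacle is this Cotlar-type estimate, which departs from Pramanik--Terwilleger. In their setting the symbol is essentially a pure modulation $e^{2\pi i N(x)\cdot\xi}$, so the wave packets factor cleanly under translation and the off-diagonal bounds reduce to classical stationary phase. Here the full $x$-dependence of $a(x,\xi)$ blocks such factorisation, and the decay in $d(\mathbf{T},\mathbf{T}')$ has to be extracted by alternating integration by parts in $x$ and $\xi$ when composing two tree operators, using \eqref{eq:c-alpha} at all orders and exploiting \eqref{eq:moment} to kill the contribution near $\xi=0$ (where the pseudo-differential symbol is only bounded, not differentiable). The phase decomposition above is engineered so that these integrations by parts never move wave packets in time, which is what I expect to render the estimate tractable.
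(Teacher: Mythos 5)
Your linearization step, the restricted-weak-type reduction, and the intention to organise selected tiles into trees all match the paper's strategy, and you correctly identify that the hard part is to replace the pure-modulation symbol of Pramanik--Terwilleger by the full $x$-dependent $a(x,\xi)$ satisfying \eqref{eq:c-alpha}--\eqref{eq:moment}. However, there are two genuine gaps.

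First, the ``Cotlar-type estimate'' you propose is the wrong Cotlar. You ask for almost-orthogonality between tree operators, $\|\Pi_{\mathbf T}\Pi_{\mathbf T'}^*\|+\|\Pi_{\mathbf T}^*\Pi_{\mathbf T'}\|\lesssim_N 2^{-Nd(\mathbf T,\mathbf T')}$, so that Cotlar--Knapp--Stein closes the argument. That is not how Lacey--Thiele, Pramanik--Terwilleger, or this paper proceed, and it is not expected to hold: trees extracted by the density and size lemmas can overlap heavily in both time and frequency, and the quantity $d(\mathbf T,\mathbf T')$ has no uniform meaning that would give summable decay. What the machinery actually requires is a \emph{single-tree} bound ${\rm Sum}(\T)\lesssim{\rm Dense}(\T)\,{\rm Size}(\T)\,|I_t|$, after which the density and size lemmas control ${\rm Count}(\P_j)\lesssim 4^{-j}$ and the tree bounds are simply summed (see Theorem \ref{thm:main3} and Corollary \ref{cor:density-size}). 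The ``Cotlar type estimate'' of Proposition \ref{prop:Cotlar} is instead a \emph{pointwise} Cotlar inequality for maximal truncated operators: it bounds the difference $|a_{v,\eta_0}(x,D)f-a_{u,\eta_0}(x,D)f|$ by $M_{\ge\ell(I_u)}f$ plus the maximal truncated singular integral, and it is used inside the single-tree bound to control $F_{2,J}$ on the Whitney pieces $J\in\mathcal J(\T)$. Your plan would get stuck at the point where it asks for decay in tree separation.

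Second, your phase decomposition is too vague to reproduce the paper's central innovation, and the piece you describe (a Gabor expansion of $a(x,\xi)\psi_k(\xi)$ with coefficients ``fed back into the symbol'') does not match what is actually done. The paper's decomposition keeps the full operator $a(x,D-\xi)$ acting on the standard wave packets $\varphi_s$ and sets $\psi_s^\xi=a(x,D-\xi)\varphi_s$; the symbol is never expanded. What \emph{is} new is Lemma \ref{lem:2-1} and Corollary \ref{cor:2-3}: averaging the model operators $M_{-\eta}A_{\eta,l}M_\eta$ over modulation $\eta$ alone (together with dilations and rotations) already reproduces a constant multiple of the identity, so the translation average $T_{-y}\cdots T_y$ of Pramanik--Terwilleger is superfluous. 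This is exactly what allows the pseudo-differential $x$-dependence to pass through the reduction cleanly. Your proposal never observes this, and without it the reduction to the discrete sum ${\rm Sum}(\P)$ with the objects $\psi_s^{N(\cdot)}$ does not follow.
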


We remark that 
the case when $a(x,\xi)=m(\xi)$ with $m$ homogeneous of degree $0$
was covered by Pramanik and Terwilleger \cite{PrTe}.

In the present paper, we prove Theorem \ref{thm:main}.
In Section \ref{section:phase decomposition},
we obtain a formula
of the Fourier multiplier.
The formula will be a simplification
of \cite{PrTe}
and enables us to extend the results in \cite{PrTe}.
What is new about this formula is
that there is no need to take average
over the time space, 
as will be alluded to in Section \ref{section:phase decomposition}.
We investigate an estimate of Cotlar type
in Section \ref{section:Cotlar}.
In Section \ref{section:proofs},
we shall prove Theorem \ref{thm:main}.
Our proof parallels the one in \cite{PrTe}.
So we will invoke their results and notations.
Finally in Section \ref{section:extension},
we consider an extension to $L^p$ $(1<p<\infty)$
of Theorem \ref{thm:main}.

\section{Preliminaries}
\label{section:preliminaries}
\noindent

Here and below we use the following notations.

\subsection{Notations on cubes}
\noindent

We begin with some notations
for $\R^n$.
\begin{definition}
\begin{enumerate}
\item
We denote $\{0,1,2,\cdots\}$ by $\N_0$.
\item
We equip $\R^n$
with the lexicographic order $\ll$.
Namely,
we define 
\[
x=(x_1,x_2,\cdots,x_n) \ll y=(y_1,y_2,\cdots,y_n), \,
x \ne y
\]
if and only if
$x_1=y_1, x_2=y_2, \cdots, x_{j-1}=y_{j-1}, \, x_j < y_j$
for some $j=1,2,\cdots,n$.
\item
We define
$\vec{\mathbf 1}:=(1,1,\cdots,1)$.
\end{enumerate}
\end{definition}

In the present paper,
we use the following notation
for dyadic cubes.
\begin{definition}
\begin{enumerate}
\item
By a dyadic cube,
we mean the one of the form
\[
Q_{\nu m}
:=\prod_{j=1}^n \left[\frac{m_j}{2^\nu},\frac{m_j+1}{2^\nu}\right)
\]
for $m=(m_1,m_2,\cdots,m_n)$ and $\nu \in \Z$.
We also define
its center and the side-length
by $\displaystyle c(Q_{\nu m}):=
\left(\frac{2m_1+1}{2^{\nu+1}},\cdots,\frac{2m_n+1}{2^{\nu+1}}\right)$
and
$\ell(Q_{\nu m}):=2^{-\nu}$.
\item
Given a dyadic cube $Q$,
we bisect $Q$ into $2^n$ cubes of equal length
and label them $Q_{(1)},Q_{(2)},\cdots,Q_{(2^n)}$ so that
\[
c(Q_{(1)}) \ll c(Q_{(2)}) \ll \cdots \ll c(Q_{(2^n)}).
\]
\end{enumerate}
\end{definition}

Unlike dyadic cubes,
we assume that the cubes are closed.
\begin{definition}
By a cube,
we mean the subset in $\R^n$
of the form
\[
Q(x,r)
:=
\left\{ y=(y_1,y_2,\cdots,y_n) \in \R^n \, : \,
\max_{i=1,2,\cdots,n}|x_i-y_i|
\le r
\right\}
\]
for $x=(x_1,x_2,\cdots,x_n)$ and $r>0$.
The center and the sidelength of $Q=Q(x,r)$ are given by
\[
c(Q):=x, \, \ell(Q):=2r.
\]
Given $\kappa>0$ and a cube $Q=Q(x,r)$,
we define
$\kappa\,Q:=Q(x,\kappa\,r)$.
\end{definition}

\subsection{Notations on tiles and trees}
\noindent

\begin{definition}
\begin{enumerate}
\item
By a tile we mean the closs product
of the form $s=Q_{\nu m} \times Q_{-\nu m'}$
with $\nu \in \Z$ and $m,m' \in \Z^n$.
Given such a tile $s$,
we define $I_s:=Q_{\nu m}$ and $\omega_s:=Q_{-\nu m'}$.
The set of all tiles will be denoted by $\D$.
\item
Let $u,v \in \D$.
Then we define $u \le v$
if and only if
$I_u \subset I_v, \, \omega_u \supset \omega_v$.
\item
A tree is a pair $(\T,t)$,
where $\T \subset \D$ is a finite subset of $\D$
and $t \in \D$ is a tile such that
$t \ge s$ for all $s \in \T$.
We define $\omega_\T:=\omega_t$
and $I_\T:=I_t$.
\item
Let $1 \le i \le 2^n$.
A tree $(\T,t)$ is called an $i$-tree,
if $\omega_{t(i)} \subset \omega_{s(i)}$
for all $s \in \T$.
\end{enumerate}
\end{definition}

Occasionally $t$ is called a top of $\T$.
Note that the top of $\T$ is not unique in general.
In the present paper,
to avoid confusion,
we call a pair $(\T,t)$ a tree in order to specify the top.

\subsection{Notations of auxiliary functions}
\noindent

Here and below
we assume that $\Phi \in \cS$ is a function
satisfying
\[
\chi_{Q(9/100)} \le \Phi \le \chi_{Q(1/10)}.
\]

\begin{definition}
\begin{enumerate}
\item
$\varphi:={\cal F}^{-1}\Phi$.
\item
$\Psi:=\Phi-\Phi(2 \cdot)$.
\item
Given a cube $Q$,
we define
$\displaystyle
\Phi_Q(\xi):=\Phi\left(\frac{\xi-c(Q)}{\ell(Q)}\right).
$
\item
\cite{PrTe}
Given a tile $s \in \D$,
we define
$\displaystyle
\varphi_s(x)
:=
M_{c(\omega_{s(1)})}T_{c(I_s)}D_{\ell(I_s)}\varphi(x).
$
\end{enumerate}
\end{definition}

The following property is easily shown.
\begin{lemma}
\begin{enumerate}
\item
Let $Q$ be a cube.
Then we have
\begin{equation}
\label{eq:080823-1}
\chi_{\frac{27}{25}Q} \le \Phi_{6Q} \le \chi_{\frac{6}{5}Q}.
\end{equation}
\item
Let $s$ be a tile.
Then we have
\begin{equation}
\label{eq:080823-2}
{\cal F}\varphi_s
=T_{c(\omega_{s(1)})}M_{-c(I_s)}D_{\ell(\omega_s)}\Phi.
\end{equation}
In particular,
$\displaystyle
{\rm supp}({\cal F}\varphi_s)
\subset
\frac{1}{5}\omega_{s(1)}.
$
\end{enumerate}
\end{lemma}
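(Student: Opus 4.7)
The plan is to treat the two parts separately; both amount to unwinding definitions, the only real work being careful bookkeeping across the size conventions ($\ell(Q)$ denotes full sidelength, whereas the parameter $r$ in $Q(x,r)$ is the half-sidelength, and $\ell(I_s)\ell(\omega_s)=1$ for every tile).

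For part (1), I would write $Q=Q(x_0,r)$, so that $c(6Q)=x_0$ and $\ell(6Q)=12r$, and then substitute directly into the definition:
\[
\Phi_{6Q}(\xi)=\Phi\!\left(\frac{\xi-x_0}{12r}\right).
\]
The upper inequality is immediate from $\Phi\le\chi_{Q(1/10)}$: if $\Phi_{6Q}(\xi)\ne 0$ then $\max_i|\xi_i-(x_0)_i|\le 12r/10=6r/5$, i.e.\ $\xi\in\frac{6}{5}Q$. The lower inequality follows from $\chi_{Q(9/100)}\le\Phi$ by the same change of variable, using $27/25=12\cdot 9/100$: if $\xi\in\frac{27}{25}Q$ then $(\xi-x_0)/(12r)\in Q(0,9/100)$, whence $\Phi_{6Q}(\xi)\ge 1$.

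For part (2), I would apply, in order, the three standard intertwining identities
\[
{\cal F}\circ M_\eta=T_\eta\circ{\cal F},\qquad {\cal F}\circ T_a=M_{-a}\circ{\cal F},\qquad {\cal F}\circ D_\lambda=D_{1/\lambda}\circ{\cal F},
\]
to the definition $\varphi_s=M_{c(\omega_{s(1)})}T_{c(I_s)}D_{\ell(I_s)}\varphi$. Combining these with ${\cal F}\varphi=\Phi$ and the tile identity $1/\ell(I_s)=\ell(\omega_s)$ (immediate from $s=Q_{\nu m}\times Q_{-\nu m'}$) produces the claimed formula at once. For the support statement, $D_{\ell(\omega_s)}\Phi$ is supported in a cube centred at the origin with half-sidelength a fixed fraction of $\ell(\omega_s)$ coming from $\supp\Phi\subset Q(1/10)$; the modulation $M_{-c(I_s)}$ does not move the support, and the translation $T_{c(\omega_{s(1)})}$ shifts it so that it is centred at $c(\omega_{s(1)})$. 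Since $\omega_{s(1)}$ is a dyadic subcube of $\omega_s$, its sidelength is a fixed multiple of $\ell(\omega_s)$, and a direct comparison of half-sidelengths yields the containment in the appropriate fixed dilate of $\omega_{s(1)}$.

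The only obstacle is purely bookkeeping: there is no analytic content beyond the four elementary identities just listed, and the task is simply to keep the numerical constants consistent through the successive dilations, modulations, and translations.
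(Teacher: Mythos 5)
Your approach is the natural one and, since the paper offers no explicit proof (it says only that the lemma is ``easily shown''), there is nothing to contrast it with: part (1) is a direct substitution, and part (2) follows from the three standard intertwining identities together with $\ell(I_s)\ell(\omega_s)=1$, exactly as you do. Your verification of part (1) and of the displayed identity (\ref{eq:080823-2}) is complete and correct.

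The one place you stop short is the final support containment, where you write ``the appropriate fixed dilate of $\omega_{s(1)}$'' without checking that the appropriate dilate is in fact $\tfrac15$. If you carry out the arithmetic you will find it is not: from $\supp\Phi\subset Q(0,1/10)$ (half-sidelength $1/10$) one gets $\supp D_{\ell(\omega_s)}\Phi\subset Q(0,\ell(\omega_s)/10)$, and after translation $\supp(\mathcal{F}\varphi_s)\subset Q\bigl(c(\omega_{s(1)}),\ell(\omega_s)/10\bigr)$, a cube of half-sidelength $\ell(\omega_s)/10$. Since $\omega_{s(1)}$ has sidelength $\ell(\omega_s)/2$ and hence half-sidelength $\ell(\omega_s)/4$, the dilate $\kappa\,\omega_{s(1)}=Q(c(\omega_{s(1)}),\kappa\ell(\omega_s)/4)$ contains this support precisely when $\kappa\ge 2/5$. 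So the containment actually proved is $\supp(\mathcal{F}\varphi_s)\subset\tfrac25\omega_{s(1)}$, and the constant $\tfrac15$ in the paper's statement appears to be a slip (it would require $\supp\Phi\subset Q(0,1/20)$, which is incompatible with the lower bound $\chi_{Q(9/100)}\le\Phi$ needed to make (\ref{eq:080823-1}) work). You should therefore either record the containment with $\tfrac25$, or note explicitly that $\tfrac15$ cannot be obtained under the paper's stated hypotheses on $\Phi$; either way, carry the constant-tracking through rather than leaving it implicit, since that is the only nontrivial content of this part of the lemma.
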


(\ref{eq:080823-1}) says that $\Phi_{6Q}$
is almost the same as $\chi_Q$.
Meanwhile
(\ref{eq:080823-2}) implies that 
the frequency support of $\varphi_s$ is concentrated
near $c(\omega_{s(1)})$.

The following lemma is easy to show
by using the Planchrel theorem.
\begin{lemma}
\label{lem:080827-1}
Let $\xi \in \R^n$.
Then we have
\[
\left(\sum_{s \in \D\,:\,\omega_{s(2^n)} \ni \xi}
|\langle f,\varphi_s \rangle_{L^2}|^2\right)^\frac12
\lesssim
\| f \|_2.
\]
\end{lemma}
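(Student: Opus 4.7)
The plan is to parameterize the sum by the frequency scale $\nu$, use Plancherel at each scale to turn the sum over spatial translates into a Fourier series on the torus, and finally control the overlap of the resulting bumps across scales.

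First I would observe that, for each $\nu \in \Z$, the condition $\omega_{s(2^n)}\ni\xi$ pins down $\omega_s$ uniquely whenever it is satisfiable at all: $\omega_{s(2^n)}$ must coincide with the dyadic cube of sidelength $2^{\nu-1}$ containing $\xi$, and $\omega_s$ must be its dyadic parent. What remains free at scale $\nu$ is $I_s = Q_{\nu m}$ with $m\in\Z^n$. By (\ref{eq:080823-2}) and Plancherel, the pairing $\langle f,\varphi_s\rangle_{L^2}$ is an integral of ${\cal F}f$ against a bump supported in $\tfrac{1}{5}\omega_{s(1)}$, modulated by $\exp(-2\pi i c(I_s)\cdot(\eta - c(\omega_{s(1)})))$. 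After the change of variables $\eta = c(\omega_{s(1)}) + 2^\nu \eta'$ the phase becomes $\exp(2\pi i (m+\vec{\mathbf 1}/2)\cdot\eta')$ and the integrand is supported in $Q(0,1/10)\subset[-1/2,1/2]^n$, so Parseval on $\T^n$ yields the clean identity
\[
\sum_{I_s}|\langle f,\varphi_s\rangle_{L^2}|^2 = \int |{\cal F}f(\eta)|^2\,\Bigl|\Phi\!\Bigl(\tfrac{\eta - c(\omega_{s(1)})}{2^\nu}\Bigr)\Bigr|^2\,d\eta.
\]

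The main step is then to sum in $\nu$ and bound $\sum_\nu |\Phi((\eta - c(\omega_{s(1)}))/2^\nu)|^2$ uniformly in $\eta$. This is the only real work. I would combine the support condition $|\eta_j - c(\omega_{s(1)})_j|\le 2^\nu/10$ with the geometric fact $\xi_j - c(\omega_{s(1)})_j \in [2^\nu/4, 3\cdot 2^\nu/4]$, which follows from $c(\omega_{s(2^n)}) - c(\omega_{s(1)}) = 2^{\nu-1}\vec{\mathbf 1}$ together with $\xi\in\omega_{s(2^n)}$. Subtracting confines $(\xi-\eta)_j/2^\nu$ to $[3/20, 17/20]$ for every coordinate $j$, so $2^\nu$ is trapped in an interval of ratio $17/3$ and hence at most three scales can contribute. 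Feeding this bounded overlap back into the identity and applying Plancherel on $\R^n$ completes the estimate.
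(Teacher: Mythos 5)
Your proof is correct and fills in, with full detail, exactly the Plancherel-based argument the paper invokes in one line: Plancherel to move to the Fourier side, Parseval on $\T^n$ over the spatial translates at a fixed scale $\nu$ to get the clean identity $\sum_{m}|\langle f,\varphi_s\rangle|^2=\int|{\cal F}f|^2\,|\Phi((\cdot-c(\omega_{s(1)}))/2^\nu)|^2$, and the support comparison between $\tfrac{1}{5}\omega_{s(1)}$ and $\omega_{s(2^n)}\ni\xi$ to see that at most a bounded number of scales $\nu$ contribute at each frequency point. The only minor point glossed over is the half-integer shift $\vec{\mathbf 1}/2$ in the phase, but as you implicitly use, $\exp(\pi i\,\vec{\mathbf 1}\cdot\eta')$ can be absorbed into the modulus-preserving factor multiplying $\Phi$ before applying Parseval, so the identity stands.
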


Next,
we consider the model operator.
\begin{definition}
The (model) dyadic operator is given by
\[
A_{\xi,\P} f(x)
:=
\sum_{s \in \P\,:\,, \omega_{s(2^n)} \ni \xi}
\langle f,\varphi_s \rangle_{L^2}\varphi_s, \,
\P \subset \D, \, \xi \in \R^n.
\]
\end{definition}

\begin{lemma}[\cite{PrTe}]
\label{lem:080825-1}
$A_{\xi,\P}$ is $L^2$-bounded
uniformly over $\P \subset \D$ and $\xi \in \R^n${\rm\,:}
\[
\| A_{\xi,\P} \, : \, B(L^2) \|
\lesssim
1.
\]
\end{lemma}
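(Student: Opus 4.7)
The plan is to deduce this from Lemma \ref{lem:080827-1} by a short duality argument. Set
\[
\cF_\xi:=\{s\in\D\,:\,\omega_{s(2^n)}\ni\xi\},
\]
so that $A_{\xi,\P}f=\sum_{s\in\P\cap\cF_\xi}\langle f,\varphi_s\rangle_{L^2}\,\varphi_s$. The operator therefore factors as an analysis map, a coordinate restriction to $\P$, and a synthesis map over $\cF_\xi$.

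First I would observe that Lemma \ref{lem:080827-1} states precisely that the analysis map
\[
f\longmapsto\bigl(\langle f,\varphi_s\rangle_{L^2}\bigr)_{s\in\cF_\xi}
\]
is bounded $L^2(\R^n)\to\ell^2(\cF_\xi)$ with norm $\lesssim 1$, uniformly in $\xi$. Dualizing this, the associated synthesis map $(c_s)\mapsto\sum_{s\in\cF_\xi}c_s\varphi_s$ is bounded $\ell^2(\cF_\xi)\to L^2(\R^n)$ with the same norm: for any $g\in L^2$, Cauchy--Schwarz and one more application of the lemma give
\[
\Bigl|\Bigl\langle\sum_{s\in\cF_\xi}c_s\varphi_s,\,g\Bigr\rangle_{L^2}\Bigr|
\le\Bigl(\sum_{s\in\cF_\xi}|c_s|^2\Bigr)^{1/2}\Bigl(\sum_{s\in\cF_\xi}|\langle g,\varphi_s\rangle_{L^2}|^2\Bigr)^{1/2}
\lesssim\bigl(\textstyle\sum_s|c_s|^2\bigr)^{1/2}\|g\|_2.
\]

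Combining the two bounds, together with the trivial fact that restriction to the subset $\P\cap\cF_\xi\subset\cF_\xi$ is a contraction on $\ell^2(\cF_\xi)$, yields
\[
\|A_{\xi,\P}f\|_2
\lesssim\Bigl(\sum_{s\in\P\cap\cF_\xi}|\langle f,\varphi_s\rangle_{L^2}|^2\Bigr)^{1/2}
\le\Bigl(\sum_{s\in\cF_\xi}|\langle f,\varphi_s\rangle_{L^2}|^2\Bigr)^{1/2}
\lesssim\|f\|_2
\]
uniformly in $\P$ and $\xi$, which is the claim. The entire content of the argument is packaged into Lemma \ref{lem:080827-1}; beyond that lemma (whose proof via Plancherel exploits that the Fourier supports of $\{\varphi_s\}_{s\in\cF_\xi}$ are well-separated across scales by (\ref{eq:080823-2})), no genuine obstacle remains.
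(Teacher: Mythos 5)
Your argument is correct and realizes the route the paper itself points to as the ``alternative way'' via Lemma \ref{lem:080827-1}: the Bessel bound of that lemma, applied once to $f$ and once, dually, to a test function $g$, immediately bounds $A_{\xi,\P}=S\circ R_\P\circ T$ (analysis $T$, restriction $R_\P$, synthesis $S=T^*$) on $L^2$. The paper's one-line proof gestures at two options (molecular decomposition, or almost-orthogonality plus Lemma \ref{lem:080827-1}) without details; your duality/$TT^*$ phrasing is simply a clean way to cash in the second option, since the ``almost-orthogonality'' of the $\varphi_s$ (near-disjoint Fourier supports across scales) is exactly what is packaged inside Lemma \ref{lem:080827-1}. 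So this is essentially the same approach, fleshed out correctly; the only thing worth saying explicitly if you wanted to be fully pedantic is that the sums are over finite $\P$ (or converge unconditionally thanks to the square-summability), so no issue with rearrangement in the duality step.
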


\begin{proof}
It is convenient to rely on the molecular decomposition
described in \cite{Tr4}.
An alternative way to the proof is 
that we depend on the almost-orthogonality and Lemma \ref{lem:080827-1}.
\end{proof}

\subsection{Integral kernel of $a(x,D)$}
\noindent

We define
\[
a_j(x,D)f(x)
:=
\int_{\R^n \times \R^n}a(x,\xi)\Psi(2^{-j}\xi)\exp(2\pi i \xi \cdot (x-y))
f(y)\,dy\,d\xi, \, j \in \Z^n,
\]
where we have defined $\Psi=\Phi-\Phi(2\cdot)$.
Then we have
\[
a_j(x,D)f(x)=\int k_j(x,x-z)f(z)\,dz.
\]
The integral kernel can be written as
\[
k_j(x,z)
:=
\int_{\R^n}
a(x,\xi)\Psi(2^{-j}\xi)\exp(2\pi i \xi \cdot z)\,d\xi.
\]
It is not so hard to show
the following estimate using integration by parts.
\begin{lemma}
Let $\alpha,\beta \in \N_0{}^n$.
Then we have
\[
|\partial^\alpha_x \partial^\beta_z k_j(x,z)|
\lesssim_{\alpha,\beta,L}
\min(2^{j(n+|\alpha|+|\beta|)},2^{j(n+|\alpha|+|\beta|)-2j L}|z|^{-2L}).
\]
\end{lemma}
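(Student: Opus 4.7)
The plan is to reduce the derivative bound to a direct integral estimate and then, for the gain in $|z|$, to integrate by parts in $\xi$ using the standard identity $(-\Delta_\xi)^L e^{2\pi i \xi \cdot z} = (2\pi |z|)^{2L} e^{2\pi i \xi \cdot z}$. First I would differentiate under the integral sign to obtain
\[
\partial^\alpha_x \partial^\beta_z k_j(x,z)
=
\int_{\R^n} \partial^\beta_x\!\cdots\,
\text{(not this)}
\]
—more precisely,
\[
\partial^\alpha_x \partial^\beta_z k_j(x,z)
=
\int_{\R^n}
(\partial^\alpha_x a)(x,\xi)\,(2\pi i\xi)^\beta\,\Psi(2^{-j}\xi)\,e^{2\pi i \xi\cdot z}\,d\xi.
\]
Note that $\Psi=\Phi-\Phi(2\cdot)$ is supported in an annular set on which $|\xi|\simeq 1$, so $\Psi(2^{-j}\cdot)$ is supported where $|\xi|\simeq 2^j$, a set of measure $\lesssim 2^{jn}$.

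For the first bound, I place absolute values inside the integral. On the support of $\Psi(2^{-j}\xi)$, the assumption (\ref{eq:c-alpha}) applied with no $\xi$-derivative gives $|\partial^\alpha_x a(x,\xi)| \lesssim |\xi|^{|\alpha|} \lesssim 2^{j|\alpha|}$, while $|(2\pi i\xi)^\beta| \lesssim 2^{j|\beta|}$. Multiplying by the measure of the support yields the desired estimate $2^{j(n+|\alpha|+|\beta|)}$.

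For the second bound, I insert the identity $e^{2\pi i \xi\cdot z}=(2\pi|z|)^{-2L}(-\Delta_\xi)^L e^{2\pi i \xi\cdot z}$ and integrate by parts $2L$ times; the boundary terms vanish because $\Psi(2^{-j}\cdot)$ is compactly supported away from $\xi=0$. This gives
\[
\partial^\alpha_x \partial^\beta_z k_j(x,z)
=
(2\pi|z|)^{-2L}\!\int_{\R^n}\!(-\Delta_\xi)^L\!
\bigl[(\partial^\alpha_x a)(x,\xi)(2\pi i\xi)^\beta\Psi(2^{-j}\xi)\bigr]
e^{2\pi i \xi\cdot z}\,d\xi.
\]
Expanding by Leibniz produces a finite sum of terms of the form $(\partial^{\gamma_1}_\xi\partial^\alpha_x a)(x,\xi)\,\partial^{\gamma_2}_\xi(\xi^\beta)\,\partial^{\gamma_3}_\xi\Psi(2^{-j}\xi)$ with $|\gamma_1|+|\gamma_2|+|\gamma_3|=2L$. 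On the support, (\ref{eq:c-alpha}) yields $|\partial^{\gamma_1}_\xi\partial^\alpha_x a|\lesssim 2^{j(|\alpha|-|\gamma_1|)}$, the polynomial factor contributes $\lesssim 2^{j(|\beta|-|\gamma_2|)}$, and each derivative of $\Psi(2^{-j}\cdot)$ costs $2^{-j|\gamma_3|}$; together these produce $2^{j(|\alpha|+|\beta|-2L)}$. Multiplying by the measure $2^{jn}$ of the support and by $(2\pi|z|)^{-2L}$ gives exactly $2^{j(n+|\alpha|+|\beta|)-2jL}|z|^{-2L}$.

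There is no real obstacle here; the only point requiring mild care is bookkeeping in the Leibniz expansion so that the $\xi$-derivatives falling on $a$ are matched correctly against the homogeneity estimate (\ref{eq:c-alpha}) — in particular using that the $\xi$-derivative of $a$ of order $|\gamma_1|$ gains exactly $|\xi|^{-|\gamma_1|}\simeq 2^{-j|\gamma_1|}$, which is what makes the powers of $2^j$ combine into the clean exponent $n+|\alpha|+|\beta|-2L$.
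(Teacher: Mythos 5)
Your proof is correct and is exactly the standard integration-by-parts argument that the paper alludes to without writing out (the paper's ``proof'' is the single sentence ``It is not so hard to show the following estimate using integration by parts''). The bookkeeping checks out: the homogeneity condition (\ref{eq:c-alpha}) gives $|\partial^{\gamma_1}_\xi\partial^\alpha_x a(x,\xi)|\lesssim|\xi|^{|\alpha|-|\gamma_1|}$, and on the annulus $|\xi|\simeq 2^j$ supporting $\Psi(2^{-j}\cdot)$ the three factors contribute $2^{j(|\alpha|-|\gamma_1|)}$, $2^{j(|\beta|-|\gamma_2|)}$, $2^{-j|\gamma_3|}$ with $|\gamma_1|+|\gamma_2|+|\gamma_3|=2L$, which together with the measure $\lesssim 2^{jn}$ of the support and the factor $(2\pi|z|)^{-2L}$ gives the stated bound; the first bound is the case with no integration by parts. (Tidy up the stray half-sentence before the displayed formula for $\partial^\alpha_x\partial^\beta_z k_j$.)
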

A direct consequence of this lemma is that
\[
\sum_{j=-\infty}^\infty
|\partial^\alpha_x \partial^\beta_z k_j(x,z)|
\lesssim_{\alpha,\beta}
|z|^{-(n+|\alpha|+|\beta|)}.
\]
Let us set
$\displaystyle
k(x,z)
:=
\sum_{j=-\infty}^\infty k_j(x,z)
$
and write $a(x,D)$ as
\[
a(x,D)f(x)=\int k(x,x-z)f(z)\,dz, \, x \notin {\rm supp}(f)
\]
in terms of the integral kernel.
Recall that $a(x,D)$ is proved to be $L^2$-bounded
(see \cite{Grafakos-Torres}).
As a consequence we have
\begin{equation}
\label{eq:080823-3}
\int_{\R^n}
\sup_{\varepsilon>0}
\left|
a(x,D)[\chi_{\R^n \setminus Q(x,\varepsilon)}f](x)
\right|^2\,dx
\lesssim
\int_{\R^n}
|f(x)|^2\,dx.
\end{equation}
(\ref{eq:080823-3}) is known as
the maximal estimate
of the truncated singular integral operator
(see \cite{St}).

\section{Simplified phase decomposition formula
and some reductions of Theorem \ref{thm:main}}
\label{section:phase decomposition}
\noindent

In this section,
based on the notation in Section \ref{section:preliminaries},
we obtain a simplified phase decomposition formula.

\subsection{Simplified phase decomposition formula}

\begin{definition}
The model operator $A_{\eta,l}$ of the $l$-th generation
is defined by
\[
A_{\eta,l} f(x):=
\sum_{s \in \D \,:\, \omega_{s(2^n)} \ni \eta, \, |I_s|=2^{ln}}
\langle f,\varphi_s \rangle_{L^2} \varphi_s.
\]
\end{definition}

\begin{lemma}
\label{lem:2-1}
There exists a function $m \in C^\infty(\R^n \setminus \{0\})$
such that
\[
\lim_{N \to \infty}
\int_{Q_N}
M_{-\eta}A_{\eta,l}M_{\eta}f\,\frac{d\eta}{|Q_N|}
=
{\cal F}^{-1}[m(2^{l}\cdot) \cdot {\cal F}f]
\]
for any sequence of cubes $\{Q_N\}_{N \in \N}$
such that $2Q_N \subset Q_{N+1}$ for all $N \in \N$,
where the convergence takes place
in the strong topology of $L^2$.
\end{lemma}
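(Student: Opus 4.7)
The plan is to compute the average on the Fourier side. By Plancherel, it suffices to show that $\cF$ of the average converges in $L^2(d\xi)$ to $m(2^l\xi)\hat f(\xi)$. Using $\cF[M_{-\eta}g](\xi)=\hat g(\xi+\eta)$, I first derive an explicit pointwise-in-$\eta$ formula for $\cF[M_{-\eta}A_{\eta,l}M_\eta f](\xi)$.

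At generation $l$ every tile has $\ell(I_s)=2^l$, $\ell(\omega_s)=2^{-l}$, and $\omega_{s(2^n)}=Q_{l+1,2m'+\vec{\mathbf 1}}$ whenever $\omega_s=Q_{l,m'}$. Hence the condition $\omega_{s(2^n)}\ni\eta$ is satisfied only when $\eta\in E:=\bigcup_{m'\in\Z^n}Q_{l+1,2m'+\vec{\mathbf 1}}$, in which case $m'$ (and hence $\eta_s:=c(\omega_{s(1)})$) is uniquely determined by $\eta$, while $m$ (and hence $x_s:=c(I_s)\in 2^l(\Z^n+\vec{\mathbf 1}/2)$) still ranges over $\Z^n$. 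Substituting (\ref{eq:080823-2}), expanding $\langle M_\eta f,\varphi_s\rangle$ by Plancherel, and performing Poisson summation over the spatial lattice $\{x_s\}$ produces a sum over $k\in\Z^n$ of products $\Phi(2^l(\xi+\eta-\eta_s)+k)\,\Phi(2^l(\xi+\eta-\eta_s))$. Since $\supp\Phi\subset Q(1/10)$, only the $k=0$ term survives, giving the clean formula
\[
\cF[M_{-\eta}A_{\eta,l}M_\eta f](\xi)=\chi_E(\eta)\,\Phi\!\bigl(2^l(\xi+\eta-\eta_s(\eta))\bigr)^2\hat f(\xi).
\]

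Next I would average over $\eta\in Q_N$. The $\eta$-integrand is periodic of period $2^{-l}\Z^n$, since translating $\eta$ by $2^{-l}e_j$ shifts $m'$ by $e_j$ and $\eta_s(\eta)$ by $2^{-l}e_j$, leaving $\eta-\eta_s(\eta)$ and $\chi_E$ invariant. A change of variable $u=2^l(\eta-\eta_s(\eta))\in[1/4,3/4]^n$ computes the mean per period as $m(2^l\xi)$ with
\[
m(\zeta):=\int_{[1/4,3/4]^n}\Phi(\zeta+u)^2\,du.
\]
Since $2Q_N\subset Q_{N+1}$ forces $|Q_N|\to\infty$, the $\eta$-average of a $2^{-l}\Z^n$-periodic function tends to its mean per period, giving pointwise convergence of the average in $\xi$ to $m(2^l\xi)\hat f(\xi)$. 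The integrand is uniformly dominated by $\|\Phi\|_\infty^2|\hat f(\xi)|\in L^2$, so dominated convergence upgrades pointwise to $L^2$ convergence, and Plancherel yields the desired strong $L^2$ convergence. The regularity $m\in C^\infty(\R^n\setminus\{0\})$ is automatic (in fact $m\in C^\infty_c(\R^n)$ is supported away from $0$).

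The main technical obstacle is the Poisson summation bookkeeping combined with identifying the correct periodicity of the $\eta$-integrand; once these are set up, the remaining steps are routine dominated convergence and Plancherel.
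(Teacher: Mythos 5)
Your proposal is correct and follows essentially the same route as the paper: both compute the symbol of $M_{-\eta}A_{\eta,l}M_\eta$ explicitly on the Fourier side, collapse the lattice sum over $c(I_s)$ using the reproducing property of the frequency-localized exponentials (the paper calls this a ``Fourier expansion,'' you phrase it as Poisson summation with only $k=0$ surviving because $\supp\Phi\subset Q(1/10)$ --- these are the same fact), exploit the $2^{-l}\Z^n$-periodicity in $\eta$ to identify the mean per period as $m(2^l\xi)$, and arrive at the same $m$. The only cosmetic difference is that you justify the $N\to\infty$ limit by dominated convergence in $\xi$, whereas the paper first passes to $f\in\cS_0$ via uniform $L^2$-boundedness and density; both are standard and equally valid.
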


\begin{proof}
The family of operators
\[
\left\{
\int_{Q_N}
M_{-\eta}A_{\eta,l}M_{\eta}\,\frac{d\eta}{|Q_N|}
\right\}_{N \in \N}
\]
being uniformly bounded in $B(L^2)$,
we can assume that $f \in {\cal S}_0$
to investigate the limit as $N \to \infty$.
Let us consider
\[
{\cal F}
\left(
\int_{Q_N}
M_{-\eta}A_{\eta,l}M_{\eta}\,\frac{d\eta}{|Q_N|}
\right){\cal F}^{-1}f
=
\int_{Q_N}
{\cal F}M_{-\eta}A_{\eta,l}M_{\eta}{\cal F}^{-1}f\,\frac{d\eta}{|Q_N|}.
\]
Let us denote by $Q_l=Q_l(\eta)$
the unique dyadic cube with $\ell(Q_l)=2^{-l}$
such that $\eta \in Q_{l(2^n)}$,
if there exists.
By using the Fourier expansion and (\ref{eq:080823-2}),
we have, assuming the existence of such $Q_l$,
\begin{align*}
\lefteqn{
{\cal F}M_{-\eta}A_{\eta,l}M_{\eta}{\cal F}^{-1}f
}\\
&=
\sum_{s \in \D\,:\, \omega_{s(2^n)} \ni \eta, \, |I_s|=2^{ln}}
\langle M_{\eta}{\cal F}^{-1}f,\varphi_s \rangle_{L^2} 
{\cal F}M_{-\eta}\varphi_s\\
&=
\sum_{s \in \D\,:\, \omega_{s(2^n)} \ni \eta, \, |I_s|=2^{ln}}
\langle f,{\cal F}M_{-\eta}\varphi_s \rangle_{L^2} 
{\cal F}M_{-\eta}\varphi_s\\
&=
\sum_{s \in \D\,:\, \omega_{s(2^n)} \ni \eta, \, |I_s|=2^{ln}}
\langle f,T_{c(\omega_{s(1)})-\eta}
M_{c(I_s)}
D_{\ell(\omega_s)}\Phi \rangle_{L^2} 
T_{c(\omega_{s(1)})-\eta}
M_{c(I_s)}
D_{\ell(\omega_s)}\Phi\\
&=
f
\cdot
\left|
\Phi\left(\frac{\cdot+\eta-c(Q_{l(1)})}{\ell(Q_l)}\right)
\right|^2.
\end{align*}
Inserting this equality,
we obtain
\[
{\cal F}
\left(
\int_{Q_N}
M_{-\eta}A_{\eta,l}M_{\eta}\,\frac{d\eta}{|Q_N|}
\right){\cal F}^{-1}f
=
m_l
\cdot
f,
\]
where
\[
m_l:=
2^{ln}
\int_{Q_{l+1,\vec{\mathbf 1}}}
\left|\Phi\left(2^l(\cdot+\eta-2^{-2}\vec{\mathbf 1})\right)\right|^2
\,d\eta
=
\int_{\frac{\vec{\mathbf 1}}{2}+Q\left(\frac14\right)}
\left|\Phi\left(2^l\cdot+\zeta\right)\right|^2
\,d\zeta.
\]
Hence,
we have the desired result
with
$\displaystyle
m:=
\int_{\frac{\vec{\mathbf 1}}{2}+Q\left(\frac14\right)}
\left|\Phi\left(\cdot+\zeta\right)\right|^2
\,d\zeta.
$
\end{proof}

\begin{corollary}
\label{cor:2-2}
Keep to the same notation as Lemma \ref{lem:2-1}.
Define
\begin{equation}
\label{eq:080829-1}
M(\xi):=\sum_{l=-\infty}^\infty m(2^l \xi).
\end{equation}
Then we have
\[
\lim_{L \to \infty}
\sum_{l=-L}^L
\left(
\lim_{N \to \infty}\int_{Q_N}
M_{-\eta}A_{\eta,l} M_{\eta}f\,\frac{d\eta}{|Q_N|}
\right)
=
{\cal F}^{-1}(M \cdot {\cal F}f),
\]
where the convergence takes place in the strong topology of $L^2$.
\end{corollary}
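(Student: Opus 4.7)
The plan is to reduce the claim to a statement about Fourier multipliers via Plancherel, and then verify convergence at the level of the multiplier by dominated convergence.

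First, by Lemma \ref{lem:2-1}, each inner limit equals $\mathcal{F}^{-1}[m(2^{l}\cdot)\cdot \mathcal{F}f]$. Pulling the finite sum $\sum_{l=-L}^{L}$ through $\mathcal{F}^{-1}$ and writing $M_{L}(\xi):=\sum_{l=-L}^{L}m(2^{l}\xi)$, the claim becomes
\[
\bigl\|\mathcal{F}^{-1}[(M-M_{L})\cdot\mathcal{F}f]\bigr\|_{2}\longrightarrow 0\qquad(L\to\infty),
\]
which by Plancherel reduces to showing $(M-M_{L})\mathcal{F}f\to 0$ in $L^{2}$.

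Next I would analyze the support of $m$. Recall from the proof of Lemma \ref{lem:2-1} that
\[
m(\eta)=\int_{\frac{\vec{\mathbf 1}}{2}+Q(1/4)}|\Phi(\eta+\zeta)|^{2}\,d\zeta.
\]
Since $\supp\Phi\subset Q(1/10)$, the integrand vanishes unless $\eta\in -\zeta+Q(1/10)$ for some $\zeta\in\frac{\vec{\mathbf 1}}{2}+Q(1/4)$. Using the Minkowski-sum computation $Q(1/10)-Q(1/4)=Q(7/20)$, this gives
\[
\supp m\subset -\tfrac{\vec{\mathbf 1}}{2}+Q(7/20),
\]
a compact set that avoids the origin (since the first coordinate $-1/2$ has absolute value $1/2>7/20$). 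Thus $m\in C_{c}^{\infty}(\R^{n}\setminus\{0\})$.

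Because $\supp m$ is compact and bounded away from $0$, for each $\xi\neq 0$ the dyadic dilates $2^{l}\xi$ $(l\in\Z)$ enter $\supp m$ for at most a bounded number $C_{n}$ of indices $l$, uniformly in $\xi$. Consequently $M(\xi)=\sum_{l}m(2^{l}\xi)$ is a finite sum for every $\xi\neq 0$, with $\|M\|_{\infty}\le C_{n}\|m\|_{\infty}$, and the partial sums obey $M_{L}\to M$ pointwise on $\R^{n}\setminus\{0\}$ with the uniform bound $|M_{L}|\le C_{n}\|m\|_{\infty}$. The function $\bigl(2C_{n}\|m\|_{\infty}|\mathcal{F}f|\bigr)^{2}$ is an $L^{1}$ majorant for $|(M-M_{L})\mathcal{F}f|^{2}$, so dominated convergence yields $L^{2}$-convergence as required.

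The only nontrivial step is the geometric verification that $\supp m$ is compactly contained in $\R^{n}\setminus\{0\}$; once this is in hand, the rest is Plancherel plus dominated convergence, and no technical obstacle arises.
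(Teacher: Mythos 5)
Your proof is correct and is essentially the argument the paper leaves implicit: by Lemma \ref{lem:2-1} and Plancherel the claim reduces to $(M-M_L)\mathcal{F}f\to 0$ in $L^2$, and your verification that $\supp m\subset -\tfrac{\vec{\mathbf 1}}{2}+Q(7/20)$ is compact and bounded away from the origin (using $\supp\Phi\subset Q(1/10)$ and $1/2>7/20$) gives the uniform bound on the number of nonvanishing dyadic dilates, so dominated convergence applies.
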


With this result,
we can obtain
a (simpler) decomposition of the phase space.
Recall that $SO(n)$ denotes
the set of all orthogonal matrices
with determinant $1$.
Since $SO(n)$ is compact,
it carries the normalized Haar measure $\mu$.
We define $\rho:SO(n) \to U(L^2)$
as the unitary representation of $SO(n)$,
namely, we define
\[
\rho(A)f:=f(A^{-1}\cdot), \, f \in L^2.
\]
\begin{corollary}
\label{cor:2-3}
Keep to the same notation as Lemma \ref{lem:2-1}.
Let $\alpha>0$ be a constant given by
\[
\alpha:=\int_{SO(n)}\int_0^1 M(2^\kappa A\,\xi)\,d\kappa\,d\mu
\]
for $\xi \in \R^n \setminus \{0\}$.
Then we have
\begin{align*}
\lefteqn{
\alpha\,id_{L^2}
}\\
&=
\int_{SO(n)}\int_0^1
\left(
\sum_{l=-\infty}^\infty
\lim_{N \to \infty}
\int_{Q_N}
\rho(A^{-1})D_{2^{-\kappa}}
M_{-\eta}A_{\eta,l} M_{\eta}D_{2^\kappa}\rho(A)\,\frac{d\eta}{|Q_N|}
\right)\,d\kappa\,d\mu,
\end{align*}
where all the convegences take place in the strong topology of $L^2$.
\end{corollary}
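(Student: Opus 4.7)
The plan is to apply Corollary \ref{cor:2-2} for each fixed $(A,\kappa)$ and then integrate the resulting identity over $(A,\kappa)\in SO(n)\times[0,1]$. Fix $A\in SO(n)$ and $\kappa\in[0,1]$. Since $\rho(A)$ and $D_{2^\kappa}$ act isometrically on $L^2$, pre- and post-composition by them preserves strong $L^2$-limits; applied to Corollary \ref{cor:2-2}, this gives the strong $L^2$ convergence
\[
\sum_{l=-L}^{L}\lim_{N\to\infty}\int_{Q_N}\rho(A^{-1})D_{2^{-\kappa}}M_{-\eta}A_{\eta,l}M_{\eta}D_{2^\kappa}\rho(A)f\,\frac{d\eta}{|Q_N|}
\longrightarrow
\rho(A^{-1})D_{2^{-\kappa}}{\cal F}^{-1}\bigl(M\cdot{\cal F}(D_{2^\kappa}\rho(A)f)\bigr)
\]
as $L\to\infty$. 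The intertwining relations ${\cal F}\rho(A)=\rho(A){\cal F}$ (valid for $A\in SO(n)$) and ${\cal F}D_\lambda=D_{1/\lambda}{\cal F}$ allow me to rewrite the right-hand side as the Fourier multiplier ${\cal F}^{-1}(M(2^{-\kappa}A\cdot)\cdot{\cal F}f)$.

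Next, I average over $(A,\kappa)$. Since $M$ is bounded, each multiplier symbol $M(2^{-\kappa}A\,\cdot)$ defines an $L^2$-bounded operator with norm $\lesssim 1$ uniformly in $(A,\kappa)$. Fubini on the frequency side then gives, for $f,g\in L^2$,
\[
\int_{SO(n)}\!\int_0^1\Bigl\langle{\cal F}^{-1}\bigl(M(2^{-\kappa}A\cdot)\hat f\bigr),g\Bigr\rangle\,d\kappa\,d\mu=\int_{\R^n}\Bigl(\int_{SO(n)}\!\int_0^1 M(2^{-\kappa}A\xi)\,d\kappa\,d\mu\Bigr)\hat f(\xi)\overline{\hat g(\xi)}\,d\xi.
\]
The identity $M(2\xi)=M(\xi)$, immediate from the series defining $M$, combined with the substitution $\kappa\mapsto 1-\kappa$, yields $\int_0^1 M(2^{-\kappa}A\xi)\,d\kappa=\int_0^1 M(2^{\kappa}A\xi)\,d\kappa$. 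The same periodicity together with the rotation-invariance of $\mu$ then forces the bracketed integral to be independent of $|\xi|$ and of $\xi/|\xi|$ for $\xi\ne 0$, hence equal to the constant $\alpha$.

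The main obstacle is keeping track of the three nested limits/integrals: the inner $N\to\infty$, the middle partial-sum $L\to\infty$, and the outer $(A,\kappa)$-average. The first two are handled by Corollary \ref{cor:2-2} itself; the outer average is made rigorous by the uniform operator bounds (via Lemma \ref{lem:080825-1} for the intermediate expressions and via $\|M\|_\infty<\infty$ for the limiting multiplier) together with the dominated convergence theorem on the Fourier side. The identity thus holds first in the weak operator topology, and then in the strong one by the same uniform bounds.
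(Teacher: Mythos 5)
The paper states Corollary \ref{cor:2-3} without an explicit proof, treating it as an immediate consequence of Corollary \ref{cor:2-2}, and your argument supplies precisely what that implicit reasoning requires: conjugate Corollary \ref{cor:2-2} by the unitaries $\rho(A^{-1})D_{2^{-\kappa}}$ and $D_{2^\kappa}\rho(A)$, identify the resulting multiplier as $M(2^{-\kappa}A\,\cdot)$ via the intertwining of ${\cal F}$ with rotations and dilations, and then average over $(A,\kappa)$ using the periodicity $M(2\xi)=M(\xi)$ together with bi-invariance of the Haar measure on $SO(n)$ to recognize the averaged symbol as the constant $\alpha$. Your proof is correct and follows the route the paper intends.
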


\begin{remark}
\begin{enumerate}
\item
In view of (\ref{eq:080829-1}),
$\alpha$ does not depend on $\xi$
appearing in the definition
of the formula defining $\alpha$.
\item
In \cite{PrTe}
Plamanik and Terwilleger considered
the average of
\[
\rho(A^{-1})D_{2^{-\kappa}}
T_{-y} M_{-\eta}A_{\eta,l} M_{\eta}T_y D_{2^\kappa}\rho(A).
\]
However, as our Corollary \ref{cor:2-3} shows,
there is no need to take average over the time space $\R^n_y$.
We shall take full advantage of this fact
in the course of the proof of Theorem \ref{thm:main}.
\end{enumerate}
\end{remark}

\subsection{Some reductions of Theorem \ref{thm:main}}
\noindent

Corollary \ref{cor:2-3} is the simplified phase decomposition formula,
which is beautiful of its own right.
However, in the present paper,
we discretize it.
More precisely,
we proceed as follows\,:
\begin{proposition}
\label{prop:2-4}
Let $\{A_n\}_{n \in \N}$ and $\{\kappa(n)\}_{n \in \N}$
be dense subsets of $SO(n)$ and $[0,1]$ respectively
such that $A_1=id_{\R^n}$ and $\kappa(1)=0$.
Then
\begin{equation}
\label{eq:080829-2}
m_K(\xi)
:=
\sum_{k_1,k_2=1}^K M(2^{\kappa(k_1)} A_{k_2}^{-1}\xi)
\end{equation}
satisifies the following conditions, provided $K$ is sufficiently large.
\begin{enumerate}
\item
$c_{\alpha,\beta}(m_K)<\infty$ for all $\alpha,\beta \in \N_0{}^n$.
\item
$\displaystyle \inf_{\xi \in \R^n \setminus \{0\}}m_K(\xi)>0$.
\end{enumerate}
\end{proposition}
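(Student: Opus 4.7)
The plan is to base everything on the dyadic scaling identity $M(2\xi) = M(\xi)$, which is immediate from (\ref{eq:080829-1}) and transfers to $m_K(2\xi) = m_K(\xi)$. Both parts of the conclusion then reduce to analysis on the compact annulus $\{\xi : 1 \le |\xi| \le 2\}$, a fundamental domain for the $\Z$-action by dyadic scaling.

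For part (1), since $m_K$ is independent of $x$, $c_{\alpha,\beta}(m_K)$ vanishes unless $\beta = 0$, so it suffices to bound $\sup_\xi |\xi|^{|\alpha|}|\partial^\alpha m_K(\xi)|$. I would first observe that $M \in C^\infty(\R^n \setminus \{0\})$: from the explicit formula for $m$ one has $\supp m \subset [-17/20,-3/20]^n$, a compact subset of $\R^n \setminus \{0\}$, so the series (\ref{eq:080829-1}) has only finitely many nonzero terms at each $\xi \ne 0$. Differentiating $M(2\xi) = M(\xi)$ yields $(\partial^\alpha M)(2\xi) = 2^{-|\alpha|}(\partial^\alpha M)(\xi)$, so $|\xi|^{|\alpha|}|\partial^\alpha M(\xi)|$ is $2$-invariant and hence bounded by its sup on the annulus, where $M$ is smooth. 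Each summand defining $m_K$ is $M$ composed with the linear map $T_{k_1,k_2}\xi := 2^{\kappa(k_1)}A_{k_2}^{-1}\xi$ of operator norm at most $2$, so the chain rule transfers the bound to $m_K$ with a constant depending only on $\alpha$ and $K$.

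For part (2), set $\Omega := \{\xi \ne 0 : M(\xi) > 0\}$; it is open, $2$-invariant, and non-empty since a direct computation using $\Phi \ge \chi_{Q(9/100)}$ yields $m(-\vec{\mathbf 1}/2) \ge |Q(9/100)| > 0$. The key geometric step is that for every $\xi_*$ in the annulus there exist $A^* \in SO(n)$ and $\kappa^* \in [0,1]$ with $2^{\kappa^*}(A^*)^{-1}\xi_* \in \Omega$: after fixing some $\eta_0 \in \Omega$ with $|\eta_0| \in [1,2]$ (possible by $2$-invariance), I can select $A^*$ carrying the direction of $\xi_*$ to that of $\eta_0$ and $\kappa^* \in [0,1]$ with $2^{\kappa^*}|\xi_*| = |\eta_0|$. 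Openness of $\Omega$ and joint continuity in $(\kappa,A,\xi)$ give a neighborhood of $(\kappa^*,A^*,\xi_*)$ on which this persists. Compactness of the annulus yields a finite subcover with target pairs $(\kappa_j^*,A_j^*)$ and tolerances $\delta_j > 0$, and density of $\{\kappa(k)\}$ in $[0,1]$ and of $\{A_k\}$ in $SO(n)$ then supplies some $K_0$ such that for every $K \ge K_0$ each target pair is approximated within $\delta_j$ by some $(\kappa(k_1),A_{k_2})$ with $k_1,k_2 \le K$. Consequently $m_K > 0$ on the annulus, and continuity on the compact annulus together with $2$-invariance upgrade this to $\inf_{\xi \ne 0} m_K(\xi) > 0$.

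The main obstacle I expect is the density-to-uniform-covering step: density of $\{A_k\}$ and $\{\kappa(k)\}$ is only a pointwise statement, while the conclusion requires a single $K$ serving every $\xi$ in the annulus simultaneously. Compactness of the fundamental domain combined with openness of $\Omega$ is the lever that makes this work, and the $2$-invariance of $M$ is what allows the reduction to a bounded set in the first place.
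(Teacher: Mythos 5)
The paper gives essentially no argument here --- its proof reads, in full, ``This is clear from the definition of $M$'' --- so your write-up is a genuine filling-in of omitted details rather than a parallel to anything in the text. The overall plan is correct: the dyadic scale invariance $M(2\xi)=M(\xi)$, inherited directly from (\ref{eq:080829-1}), is the right mechanism; the reduction to the compact annulus $\{1\le|\xi|\le 2\}$ is what makes both the derivative bounds and the lower bound tractable; and the compactness-plus-openness argument is exactly the right way to upgrade pointwise density of $\{A_k\}$ and $\{\kappa(k)\}$ to a single finite $K$ that works uniformly. The observation that $\supp m\subset[-17/20,-3/20]^n$ is a compact set avoiding the origin --- so that $\sum_l m(2^l\xi)$ is locally finite on $\R^n\setminus\{0\}$ and $M$ is $C^\infty$ there --- is precisely the point one needs to make part (1) rigorous; for part (2), noting $m(-\vec{\mathbf 1}/2)\ge|Q(9/100)|>0$ settles non-emptiness of $\Omega$.

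There is one small slip in the geometric step. For $\xi_*$ with $|\xi_*|\in[1,2]$ and $\eta_0\in\Omega$ with $|\eta_0|\in[1,2]$, the solution of $2^{\kappa^*}|\xi_*|=|\eta_0|$ lies a priori only in $[-1,1]$, not in $[0,1]$: taking $|\xi_*|=2$ and $|\eta_0|=1$ forces $\kappa^*=-1$. The fix is immediate from the $2$-invariance of $\Omega$ you have already established: target $2^{k}\eta_0\in\Omega$ for $k\in\{0,1\}$ chosen so that $\kappa^*=k+\log_2(|\eta_0|/|\xi_*|)\in[0,1]$ (take $k=1$ when $\log_2(|\eta_0|/|\xi_*|)<0$, and $k=0$ otherwise). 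With that one-line correction the argument closes and is, as far as I can tell, the argument the author intended readers to reconstruct.
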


\begin{proof}
This is clear from the definition of $M$.
\end{proof}

In view of this proposition,
we set $b(x,\xi):=a(x,\xi)/m_K(\xi)$.
Then we have
\begin{align*}
\lefteqn{
a(x,D)
}\\
&=
\sum_{k_1,k_2=1}^K
\sum_{l=-\infty}^\infty
\lim_{N \to \infty}
\int_{K_N}
b(x,D)\rho(A_{k_1}^{-1})D_{2^{-\kappa(k_2)}}
M_{-\eta}A_{\eta,l} M_{\eta}D_{2^{\kappa(k_2)}}\rho(A_{k_1})
\,\frac{d\eta}{|K_N|}.
\end{align*}
Since other summand can be dealt similarly,
let us consider the summand for $k_1=k_2=1$\,:
Below we shall deal with
\begin{align*}
\lefteqn{
\sum_{l=-\infty}^\infty
\lim_{N \to \infty}
\int_{K_N}
b(x,D)
M_{-\eta}A_{\eta,l} M_{\eta}\,\frac{d\eta}{|K_N|}
}\\
&=
\sum_{l=-\infty}^\infty
\lim_{N \to \infty}
\int_{K_N}
M_{-\eta}
b(x,D-\eta)A_{\eta,l} M_{\eta}\,\frac{d\eta}{|K_N|}.
\end{align*}
Recall that 
the main theorem concerns the conjugated modulation.
So, we are led to consider
\begin{align*}
\lefteqn{
\sum_{l=-\infty}^\infty
\lim_{N \to \infty}
\int_{K_N}
M_{-\xi-\eta}
b(x,D-\eta)A_{\eta,l} M_{\eta+\xi}\,\frac{d\eta}{|K_N|}
}\\
&=
\sum_{l=-\infty}^\infty
\lim_{N \to \infty}
\int_{K_N}
M_{-\eta}
b(x,D-\eta+\xi)A_{\eta-\xi,l}M_{\eta}\,\frac{d\eta}{|K_N|}.
\end{align*}
Here the equality holds
by virtue of Lemma \ref{lem:2-1}.

Define a norm by
\[
\| f \, : \, L^{2,\infty} \|^*
:=
\sup_{E}
|E|^{-\frac12}\int_E |f|.
\]
Here $E$ in $\sup$ runs 
over all the non-empty bounded measurable sets.
Then, the weak-$L^2$ quasi-norm 
is equivalent to this norm (see \cite{Grafakos}).
Furthermore, if $f$ is locally square integrable,
then we have
\begin{equation}
\label{eq:080815-1}
\| f \, : \, L^{2,\infty} \|^*
\simeq
\sup_{E}
|E|^{-\frac12}\left|\int_E f\right|.
\end{equation}
In view of Proposition \ref{prop:2-4}
the functions $a$ and $b$ enjoy the same property\,:
\[
c_{\alpha,\beta}(a) \simeq_{\alpha,\beta} c_{\alpha,\beta}(b)
\]
for all $\alpha,\beta \in \N_0{}^n$.
Hence, it is sufficient to show that
\[
\sup_{E}
|E|^{-\frac12}\int_E 
\sup_{\xi \in \Q^n}
\left|\sum_{l=-L}^L a(x,D-\xi)A_{\xi,l} f(x)\right|\,dx
\lesssim
\| f \|_2.
\]
Since there exists a measurable mapping
$N:\R^n \to \Q^n$
such that
\[
\sup_{\xi \in \Q^n}
\left|\sum_{l=-L}^L a(x,D-\xi)A_{\xi,l} f(x)\right|
\le 2
\left|\sum_{l=-L}^L a(x,D-\xi)A_{\xi,l} f(x)|_{\xi=N(x)}\right|,
\]
we have only to show
\[
\sup_{E}
|E|^{-\frac12}\left|\int_E 
\sum_{l=-L}^L a(x,D-\xi)A_{\xi,l} f(x)_{|\xi=N(x)}\,dx\right|
\lesssim
\| f \|_2.
\]
Taking into account
Lemma \ref{lem:080824-7} below,
we conclude that
\[
a(x,D-\xi)\sum_{l=-L}^L A_{\xi,l}f(x)
=
\lim_{M \to \infty}
\sum_{\substack{s \in \D\,:\, \omega_{s(2^n)} \ni \xi \\ 
2^{-L} \le \ell(I_s) \le 2^L, \,
|c(I_s)| \le M}}
\langle f,\varphi_s \rangle_{L^2}a(x,D-\xi)\varphi_s
\]
converges pointwise.
Hence, we have only to establish
that
\[
\sup_{\P \subset \D}
\left|
\sum_{s \in \P}
\langle f,\varphi_s \rangle_{L^2}
\int_{\R^n}
\chi_{N^{-1}[\omega_{s(2^n)}] \cap E}(x)
a(x,D-\xi)
\varphi_s(x)_{|\xi=N(x)}\,dx\right|
\lesssim
|E|^{\frac12}\| f \|_2,
\]
where $\P \subset \D$ runs over any finite set.
Finally by scaling we can assume
that $|E| \le 1$.
We refer to \cite[p780]{PrTe} for more details of this dilation technique.

With this in mind,
we are going to prove the following
in Section \ref{section:proofs}.
\begin{theorem}[Basic estimate]
\label{thm:main2}
Let $N:\R^n \to \R^n$ be a measurable mapping
and $E$ a bounded measurable subset whose volume is less than $1$.
Then we have
\[
\sum_{s \in \D}
\left|
\langle f,\varphi_s \rangle_{L^2}
\int_{\R^n}
\chi_{N^{-1}[\omega_{s(2^n)}] \cap E}(x)
a(x,D-\xi)
\varphi_s(x)_{|\xi=N(x)}\,dx\right|
\lesssim
\| f \|_2.
\]
\end{theorem}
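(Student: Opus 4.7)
The plan is to follow the Pramanik--Terwilleger strategy \cite{PrTe}: decompose the relevant tiles into countably many unions of trees indexed by dyadic scales of ``energy'' and ``mass'', apply the Cotlar-type per-tree estimate of Section \ref{section:Cotlar}, and sum the resulting geometric series. The left-hand side of Theorem \ref{thm:main2} is precisely a Carleson-type tile sum twisted by the symbol $a(x,D-\xi)$ and the measurable linearization $\xi=N(x)$, so the scheme from time-frequency proofs of Carleson's theorem applies, with the symbol $a$ playing the role of a smooth molecular adapter.

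To this end, I would first reduce to a finite subcollection $\P\subset\D$ and associate to it two numerical quantities. The \emph{energy} of $\P$ is a Carleson-type $\ell^2$-functional built from the coefficients $\langle f,\varphi_s\rangle_{L^2}$ and maximized over sub-trees of $\P$; Lemma \ref{lem:080827-1} shows that it is $\lesssim\|f\|_2$. The \emph{mass} of $\P$ is a supremum of the density
\[
\frac{|\{x\in E\,:\,N(x)\in\omega_{s(2^n)}\}|}{|I_s|}
\]
over $s\in\P$, taken with appropriate polynomial-weighted enlargements of $I_s$; since $|E|\le 1$, this quantity is $\lesssim 1$. These two a priori bounds drive the extraction.

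Next, I would run the standard greedy algorithm. At level $k\ge 0$, extract a family $\cF_k$ of trees whose energy is at scale $2^{-k}\|f\|_2$ or whose mass is at scale $2^{-k}$, together with the attendant Carleson counting bound on $\sum_{(\T,t)\in\cF_k}|I_\T|$. The uniform $L^2$-boundedness of $A_{\xi,\P}$ from Lemma \ref{lem:080825-1} ensures that after each extraction the energy of the remainder halves, so that only finitely many levels contribute on any finite $\P$. The Cotlar-type per-tree estimate proved in Section \ref{section:Cotlar} then furnishes, for each $(\T,t)\in\cF_k$, a geometric gain of the form $2^{-\alpha k}|I_\T|$ times appropriate fixed factors involving $f$ and $E$, for some $\alpha>0$; combining with the counting bound on $\cF_k$ and summing in $k$ produces the required estimate $\|f\|_2$.

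The main obstacle is the single-tree Cotlar estimate itself. In \cite{PrTe} the operator appearing inside the tree sum is a pure Fourier multiplier, and $x$ and $\xi$ decouple cleanly. Here $a(x,\xi)$ is an $x$-dependent symbol obeying (\ref{eq:c-alpha}) and (\ref{eq:moment}), so one must exploit the kernel bounds from Section \ref{section:preliminaries}, the truncated singular integral maximal inequality (\ref{eq:080823-3}), and the cancellation $a(x,D)^\sharp 1=0$ in order to absorb the $x$-dependence. The simplified phase decomposition of Corollary \ref{cor:2-3}, which avoids any averaging over the time variable, is exactly what makes this feasible: after peeling off the modulation $M_{-\xi}$, each $a(x,D-\xi)\varphi_s$ can be treated as a smooth molecule adapted to the tile $s$, and the classical Cotlar argument on the model operator $A_{\xi,l}$ extends to the full pseudo-differential setting.
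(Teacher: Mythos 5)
Your outline is essentially the paper's own strategy: the Pramanik--Terwilleger scheme of decomposing a finite $\P\subset\D$ into trees graded by ``size'' (your energy) and ``dense'' (your mass), applying a per-tree estimate, and summing the resulting geometric series via the count bound. The density/size decomposition you describe is exactly Corollary~\ref{cor:density-size}, and the closing summation in $j$ is how the paper derives (\ref{eq:080816-1}).

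One correction and one under-specified step are worth flagging. First, the per-tree estimate ${\rm Sum}(\T)\lesssim{\rm Dense}(\T)\,{\rm Size}(\T)\,|I_t|$ is Theorem~\ref{thm:main3}, proved in Section~\ref{section:proofs}; Section~\ref{section:Cotlar} proves only the \emph{pointwise} Cotlar inequality (Proposition~\ref{prop:Cotlar}). Second, to pass from that pointwise inequality to the per-tree bound one needs an additional organizational device that your sketch omits: the partition $\mathcal{J}(\T)$ of $\R^n$ into maximal dyadic cubes $J$ such that no $I_s$, $s\in\T$, fits inside $3J$ (Lemma~\ref{lem:3-2}). The tree sum is then split according to whether $|I_s|\le 2^n|J|$ (handled by the molecular decay of $\psi^\xi_s$, Lemma~\ref{lem:080824-7}) or $|I_s|>2^n|J|$. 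Only for the large tiles is Proposition~\ref{prop:Cotlar} invoked, and even there it must be combined with the density bound $|J\cap\bigcup E_{s(2^n)}|\lesssim{\rm Dense}(\T)|J|$ and the $L^2$ control of $F_1=\sum\alpha_s\langle f,\varphi_s\rangle\varphi_s$ from Lemma~\ref{lem:080824-6}, so that the local averages of $M F_1$ and of the maximal truncated operator can be integrated and then summed over $J$ by Cauchy--Schwarz. Your phrase ``a geometric gain of the form $2^{-\alpha k}|I_\T|$'' is the right shape, but without the $\mathcal{J}(\T)$-partition machinery the transition from Proposition~\ref{prop:Cotlar} to that per-tree gain is not justified; this is the one place where a genuine argument, not just an appeal to the Cotlar lemma, is needed.
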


Below in the present paper
we fix a measurable mapping $N:\R^n \to \R^n$
and a bounded measurable set $E$ with volume less than $1$.
To simplify notations,
we define
$E_{s(2^n)}:=N^{-1}[\omega_{s(2^n)}] \cap E$
and
\[
\psi_s^\xi(x):=a(x,D-\xi)\varphi_s(x), \,
\psi_s^{N(\cdot)}(x):=\psi_s^\xi(x)|_{\xi=N(x)}.
\]
As for $\psi_s^\xi$,
we have the following pointwise estimate.
\begin{lemma}
\label{lem:080824-7}
$\displaystyle
|\psi_s^\xi(x)|
\lesssim_L
|I_s|^{-\frac{1}{2}}
\left(1+\frac{|x-c(I_s)|}{\ell(I_s)}\right)^{-L}
$
for all $L \in \N$.
\end{lemma}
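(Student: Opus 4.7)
The plan is to reduce the problem to estimating an oscillatory integral over a fixed unit cube, and then exploit integration by parts to extract the decay in $u:=(x-c(I_s))/\ell(I_s)$. The starting point is the factorization $a(x,D-\xi)=M_\xi\circ a(x,D)\circ M_{-\xi}$, which follows by a direct computation from the Fourier representation of $a(x,D)$. Writing $\psi_s^\xi(x)=e^{2\pi i x\cdot\xi}\,a(x,D)(M_{-\xi}\varphi_s)(x)$, expanding the right-hand side via the explicit formula (\ref{eq:080823-2}) for ${\cal F}\varphi_s$, and making the change of variables $\zeta:=\ell(I_s)(\eta-c(\omega_{s(1)}))$ in the resulting $\eta$-integral, one arrives at
\[
|\psi_s^\xi(x)|=\ell(I_s)^{-n/2}\left|\int_{Q(0,1/10)} a\!\left(x,\,c(\omega_{s(1)})-\xi+\zeta/\ell(I_s)\right)\Phi(\zeta)\,e^{2\pi i u\cdot\zeta}\,d\zeta\right|,
\]
in which the prefactor is already $|I_s|^{-1/2}$.

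I would then treat $|u|\le 1$ and $|u|\ge 1$ separately. For $|u|\le 1$, the trivial bound $|a|\le\|a\|_\infty$ combined with the finite measure of $\supp\,\Phi$ yields $|\psi_s^\xi(x)|\lesssim|I_s|^{-1/2}$, which matches the desired bound since $(1+|u|)^{-L}\gtrsim 2^{-L}$ in this range. For $|u|\ge 1$, I would integrate by parts $L$ times in $\zeta$ via the identity $u^\alpha e^{2\pi i u\cdot\zeta}=(2\pi i)^{-|\alpha|}\partial_\zeta^\alpha e^{2\pi i u\cdot\zeta}$, the compact support of $\Phi$ killing all boundary terms. Each derivative distributes by Leibniz across the product $a(x,\cdot)\,\Phi(\zeta)$: the $\Phi$-derivatives are uniformly bounded, while those on the symbol are estimated via (\ref{eq:c-alpha}) and the chain rule by $\ell(I_s)^{-|\beta|}\,|c(\omega_{s(1)})-\xi+\zeta/\ell(I_s)|^{-|\beta|}$.

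The main technical obstacle lies in this last bound, which has a potential singularity at $\eta=\xi$. In the setting where the lemma is actually used---namely inside the proof of Theorem~\ref{thm:main2}, where the indicator $\chi_{N^{-1}[\omega_{s(2^n)}]\cap E}$ forces $\xi\in\omega_{s(2^n)}$---this obstacle evaporates, since $\omega_{s(1)}$ and $\omega_{s(2^n)}$ are the two lexicographically extreme sub-cubes of $\omega_s$, whose centers are separated by $\sqrt n\,\ell(\omega_s)/2$, yielding $|\xi-c(\omega_{s(1)})|\gtrsim\ell(\omega_s)=\ell(I_s)^{-1}$. Because $\zeta/\ell(I_s)\in Q(0,1/(10\ell(I_s)))$ is a perturbation smaller than this scale, the argument of $a$ stays comparable to $\ell(\omega_s)$ on the entire $\zeta$-domain, and the symbol-derivative bound collapses to the uniform estimate $\ell(I_s)^{-|\beta|}\cdot\ell(I_s)^{|\beta|}=1$. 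The $L$-fold IBP then produces a constant depending only on $L$, $n$ and the $c_{\alpha,\beta}(a)$'s, and dividing by $|u|^L$ and combining with the $|u|\le 1$ estimate gives the claimed $|I_s|^{-1/2}(1+|u|)^{-L}$ bound.
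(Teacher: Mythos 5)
The paper states this lemma without any proof, so there is no internal argument to compare against; on its own merits your derivation is sound and is surely the intended one: pull the modulation through via $a(x,D-\xi)=M_\xi\, a(x,D)\,M_{-\xi}$, insert the explicit formula (\ref{eq:080823-2}) for ${\cal F}\varphi_s$, rescale so that the integral runs over $\supp\Phi\subset Q(0,1/10)$, and integrate by parts $L$ times against $e^{2\pi i u\cdot\zeta}$ with $u=(x-c(I_s))/\ell(I_s)$, using (\ref{eq:c-alpha}) to control the Leibniz terms hitting the symbol. The computation producing the prefactor $|I_s|^{-1/2}$ and the reduction to the two regimes $|u|\le 1$, $|u|\ge 1$ is correct.

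Your most useful observation is that the lemma, as literally stated for arbitrary $\xi\in\R^n$, is not reachable by this (or, it would seem, any) route: each $\zeta$-derivative landing on the symbol contributes a factor $\ell(I_s)^{-|\beta|}\,|c(\omega_{s(1)})-\xi+\zeta/\ell(I_s)|^{-|\beta|}$, which blows up as $\xi\to c(\omega_{s(1)})$. Indeed, for a symbol such as $a(\xi)=\chi(\xi)|\xi|^{i}$ (which satisfies (\ref{eq:c-alpha}) but whose oscillation is discontinuous at the origin), the decay of $\psi_s^\xi$ for $\xi$ near $c(\omega_{s(1)})$ is limited to roughly $|u|^{-n}$ and cannot be arbitrarily fast. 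You correctly noted that wherever the paper invokes the lemma---the pointwise convergence discussion preceding Theorem~\ref{thm:main2}, and the estimates for (\ref{eq:080816-5}) and $F_{2,J}$---the evaluation point $\xi=N(x)$ is constrained to lie in $\omega_{s(2^n)}$, which sits at distance $\ge\sqrt n\,\ell(\omega_s)/4$ from $c(\omega_{s(1)})$ in each coordinate; since $\supp\Phi\subset Q(0,1/10)$ forces $|\zeta/\ell(I_s)|\le\ell(\omega_s)/10$ coordinatewise, the argument of $a$ stays comparable to $\ell(\omega_s)$ on the whole domain and the symbol-derivative factors collapse to $O(1)$, exactly as you say. (Minor point: $\sqrt n\,\ell(\omega_s)/2$ is the center-to-center distance of $\omega_{s(1)}$ and $\omega_{s(2^n)}$; the relevant lower bound on $|\xi-c(\omega_{s(1)})|$ over $\xi\in\omega_{s(2^n)}$ is $\sqrt n\,\ell(\omega_s)/4$, but the $\gtrsim\ell(\omega_s)$ conclusion and the final estimate stand.) So your proof is correct under the hypothesis $\xi\in\omega_{s(2^n)}$, a restriction the lemma's statement really ought to carry and which is satisfied in every application.
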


Following the notation in \cite{Lacey-Terwilleger},
we define
\[
{\rm Sum}(\P):=
\sum_{s \in \P}
|\langle f,\varphi_s \rangle_{L^2}|
\cdot
|\langle \psi_s^{N(\cdot)},\chi_{E_{s(2^n)}}\rangle_{L^2}|
\]
for $\P \subset \D$.
Finally
we shall establish
\begin{equation}
\label{eq:080816-1}
{\rm Sum}(\P)
\lesssim
\| f \|_2
\end{equation}
for any finite subset $\P$ 
instead of proving Theorem \ref{thm:main2}
directly.

\section{Cotlar type estimate}
\label{section:Cotlar}

In this section we obtain a Cotlar type estimate.
We let
\begin{align*}
a_{\eta,\tau,\ell}(x,\xi)
&:=
a(x,\xi-\eta)
\Phi\left(\frac{\xi-\tau}{6\ell}\right), \,
a_{s,\eta}(x,\xi):=a_{\eta,c(\omega_s),\ell(\omega_s)}(x,\xi)
\end{align*}
for $\ell>0$, $\eta,\tau \in \R^n$ and $s \in \D$.
To formulate our result,
we use the maximal operator $M_{\ge b}$ by
\[
M_{\ge b}f(x)
:=
\sup_{r \ge b}
\frac{1}{r^n}\int_{Q(x,r)}|f(y)|\,dy
=
\sup_{r \ge b}
\frac{1}{r^n}\int_{Q(r)}|f(x+y)|\,dy
\]
for $b>0$.
We prove the following estimate.
\begin{proposition}
\label{prop:Cotlar}
Let $u,v \in \D$ with $u \le v$.
Suppose that $y \in \R^n$ and $\eta_0,\eta_1 \in \omega_v$.
Then we have
\begin{align*}
\lefteqn{
|a_{v,\eta_0}(x,D)f(y)-a_{u,\eta_0}(x,D)f(y)|
}\\
&\lesssim
\inf_{z \in Q(y,\ell(I_u))}
\left(
M_{\ge \ell(I_u)}f(z)
+
\sup_{\varepsilon>0}
\left|a(x,D-\eta_1)[\chi_{\R^n \setminus Q(z,\varepsilon)}f](z)\right|
\right).
\end{align*}
\end{proposition}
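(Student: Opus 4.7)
\noindent\emph{Proof plan.} The plan is to mimic the classical Cotlar inequality in this frequency-truncated setting, the crucial new input being a cancellation of the difference symbol at the modulation $\eta_0$.

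First, I would factor both operators as compositions of commuting Fourier multipliers, treating $x$ as a parameter. With $T_x := a(x, D-\eta_0)$ for the ``full'' multiplier and $K_s := {\cal F}^{-1}\Phi_{6\omega_s}$ for $s \in \{u,v\}$, each $K_s$ being a Schwartz mollifier at physical scale $\ell(I_s)$ modulated by $c(\omega_s)$, one gets $a_{s,\eta_0}(x,D)f = K_s * T_x f$. The difference becomes
\[
[a_{v,\eta_0}(x,D) - a_{u,\eta_0}(x,D)]f(y) = \int (K_v - K_u)(y - w)\, T_x f(w)\, dw.
\]
The pivotal observation is that, by (\ref{eq:080823-1}), both $\Phi_{6\omega_v}(\eta_0)$ and $\Phi_{6\omega_u}(\eta_0)$ equal $1$ since $\eta_0 \in \omega_v \subset \tfrac{27}{25}\omega_v \subset \tfrac{27}{25}\omega_u$; thus $\widehat{K_v - K_u}(\eta_0) = 0$.

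Next, I would write $T_x f(w) = e^{2\pi i w \eta_0}\, g(w)$ with $g(w) := a(x,D)[M_{-\eta_0}f](w)$ and use this cancellation to insert a free constant $g(z)$ for any chosen $z \in Q(y, \ell(I_u))$:
\[
[a_{v,\eta_0}(x,D) - a_{u,\eta_0}(x,D)]f(y) = \int (K_v - K_u)(y - w)\, e^{2\pi i w \eta_0}\, [g(w) - g(z)]\, dw.
\]
Setting $\varepsilon := C \ell(I_u)$, I would then split $f = f\chi_{Q(z,\varepsilon)} + f\chi_{\R^n \setminus Q(z,\varepsilon)}$ and, correspondingly, $g = g_0 + g_\infty$. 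The value $g_\infty(z) = e^{-2\pi i z \eta_0}\, a(x, D - \eta_0)[\chi_{\R^n \setminus Q(z,\varepsilon)}f](z)$ produces the truncated singular integral in the conclusion, after switching $\eta_0 \to \eta_1$ by a routine modulation argument using $|\eta_0 - \eta_1| \le \ell(\omega_v) \le 1/\ell(I_u)$, with error of size $M_{\ge \ell(I_u)}f(z)$. The remaining contributions -- the differences $g_\infty(w) - g_\infty(z)$ and $g_0(w) - g_0(z)$ integrated against $(K_v - K_u)(y - w)$ -- I would bound by $M_{\ge \ell(I_u)}f(z)$, using the CZ-regularity of $k(x, \cdot)$ from Section~\ref{section:preliminaries} together with the size and decay of the mollifiers $K_s$ and the $L^2$-boundedness of $a(x,D)$.

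The main technical obstacle is that $K_v$ and $K_u$ live at different physical scales $\ell(I_v) \ge \ell(I_u)$, so $K_v - K_u$ is not itself a single-scale mollifier; naive pointwise size estimates would spuriously introduce the larger scale $\ell(I_v)$ into the final bound. The cancellation at $\eta_0$ above is exactly what forces the effective scale down to $\ell(I_u)$, matching the conclusion's independence from $v$.
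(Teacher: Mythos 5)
Your convolution factorization $a_{s,\eta_0}(x,D)f = K_s * T_x f$ with $K_s = {\cal F}^{-1}\Phi_{6\omega_s}$, and the observation that $\widehat{K_v-K_u}(\eta_0)=0$ because $\eta_0\in\omega_v\subset\omega_u$ makes both cutoffs equal to $1$, is correct and captures the right structural cancellation; the paper exploits the same fact, albeit written kernel-by-kernel rather than via a convolution factorization. However, two of your steps conceal genuine difficulties that the paper's sequence of lemmas is specifically designed to handle, and as written they do not go through.

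First, the ``routine modulation argument'' replacing $a(x,D-\eta_0)[\chi_{\R^n\setminus Q(z,\varepsilon)}f](z)$ by $a(x,D-\eta_1)[\chi_{\R^n\setminus Q(z,\varepsilon)}f](z)$ does not converge on the untruncated singular integral. The kernel of the difference is $k(x,z-z^*)\bigl(e^{2\pi i\eta_0\cdot(z-z^*)}-e^{2\pi i\eta_1\cdot(z-z^*)}\bigr)$; the oscillatory gain $|\eta_0-\eta_1|\,|z-z^*|$ beats the $|z-z^*|^{-n}$ decay only up to $|z-z^*|\lesssim 1/|\eta_0-\eta_1|\simeq\ell(I_v)$, and beyond that the integrand is merely $O(|z-z^*|^{-n}|f(z^*)|)$, which is not controlled by $M_{\ge\ell(I_u)}f$. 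The paper avoids this: Lemma~\ref{lem:Cotlar8} and Corollary~\ref{cor:Cotlar8} perform the $\eta_0\to\eta_1$ switch \emph{before} any near/far split, and, crucially, on the four-term combination $a_{u,\eta_0}-a_{v,\eta_0}-a_{u,\eta_1}+a_{v,\eta_1}$, so that the $u$-versus-$v$ frequency cutoffs provide the extra decay $(2^j\ell(\omega_u))^{n-2L}$ that makes the kernel sum convergent; the oscillatory factor is used only on the range where it actually helps. You would need an analogue of this pairing, not an independent modulation swap applied to the uncut maximal operator.

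Second, after inserting $g(z)$ you say the remaining term $\int(K_v-K_u)(y-w)e^{2\pi i w\eta_0}\bigl(g(w)-g(z)\bigr)\,dw$ ``I would bound by $M_{\ge\ell(I_u)}f(z)$ using the CZ-regularity of $k$,'' while simultaneously claiming that the constant $g_\infty(z)$ reappears as the truncated singular integral in the conclusion --- but once $g(z)$ is subtracted the constant term has vanished by the very cancellation you invoked, so the two statements are inconsistent. More to the point, the estimate of the remainder is precisely where the content of the paper's proof lives: $K_v-K_u$ is genuinely multi-scale (its tail decays only at scale $\ell(I_v)$), the modulus-of-continuity bound $|g_\infty(w)-g_\infty(z)|\lesssim |w-z|\,\varepsilon^{-1}M_{\ge\varepsilon}f(z)$ is only valid for $|w-z|\lesssim\varepsilon$, and the near piece $g_0(w)-g_0(z)$ has no usable pointwise control because $a(x,D)$ applied to a rough local bump is singular. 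You have named the obstacle (``naive pointwise size estimates would spuriously introduce $\ell(I_v)$'') but not resolved it. The paper resolves it by splitting $f$ (not $g$) into near/far pieces first (Lemma~\ref{lem:Cotlar1}), bounding the near piece by an $L^1$ estimate on the cutoff multiplier without ever invoking pointwise regularity of $a(x,D)f_0$, and then running a telescoping-in-scale decomposition on the far piece (Lemmas~\ref{lem:Cotlar3}, \ref{lem:Cotlar4}, \ref{lem:Cotlar5}) that ends with the $\sup_\varepsilon$ truncated singular integral rather than producing it by subtraction. So: right spirit, genuinely different execution, and the two steps you marked as routine are in fact the crux.
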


\subsection{Maximal operator $M_{\ge b}$}
\noindent

In the present section
we frequently use the following estimates.
\begin{lemma}
\begin{enumerate}
\item
Let $a>0$ and $L>n$.
Then we have
\begin{equation}
\label{eq:080824-1}
\int_{\R^n \setminus Q(x,a)}
\frac{a^{L-n}|f(y)|}{|x-y|^L}\,dy
\lesssim_L
M_{\ge a}f(x).
\end{equation}
\item
Let $b>a>0$.
Then we have
\begin{equation}
\label{eq:080824-2}
\int_{Q(x,b) \setminus Q(x,a)}
\frac{|f(y)|}{b\,|x-y|^{n-1}}\,dy
\lesssim
M_{\ge a}f(x).
\end{equation}
\end{enumerate}
\end{lemma}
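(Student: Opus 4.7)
The plan is to prove both estimates by a single idea: standard dyadic annular decomposition in the $\ell^\infty$-metric, combined with the trivial but crucial observation that for any $r \ge a$ the definition of $M_{\ge a}$ gives $\int_{Q(x,r)}|f(y)|\,dy \le r^n\,M_{\ge a}f(x)$. Once this is in place, everything reduces to book-keeping of how the kernel decays on successive annuli.

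For (\ref{eq:080824-1}) I would set $A_k := Q(x,2^{k+1}a)\setminus Q(x,2^k a)$ for $k\ge 0$, which partitions $\R^n\setminus Q(x,a)$. On $A_k$ one has $|x-y|\gtrsim 2^k a$ (the cubes are defined by the $\ell^\infty$-norm, and passage to the Euclidean norm in the integrand costs only a dimensional constant), hence
\[
\int_{A_k}\frac{a^{L-n}|f(y)|}{|x-y|^L}\,dy
\lesssim \frac{a^{L-n}}{(2^k a)^L}\,(2^{k+1}a)^n\,M_{\ge a}f(x)
=2^n\cdot 2^{k(n-L)}\,M_{\ge a}f(x),
\]
and summing in $k\ge 0$ converges precisely because $L>n$.

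The argument for (\ref{eq:080824-2}) runs in parallel, except that now one only needs to cover $Q(x,b)\setminus Q(x,a)$, so the sum is truncated at the largest integer $k_0$ with $2^{k_0}a\le b$. For each $0\le k\le k_0$ the same decomposition gives
\[
\int_{A_k}\frac{|f(y)|}{b\,|x-y|^{n-1}}\,dy
\lesssim \frac{(2^{k+1}a)^n}{b\,(2^k a)^{n-1}}\,M_{\ge a}f(x)
=\frac{2^n\cdot 2^k a}{b}\,M_{\ge a}f(x),
\]
and the geometric sum $\sum_{k=0}^{k_0}2^k a/b$ is dominated by $2^{k_0+1}a/b\lesssim 1$ by the definition of $k_0$.

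There is no genuine obstacle. The only point deserving mild care is the mismatch between the Euclidean norm appearing in the integrand and the $\ell^\infty$-norm defining the cubes $Q(x,r)$; this only affects implicit constants. Conceptually both inequalities are the standard Kolmogorov-type bounds for fractional kernels applied to dyadic annular pieces, restricted here to scales $\ge a$ so that the maximal operator $M_{\ge a}$ (rather than the full Hardy--Littlewood maximal) suffices.
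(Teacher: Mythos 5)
Your proposal is correct and follows essentially the same route the paper indicates: a dyadic annular decomposition combined with the bound $\int_{Q(x,r)}|f|\le r^n M_{\ge a}f(x)$ valid for every $r\ge a$, with convergence in (1) coming from $L>n$ and in (2) from the truncation at scale $b$. The only cosmetic difference is that the paper indexes the annuli for the second estimate inward from $b$ (after rounding $a$ down to $2^{-l}b$), whereas you index outward from $a$ and truncate; both yield the same geometric sum.
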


\begin{proof}
For the proof of (\ref{eq:080824-2}),
we may assume that $a=2^{-l}b$ for some $l \in \N$
by replacing $a$ with a number slightly less than $a$.
Both cases can be proved easily by decomposing
\[
\int_{\R^n \setminus Q(x,a)}
=
\sum_{j=1}^\infty
\int_{Q(x,2^j a) \setminus Q(x,2^{j-1}a)},
\int_{Q(x,b) \setminus Q(x,a)}
=
\sum_{j=1}^l
\int_{Q(x,2^{1-j}b) \setminus Q(x,2^{-j}b)}.
\]
Using this decomposition,
we can prove (\ref{eq:080824-1}) and (\ref{eq:080824-2})
easily.
We omit the further details.
\end{proof}

\begin{lemma}
\label{lem:Cotlar1}
Let $a,b>0$, $s \in \D$ and $y,y^*,\eta,\tau \in \R^n$.
Then we have
\begin{align}
\label{eq:080824-3}
|a_{\eta,\tau,a\,\ell(I_s)}(x,D)[\chi_{Q(y,\ell(I_s))}f](y^*)|
\lesssim_{a,b}
M_{\ge \ell(I_s)}f(y),
\end{align}
and
\begin{align}
\label{eq:080824-4}
|(a(x,D-\eta)-a_{\eta,\tau,\ell(I_s)}(x,D))
[\chi_{\R^n \setminus Q(y,\ell(I_s))}f](y)|
\lesssim
M_{\ge \ell(I_s)}f(y),
\end{align}
whenever $|y-y^*| \lesssim_b \ell(I_s)$.
\end{lemma}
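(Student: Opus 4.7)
The plan is to prove both estimates by pointwise analysis of the integral kernel
\[
K_{\eta,\tau,\ell}(x,w) := \int_{\R^n} a(x,\xi-\eta)\,\Phi\!\left(\frac{\xi-\tau}{6\ell}\right)e^{2\pi i w\cdot\xi}\,d\xi
\]
of $a_{\eta,\tau,\ell}(x,D)$. After substituting $\xi=\tau+6\ell u$ the kernel becomes an oscillatory integral over the compact support of $\Phi$, and integrating by parts in $u$—using the homogeneous symbol bounds $|\partial^\alpha_\xi a(x,\xi)|\lesssim|\xi|^{-|\alpha|}$—yields the decay $|K_{\eta,\tau,\ell}(x,w)|\lesssim_L\ell^n(1+\ell|w|)^{-L}$ for suitable $L>0$.

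For (\ref{eq:080824-3}), take $\ell=a\,\ell(I_s)$. The geometric hypothesis $|y-y^*|\lesssim_b\ell(I_s)$ combined with $z\in Q(y,\ell(I_s))$ gives $|y^*-z|\lesssim_b\ell(I_s)$, so the decay factor $(1+\ell|y^*-z|)^{-L}$ is bounded by a constant depending only on $a$ and $b$. Thus $|K_{\eta,\tau,a\ell(I_s)}(x,y^*-z)|\lesssim_{a,b}\ell^n$, and integrating against $|f|$ over $Q(y,\ell(I_s))$ and recognizing the result via the definition of $M_{\ge\ell(I_s)}f(y)$ yields the first inequality.

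For (\ref{eq:080824-4}), I would split the difference symbol dyadically by
\[
1-\Phi\!\left(\frac{\xi-\tau}{6\ell(I_s)}\right)=\sum_{j\ge1}\Psi\!\left(\frac{\xi-\tau}{6\cdot 2^j\ell(I_s)}\right),
\]
writing the difference operator as $\sum_{j\ge 1}R_j$ with each $R_j$ frequency-localized in an annulus of scale $2^j\ell(I_s)$ around $\tau$. Because $\Psi$ is supported in a compact annulus bounded away from the origin, the IBP estimate now delivers arbitrary decay $|R_j(x,w)|\lesssim_L(2^j\ell(I_s))^n(1+2^j\ell(I_s)|w|)^{-L}$. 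On $\R^n\setminus Q(y,\ell(I_s))$, applying (\ref{eq:080824-1}) with $L$ large produces $\int|R_j(x,y-z)||f(z)|\,dz\lesssim 2^{-j\delta}M_{\ge\ell(I_s)}f(y)$ for some $\delta>0$, and geometric summation in $j$ yields the second inequality.

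The main technical obstacle is the symbol singularity at $\xi=\eta$: each integration by parts produces a factor $|\tau-\eta+6\ell u|^{-L}$ which, when the frequency support of the cutoff straddles $\xi=\eta$ (i.e.\ when $|\tau-\eta|\lesssim\ell$), is only integrable over $u$ for $L<n$. This does not affect (\ref{eq:080824-3}), where no decay beyond the trivial bound is used. For (\ref{eq:080824-4}) only the finitely many "resonant" dyadic scales $2^j\ell(I_s)\sim|\tau-\eta|$ are affected, and these admit a separate, cruder treatment via the trivial kernel bound $|R_j|\lesssim(2^j\ell(I_s))^n$; the contribution is still summable thanks to the localization $|y-z|\ge\ell(I_s)$.
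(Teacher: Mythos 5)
Your overall strategy is the same as the paper's: for \eqref{eq:080824-3} a trivial (absolute-value) kernel bound suffices, and for \eqref{eq:080824-4} you split $1-\Phi$ dyadically into $\Psi$ pieces, bound each kernel by integration by parts, and sum using \eqref{eq:080824-1}. That is exactly what the author does. However, there are two points worth flagging.

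\textbf{The scale of the frequency cutoff.} You take the third parameter to be $\ell=a\,\ell(I_s)$, as literally written in the statement, and then assert that $(1+\ell|y^*-z|)^{-L}$ is bounded by a constant depending only on $a$ and $b$ because $|y^*-z|\lesssim_b\ell(I_s)$. That is false: $\ell|y^*-z|\lesssim_{a,b}\ell(I_s)^2$, which is unbounded over tiles $s$. Worse, the trivial kernel bound $\ell^n=a^n\ell(I_s)^n$ integrated against $|f|\chi_{Q(y,\ell(I_s))}$ gives $a^n\ell(I_s)^{2n}M_{\ge\ell(I_s)}f(y)$, not $M_{\ge\ell(I_s)}f(y)$; and in \eqref{eq:080824-4} the same mismatch makes $2^j\ell(I_s)|w|$ fail to be $\gtrsim 2^j$ on $|w|\ge\ell(I_s)$, wrecking the geometric summation in $j$. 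This is actually a typo in the \emph{statement}: the intended cutoff scale is $a\,\ell(\omega_s)=a/\ell(I_s)$ (compare the definition $a_{s,\eta}=a_{\eta,c(\omega_s),\ell(\omega_s)}$, and note the paper's own proof writes $\Phi\bigl(\frac{\xi-c(\omega_u)}{6a\,\ell(\omega_u)}\bigr)$). With $\ell=a\,\ell(\omega_s)$ your argument goes through: $\ell|y^*-z|\lesssim_{a,b}1$, the trivial kernel bound is $\lesssim_a\ell(I_s)^{-n}$, and for \eqref{eq:080824-4} on $|w|\ge\ell(I_s)$ one has $2^j\ell|w|\ge 2^j$, giving $|R_j(x,w)|\lesssim_L 2^{j(n-L)}\ell(I_s)^{L-n}|w|^{-L}$ and, via \eqref{eq:080824-1}, a contribution $\lesssim 2^{j(n-L)}M_{\ge\ell(I_s)}f(y)$ that sums geometrically. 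You should redo the computation with the corrected scale so the exponents actually close.

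\textbf{The singularity of the symbol at $\xi=\eta$.} You are right that each integration by parts hits the symbol derivative $\partial^\alpha_\xi a(x,\xi-\eta)\lesssim|\xi-\eta|^{-|\alpha|}$, and that a $\Psi$-annulus straddling $\eta$ is problematic; the paper's proof of \eqref{eq:080824-4} silently assumes this does not happen. However, your proposed repair---treat the finitely many resonant $j$'s by the trivial bound $|R_j|\lesssim(2^j\ell)^n$---does not close the gap: on $\R^n\setminus Q(y,\ell(I_s))$ there is no support restriction on $f\in L^2_{\rm loc}$, so a kernel bound without spatial decay cannot be integrated against $|f|$ to produce $M_{\ge\ell(I_s)}f(y)$. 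One needs at least $|k_j(x,z)|\lesssim\ell(I_s)^{\delta}|z|^{-n-\delta}$ for the resonant $j$'s; this can be extracted by a further Littlewood--Paley decomposition of $a(x,\cdot-\eta)$ around $\eta$ on the resonant annulus, but it requires an actual argument, not a cruder one. (In the paper's applications $\tau$ is always either $\eta$ itself or the center of a cube $\omega_s$ with $\eta\in\omega_s$ and $\ell$ is $\ell(\omega_s)$, so the $\Psi$-annuli for $j\ge1$ never reach $\eta$ and the resonance simply does not occur; if you are content to prove the lemma under that side condition, the issue evaporates.)
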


\begin{proof}
By the triangle inequality
we have
\[
\mbox{L.H.S. of }(\ref{eq:080824-3})
\le
\int_{Q(y,\ell(I_s))}
\left(
\int_{\R^n}\left|\Phi\left(\frac{\xi-c(\omega_u)}{6a\,\ell(\omega_u)}\right)
\right|\,d\xi\right)|f(z)|\,dz,
\]
from which we easily obtain (\ref{eq:080824-3}).

As for (\ref{eq:080824-4}),
we decompose
\[
a(x,D-\eta)-a_{\eta,\tau,\ell(I_s)}(x,D)
=
\sum_{j=1}^\infty
a_{\eta,\tau,2^j\ell(I_s)}(x,D)
-
a_{\eta,\tau,2^{j-1}\ell(I_s)}(x,D).
\]
Observe that the integral kernel $k_j(x,z)$
of $
a_{\eta,\tau,2^j\ell(I_s)}(x,D)
-
a_{\eta,\tau,2^{j-1}\ell(I_s)}(x,D)$
has the following bound
\[
|k_j(x,z)| \lesssim_L
(2^j\ell(I_s))^{n-2L}|x-z|^{-2L}
\]
for each $L \in \N$.
This inequality is summable, if $L=n$,
and we obtain
\begin{align*}
\mbox{L.H.S. of }(\ref{eq:080824-4})
&\lesssim
\int_{\R^n \setminus Q(y,\ell(I_s))}
\frac{|f(z)|\,dz}{\ell(I_s)^{n-2L}|z-y|^{2L}}
\lesssim
M_{\ge \ell(I_s)}f(y).
\end{align*}
Thus, the proof of (\ref{eq:080824-4}) is now complete.
\end{proof}

The following estimate can be obtained
by the same idea as (\ref{eq:080824-4}).
\begin{lemma}
\label{lem:Cotlar3}
Suppose that $u \le v$ and $\eta \in \omega_v$.
Then we have
\[
|(a_{\eta,\eta,\ell(\omega_u)}(x,D)
-a_{\eta,\eta,\ell(\omega_v)}(x,D))[\chi_{\R^n \setminus Q(y,\ell(I_v))}f](y)|
\lesssim
M_{\ge \ell(I_u)}f(y).
\]
\end{lemma}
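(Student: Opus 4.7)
The plan is to imitate the proof of (\ref{eq:080824-4}), but only telescope over the finitely many dyadic frequency scales that lie between $\ell(\omega_v)$ and $\ell(\omega_u)$. Since $u \le v$ forces $\omega_u \supset \omega_v$, write $\ell(\omega_u) = 2^j \ell(\omega_v)$ with $j \in \N_0$ and decompose
\[
a_{\eta,\eta,\ell(\omega_u)}(x,D) - a_{\eta,\eta,\ell(\omega_v)}(x,D)
= \sum_{i=1}^{j}\bigl(a_{\eta,\eta,2^i \ell(\omega_v)}(x,D) - a_{\eta,\eta,2^{i-1}\ell(\omega_v)}(x,D)\bigr).
\]

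Next I would estimate the integral kernel $k_i(x,z)$ of the $i$-th summand. Its symbol is $a(x,\xi-\eta)$ times a bump supported in an annulus of thickness $\sim 2^i \ell(\omega_v)$ around $\eta$. On this annulus (\ref{eq:c-alpha}) yields $|\partial_\xi^\alpha a(x,\xi-\eta)| \lesssim (2^i\ell(\omega_v))^{-|\alpha|}$, so integrating by parts $2L$ times in $\xi$ against the oscillation $e^{2\pi i(x-z)\cdot\xi}$ produces
\[
|k_i(x,z)| \lesssim_L (2^i \ell(\omega_v))^{n-2L}\,|z|^{-2L},
\]
exactly as in the kernel bound behind (\ref{eq:080824-4}).

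Applying this pointwise estimate on $\R^n \setminus Q(y,\ell(I_v))$ and invoking (\ref{eq:080824-1}) with $a = \ell(I_v)$ and exponent $2L$ in place of $L$ gives
\[
\bigl|(a_{\eta,\eta,2^i\ell(\omega_v)}(x,D) - a_{\eta,\eta,2^{i-1}\ell(\omega_v)}(x,D))[\chi_{\R^n \setminus Q(y,\ell(I_v))}f](y)\bigr|
\lesssim 2^{i(n-2L)}\,M_{\ge \ell(I_v)}f(y),
\]
where I used $\ell(I_v)\ell(\omega_v)=1$. Choosing any $L > n/2$, the factor $2^{i(n-2L)}$ forms a convergent geometric series in $i$; summing over $i=1,\ldots,j$ bounds the whole expression by a constant multiple of $M_{\ge\ell(I_v)}f(y)$ independently of $j$. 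Finally, $\ell(I_u)\le\ell(I_v)$ yields $M_{\ge\ell(I_v)}f(y)\le M_{\ge\ell(I_u)}f(y)$, giving the stated conclusion.

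The only real difficulty is the kernel estimate itself. One must be sure that the possible singularity of $a(x,\cdot)$ at the origin does not spoil the integration by parts; however, the annular cutoff $\Phi((\xi-\eta)/(6\cdot 2^i\ell(\omega_v)))-\Phi((\xi-\eta)/(6\cdot 2^{i-1}\ell(\omega_v)))$ forces $|\xi-\eta|\sim 2^i\ell(\omega_v)>0$ on its support, so (\ref{eq:c-alpha}) applies uniformly there. This is the same mechanism that produces the derivative bounds in Section \ref{section:preliminaries} and drives Lemma \ref{lem:Cotlar1}.
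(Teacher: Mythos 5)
Your proof is correct and carries out precisely what the paper gestures at when it says the lemma follows ``by the same idea as (\ref{eq:080824-4})'': you telescope over the finitely many dyadic scales between $\ell(\omega_v)$ and $\ell(\omega_u)$, obtain the kernel bound $(2^i\ell(\omega_v))^{n-2L}|z|^{-2L}$ by integration by parts (with the annular cutoff keeping $|\xi-\eta|$ away from the singularity of $a$), and convert it via (\ref{eq:080824-1}) and $\ell(I_v)\ell(\omega_v)=1$ into a summable factor $2^{i(n-2L)}$ against $M_{\ge\ell(I_v)}f(y)\le M_{\ge\ell(I_u)}f(y)$. No gaps.
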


\begin{lemma}
\label{lem:Cotlar7}
Let $s \in \D$ and $\eta \in \omega_s$.
Then we have
\[
|a_{s,\eta}(x,D)f(y)-a_{\eta,\eta,\ell(I_s)}(x,D)f(y)|
\lesssim
M_{\ge \ell(I_s)}f(y)
\]
for all $y \in \R^n$.
\end{lemma}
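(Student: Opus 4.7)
The plan is to establish a pointwise Schwartz-type bound on the integral kernel of the difference operator $a_{s,\eta}(x,D)-a_{\eta,\eta,\ell(I_s)}(x,D)$ and then deduce the stated maximal-function bound in the same manner as the proof of (\ref{eq:080824-4}) in Lemma~\ref{lem:Cotlar1}. The symbols differ by
\[
a(x,\xi-\eta)\left[\Phi\!\left(\frac{\xi-c(\omega_s)}{6\ell(\omega_s)}\right) - \Phi\!\left(\frac{\xi-\eta}{6\ell(I_s)}\right)\right],
\]
and since $\eta\in\omega_s$, both bumps live in a frequency region clustered near $\omega_s$, with the broader of the two (of scale $\ell(\omega_s)=1/\ell(I_s)$) dominating. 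One therefore expects the kernel of the difference operator to be concentrated at spatial scale $\ell(I_s)$.

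The first step is to write the kernel $K$ of the difference as a single oscillatory Fourier integral and perform repeated integration by parts in $\xi$, using the $S^0$ bounds (\ref{eq:c-alpha}) on $a$ together with the explicit scales of the two cutoffs. The goal is the Schwartz-type decay
\[
|K(y,y-z)| \lesssim_L \ell(I_s)^{-n}\left(1+\frac{|y-z|}{\ell(I_s)}\right)^{-L}
\]
for every $L>0$. With this estimate in hand, the conclusion follows by writing
\[
|(a_{s,\eta}(x,D)-a_{\eta,\eta,\ell(I_s)}(x,D))f(y)| \le \int_{\R^n}|K(y,y-z)||f(z)|\,dz,
\]
splitting the integral into $Q(y,\ell(I_s))$ and its complement, bounding the first piece by $\ell(I_s)^{-n}\int_{Q(y,\ell(I_s))}|f|\lesssim M_{\ge\ell(I_s)}f(y)$, and the second piece by (\ref{eq:080824-1}) with $L>n$.

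The main obstacle is justifying the integration by parts when $|\xi-\eta|$ is small, since the $S^0$ hypothesis only yields $|\partial^\alpha_\xi a(x,\xi-\eta)| \lesssim |\xi-\eta|^{-|\alpha|}$, which is not integrable in $\xi$ for large $|\alpha|$. I would circumvent this by a dyadic decomposition in the shifted variable $w=\xi-\eta$: on each annulus $|w|\sim 2^k$ one performs the usual number of integrations by parts, while on the innermost piece $|w|\lesssim\ell(I_s)$ one uses the $L^\infty$ bound on $a$ directly, balancing the volume against the oscillation factor. Summing the annular contributions yields the required kernel decay, in close analogy with the kernel bounds for the dyadic pieces $k_j$ derived in Section~\ref{section:preliminaries}.
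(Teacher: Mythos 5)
The proposal never invokes the hypothesis $\eta\in\omega_s$, and that is exactly where it goes wrong. Since $\eta\in\omega_s$, each coordinate of $(\eta-c(\omega_s))/(6\ell(\omega_s))$ has absolute value $\le 1/12<9/100$, so $\Phi\bigl((\eta-c(\omega_s))/(6\ell(\omega_s))\bigr)=1$; combined with $\Phi(0)=1$, \emph{both} bump functions are identically $1$ on a neighbourhood of $\xi=\eta$, and therefore the difference symbol \emph{vanishes identically} there. This is the whole point of the lemma: the $\xi$-support of the difference stays a fixed proportion of $\ell(\omega_s)$ away from $\eta$, the $S^0$ derivative bounds $|\partial^\alpha_\xi a(x,\xi-\eta)|\lesssim|\xi-\eta|^{-|\alpha|}$ become uniform of size $\ell(\omega_s)^{-|\alpha|}$ on that support, and a \emph{direct} integration by parts (no dyadic decomposition in $\xi-\eta$) gives the Schwartz kernel bound $|K(y,y-z)|\lesssim_L\ell(I_s)^{-n}(1+|y-z|/\ell(I_s))^{-L}$, whence $M_{\ge\ell(I_s)}f(y)$. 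That is what ``straightforward by integration by parts'' means here. (Note also that the scale in $a_{\eta,\eta,\cdot}$ should be read as $\ell(\omega_s)$, matching the definition $a_{s,\eta}=a_{\eta,c(\omega_s),\ell(\omega_s)}$ and Lemma~\ref{lem:Cotlar3}; only then are the two cutoffs at the same frequency scale and the cancellation near $\xi=\eta$ at that scale.)

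The workaround you propose instead --- splitting into annuli $|\xi-\eta|\sim 2^k$ and summing the resulting kernel bounds --- does \emph{not} recover the claimed Schwartz decay, and the gap is not cosmetic. Summing the pointwise bounds $2^{kn}(1+2^k|y-z|)^{-2L}$ over the active range of $k$ produces, in the regime $\ell(I_s)\lesssim|y-z|\lesssim\ell(\omega_s)$, a Calder\'on--Zygmund tail $|y-z|^{-n}$ and nothing better. That is exactly what happens with the paper's own $k_j$ in Section~\ref{section:preliminaries} (the sum $\sum_j|k_j|$ is only $|z|^{-(n+|\alpha|+|\beta|)}$, i.e.\ CZ, not Schwartz), so the analogy you cite actually undercuts the desired bound. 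A kernel with a $|y-z|^{-n}$ tail over a thick range of scales is not dominated by $M_{\ge\ell(I_s)}f$ alone; one picks up a $\log(\ell(\omega_s)/\ell(I_s))$ divergence. This is precisely why Lemma~\ref{lem:Cotlar4}, which treats $a_{s,\eta}(x,D)$ \emph{without} subtracting a matching bump, needs the extra term $|a(x,D-\eta)f(y)|$ on the right-hand side. In short: without the observation that the two cutoffs cancel near $\xi=\eta$, the claimed kernel estimate is false as stated, and the maximal-function bound would not follow.
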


\begin{proof}
The proof is straightforward
by using integration by parts.
\end{proof}

\subsection{Proof of Proposition \ref{prop:Cotlar}}
\noindent

Fix a point $z \in Q(y,\ell(I_u))$.
In view of Lemmas \ref{lem:Cotlar1}, \ref{lem:Cotlar3} and \ref{lem:Cotlar7},
it is sufficient to prove Proposition \ref{prop:Cotlar}
assuming that $f$ is supported
outside $Q(z,2\ell(I_u))$.
Note that
\[
M_{\ge \ell(I_u)}f(y) \simeq_\kappa M_{\ge \ell(I_u)}f(y^*)
\]
whenever $|y-y^*| \le \kappa\,b$.
Let us establish
\begin{align*}
|a_{\eta_0,\eta_0,\ell(I_v)}(x,D)f(y)
-a_{\eta_0,\eta_0,\ell(I_u)}(x,D)f(y)|
\lesssim
M_{\ge \ell(I_u)}f(y)
+
\left|a(x,D-\eta_1)f(z)\right|,
\end{align*}
which immediately yields Proposition \ref{prop:Cotlar}.
For the time being,
we concentrate on reducing the matter
to the case when $\eta_0=\eta_1$.

\begin{lemma}
\label{lem:Cotlar8}
Let $u \le v \in \D$ and $\eta_0,\eta_1 \in \omega_v$.
Set
\begin{align*}
\lefteqn{
A_{\eta_0,\eta_1,u,v}(x,D)
}\\
&:=a_{\eta_0,\eta_0,\ell(I_u)}(x,D)
-a_{\eta_1,\eta_1,\ell(I_u)}(x,D)
-a_{\eta_0,\eta_0,\ell(I_v)}(x,D)
+a_{\eta_1,\eta_1,\ell(I_v)}(x,D).
\end{align*}
Then we have
\[
|A_{\eta_0,\eta_1,u,v}(x,D)[\chi_{Q(y,\ell(I_v))}f](y)|
\lesssim
M_{\ge \ell(I_u)}f(y).
\]
\end{lemma}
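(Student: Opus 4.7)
The plan is to compute the integral kernel of $A_{\eta_0,\eta_1,u,v}(x,D)$ in closed form and then exploit two independent cancellations: the $\eta_0 \leftrightarrow \eta_1$ phase difference, and the annular nature of the cutoff difference $\Phi(\cdot/(6\ell(I_u))) - \Phi(\cdot/(6\ell(I_v)))$.

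First, by substituting $\zeta = \xi - \eta$ inside each summand $a_{\eta,\eta,\ell}(x,D)$, the integral kernel factors as
\[
k(y,y-w) \;=\; \bigl(e^{2\pi i \eta_0 \cdot (y-w)} - e^{2\pi i \eta_1 \cdot (y-w)}\bigr)\,\tilde F(y,y-w),
\]
where
\[
\tilde F(x,z) \;=\; \int_{\R^n} a(x,\zeta)\bigl[\Phi(\zeta/(6\ell(I_u))) - \Phi(\zeta/(6\ell(I_v)))\bigr]\, e^{2\pi i \zeta \cdot z}\,d\zeta.
\]
The oscillation carries the $\eta$-difference, while $\tilde F$ carries the cutoff difference. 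Crucially, the cutoff difference vanishes identically on a neighborhood of $\zeta = 0$, so the $\zeta = 0$ singularity of the derivatives of $a(x,\cdot)$ is harmless.

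Next, estimate each factor separately. Since $\eta_0, \eta_1 \in \omega_v$ we have $|\eta_0 - \eta_1| \lesssim \ell(\omega_v)$, so the mean-value inequality gives
\[
\bigl|e^{2\pi i\eta_0 z} - e^{2\pi i\eta_1 z}\bigr| \;\lesssim\; \ell(\omega_v)\,|z|.
\]
For $\tilde F$, repeated integration by parts against $e^{2\pi i \zeta \cdot z}$, using (\ref{eq:c-alpha}) on the annular region where the cutoff difference is supported and mimicking the derivation of the pointwise bound on $k_j$ in Section \ref{section:preliminaries}, yields
\[
|\tilde F(x,z)| \;\lesssim_L\; \ell(\omega_v)^n \bigl(1 + \ell(\omega_v)|z|\bigr)^{-L}
\]
for every $L \in \N$.

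Combining the two bounds with $z = y-w$ and integrating over $w \in Q(y,\ell(I_v))$, on which $\ell(\omega_v)|y-w| \lesssim 1$ so that the decay factor is $\simeq 1$, we obtain
\[
|A_{\eta_0,\eta_1,u,v}(x,D)[\chi_{Q(y,\ell(I_v))}f](y)|
\;\lesssim\; \ell(\omega_v)^{n+1}\!\int_{Q(y,\ell(I_v))}\!|y-w|\,|f(w)|\,dw
\;\lesssim\; \frac{1}{\ell(I_v)^n}\int_{Q(y,\ell(I_v))}|f(w)|\,dw,
\]
where in the last step we used $|y-w| \le \ell(I_v) = \ell(\omega_v)^{-1}$. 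This average is bounded by $M_{\ge \ell(I_v)}f(y) \le M_{\ge \ell(I_u)}f(y)$ since $\ell(I_u) \le \ell(I_v)$, which gives the conclusion.

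The main obstacle is the pointwise estimate on $\tilde F$ with the correct prefactor $\ell(\omega_v)^n$ and Schwartz-type decay at scale $1/\ell(\omega_v)$. What makes this possible is the annular structure of the cutoff difference: its support avoids a neighborhood of $\zeta = 0$, allowing unlimited integration by parts despite the homogeneous-symbol estimate (\ref{eq:c-alpha}) being singular at $\zeta = 0$. Once this kernel bound is in place, the extra factor $\ell(\omega_v)|z|$ provided by the phase-difference cancellation is exactly what is needed to absorb the powers and reproduce a pure maximal-function bound.
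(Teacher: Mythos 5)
Your factorization of the kernel into a phase-difference term and a Fourier-localized term $\tilde F$ is the right first move, and it is essentially what the paper does (the paper keeps the dyadic annular decomposition explicit and estimates the pieces $\alpha_j$, then sums). However, your key kernel estimate
\[
|\tilde F(x,z)|\;\lesssim_L\;\ell(\omega_v)^n\bigl(1+\ell(\omega_v)|z|\bigr)^{-L}
\]
is false, and the argument built on it proves a strictly stronger (and false) inequality than the lemma asserts.

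The symbol factor $\Phi(\cdot/(6\ell(\omega_u)))-\Phi(\cdot/(6\ell(\omega_v)))$ — note that the third subscript of $a_{\eta,\tau,\ell}$ is a frequency-side scale; since $u\le v$ means $\omega_u\supset\omega_v$, this difference is supported in the annulus $\{c\,\ell(\omega_v)\le|\zeta|\le C\,\ell(\omega_u)\}$ — has total mass $\simeq\ell(\omega_u)^n$, \emph{not} $\ell(\omega_v)^n$. Consequently $\tilde F(x,0)=\int a(x,\zeta)\Delta\Phi(\zeta)\,d\zeta$ can be $\simeq\ell(\omega_u)^n$ (take $a\equiv1$), which is much larger than $\ell(\omega_v)^n$ when $\ell(\omega_u)\gg\ell(\omega_v)$. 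The correct pointwise bound is the two-scale estimate $|\tilde F(x,z)|\lesssim\min(\ell(\omega_u)^n,|z|^{-n})$: there is no single Schwartz decay scale, because the Fourier support is a thick annulus spanning many dyadic scales. Your mimicry of the $k_j$ estimate from Section~\ref{section:preliminaries} is exactly what gives the individual dyadic pieces $(2^j\ell(\omega_v))^n(1+2^j\ell(\omega_v)|z|)^{-L}$; one must then \emph{sum} over $j$, which is where the minimum above comes from. You instead collapsed the whole annulus to the innermost scale.

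The consequence is visible in your conclusion. You end up with $\ell(I_v)^{-n}\int_{Q(y,\ell(I_v))}|f|\lesssim M_{\ge\ell(I_v)}f(y)$, which is sharper than the claimed $M_{\ge\ell(I_u)}f(y)$ — and wrong. Take $n\ge2$, $a\equiv1$, and let $f$ be a unit-mass bump concentrated at a point $y^*$ with $|y-y^*|=\ell(I_u)$, with $\eta_0,\eta_1$ a distance $\simeq\ell(\omega_v)$ apart. Then the left-hand side is $\simeq\ell(\omega_v)\ell(\omega_u)^{-1}\cdot\ell(\omega_u)^n=\ell(\omega_v)\ell(\omega_u)^{n-1}$, while $\ell(I_v)^{-n}\int_{Q(y,\ell(I_v))}|f|\simeq\ell(\omega_v)^n$, and $\ell(\omega_v)\ell(\omega_u)^{n-1}\gg\ell(\omega_v)^n$ once $\ell(\omega_u)\gg\ell(\omega_v)$. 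The genuine bound $M_{\ge\ell(I_u)}f(y)$, which is $\simeq\ell(\omega_u)^n$ for this $f$, absorbs it. The appearance of $\ell(I_u)$ (not $\ell(I_v)$) on the right side of the lemma is precisely the signal that the outer annulus scale $\ell(\omega_u)=1/\ell(I_u)$ must enter, and your estimate lost track of it.

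To repair the argument, keep your factorization and the phase bound $|e^{2\pi i\eta_0\cdot z}-e^{2\pi i\eta_1\cdot z}|\lesssim\min(1,\ell(\omega_v)|z|)$, but replace the $\tilde F$ bound by $\min(\ell(\omega_u)^n,|z|^{-n})$ and split the integration over $Q(y,\ell(I_v))$ at $|z|=\ell(I_u)$: the inner part uses the sup bound $\ell(\omega_u)^n$ together with $\ell(\omega_v)\ell(I_u)\le1$, and the outer part produces exactly $\int_{\ell(I_u)<|z|<\ell(I_v)}\ell(\omega_v)|z|^{1-n}|f(y-z)|\,dz$, which (\ref{eq:080824-2}) controls by $M_{\ge\ell(I_u)}f(y)$. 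This is, modulo bookkeeping, what the paper's explicit dyadic decomposition accomplishes.
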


\begin{proof}
Note that $A_{\eta_0,\eta_1,u,v}(x,D)$
can be written as
\begin{align*}
\lefteqn{
A_{\eta_0,\eta_1,u,v}(x,D)[\chi_{Q(y,\ell(I_v))}f](y)
}\\
&=
\sum_{j=1}^{\log_2 \frac{\ell(I_v)}{\ell(I_u)}}
\int_{Q(y,\ell(I_v)) \setminus Q(y,\ell(I_u))}
\left(
\int_{\R^n}
\alpha_j(y,y^*,\xi;\eta_0,\eta_1)\,d\xi
\right)f(y^*)\,dy^*,
\end{align*}
where
\begin{align*}
\alpha_j(y,y^*,\xi;\eta_0,\eta_1)
&:=-
a(y,\xi)
\Psi\left(\frac{\xi}{3 \cdot 2^{j+1}\ell(\omega_u)}\right)\\
&\quad \times
(\exp(2\pi i(\xi+\eta_0) \cdot (y-y^*))
-\exp(2\pi i(\xi+\eta_1) \cdot (y-y^*))).
\end{align*}
An integration by parts yields
\[
|\alpha_j(y,y^*,\xi;\eta_0,\eta_1)|
\lesssim_L
|y-y^*|^{1-2L}\ell(\omega_v)(2^j \ell(\omega_u))^{n-2L}
\]
for all $L \in \N$.
If $L>n/2$,
then this inequality is summable over $j \in \N$
and we obtain
\[
\sum_{j=1}^\infty
|\alpha_j(y,y^*,\xi;\eta_0,\eta_1)|
\lesssim_L
\ell(I_u)^{n+1}|y-y^*|^{-2n-1}.
\]

Inserting this estimate and invoking (\ref{eq:080824-1}),
we obtain
\[
|A_{u,v,\eta_0,\eta_1}(x,D)f(y)|
\lesssim
\int_{\R^n \setminus Q(y,\ell(I_u))}
\frac{\ell(I_u)|f(y^*)|}{|y-y^*|^{n+1}}\,dy^*
\lesssim
M_{\ge \ell(I_u)}f(y).
\]
Thus, the proof is therefore complete.
\end{proof}

\begin{corollary}
\label{cor:Cotlar8}
Suppose that $u \le v$ and $\eta_0,\eta_1 \in \omega_v$.
Then we have
\[
|(a_{u,\eta_0}(x,D)-a_{v,\eta_0}(x,D)
-a_{u,\eta_1}(x,D)+a_{v,\eta_1}(x,D))f(y)|
\lesssim
M_{\ge \ell(I_u)}f(y).
\]
\end{corollary}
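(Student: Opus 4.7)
The plan is to first apply Lemma \ref{lem:Cotlar7} to replace each of the four operators $a_{u,\eta_0}(x,D)$, $a_{v,\eta_0}(x,D)$, $a_{u,\eta_1}(x,D)$, $a_{v,\eta_1}(x,D)$ by its ``decoupled'' analogue $a_{\eta_j,\eta_j,\ell(I_s)}(x,D)$, thereby reducing the claim to bounding $|A_{\eta_0,\eta_1,u,v}(x,D)f(y)|$ with the operator $A_{\eta_0,\eta_1,u,v}(x,D)$ introduced in Lemma \ref{lem:Cotlar8}. Since $u\le v$ gives $\omega_v\subset \omega_u$, we have $\eta_0,\eta_1 \in \omega_u \cap \omega_v$, so Lemma \ref{lem:Cotlar7} applies for both choices of tile. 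The four resulting errors are bounded by $M_{\ge \ell(I_u)}f(y)$ and $M_{\ge \ell(I_v)}f(y)$; the latter is dominated by the former because $\ell(I_u)\le \ell(I_v)$ forces $M_{\ge \ell(I_v)}f\le M_{\ge \ell(I_u)}f$. Thus it suffices to show $|A_{\eta_0,\eta_1,u,v}(x,D)f(y)|\lesssim M_{\ge \ell(I_u)}f(y)$.

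Next, decompose $f=f_1+f_2$ with $f_1:=f\chi_{Q(y,\ell(I_v))}$ and $f_2:=f\chi_{\R^n\setminus Q(y,\ell(I_v))}$. The near piece $f_1$ is exactly what Lemma \ref{lem:Cotlar8} handles, so $|A_{\eta_0,\eta_1,u,v}(x,D)f_1(y)| \lesssim M_{\ge \ell(I_u)}f(y)$. For the far piece, the plan is to insert $\pm a(x,D-\eta_j)$ and rewrite
\[
A_{\eta_0,\eta_1,u,v}(x,D)=\sum_{\eta \in \{\eta_0,\eta_1\}}\!\pm\bigl([a(x,D-\eta)-a_{\eta,\eta,\ell(I_v)}(x,D)]-[a(x,D-\eta)-a_{\eta,\eta,\ell(I_u)}(x,D)]\bigr),
\]
the two copies of $a(x,D-\eta)$ cancelling between the brackets. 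Apply each of the four resulting differences to $f_2$ and invoke (\ref{eq:080824-4}) of Lemma \ref{lem:Cotlar1} with $s\in\{u,v\}$ and $\tau=\eta=\eta_j$. Because $f_2$ is supported outside $Q(y,\ell(I_v))\supseteq Q(y,\ell(I_\epsilon))$ for $\epsilon\in\{u,v\}$, we have $\chi_{\R^n\setminus Q(y,\ell(I_\epsilon))}f_2=f_2$, so each of the four terms is majorised by $M_{\ge \ell(I_\epsilon)}f_2(y)\le M_{\ge \ell(I_u)}f(y)$.

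Summing the near and far contributions yields the claimed bound. There is essentially no technical obstacle beyond bookkeeping; every analytic ingredient has been established in the section. The only point that requires attention is, in each application of (\ref{eq:080824-4}), to match the cutoff scale $\ell(I_s)$ to the support of $f_2$, which is automatic since $Q(y,\ell(I_u))\subset Q(y,\ell(I_v))$, and to track that the monotonicity of $M_{\ge b}$ in $b$ always converts $M_{\ge \ell(I_v)}f$ into the desired $M_{\ge \ell(I_u)}f$.
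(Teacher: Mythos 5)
Your proof is correct and follows the same near/far decomposition at scale $\ell(I_v)$ that the paper's one-line ``Combine Lemmas \ref{lem:Cotlar1}, \ref{lem:Cotlar3} and \ref{lem:Cotlar8}'' must intend, but the specific lemmas invoked are not the same set the paper names.

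You call on Lemma \ref{lem:Cotlar7} for the initial conversion $a_{s,\eta_j}(x,D)\rightsquigarrow a_{\eta_j,\eta_j,\,\ell(\cdot)}(x,D)$ (exploiting $\eta_0,\eta_1\in\omega_v\subset\omega_u$), which the paper does not list in this proof; this step is genuinely needed to turn the four $a_{s,\eta_j}$ into the operator $A_{\eta_0,\eta_1,u,v}$ of Lemma \ref{lem:Cotlar8}, so your explicit citation is if anything a clarification. The far-field piece you handle by the telescoping $\pm a(x,D-\eta_j)$ insertion and four applications of (\ref{eq:080824-4}) from Lemma \ref{lem:Cotlar1}; this is precisely the mechanism behind Lemma \ref{lem:Cotlar3} (which the paper remarks is proved ``by the same idea as (\ref{eq:080824-4})''), so you have effectively reproved Lemma \ref{lem:Cotlar3} inline rather than citing it. Either route is fine: citing Lemma \ref{lem:Cotlar3} twice (once per $\eta_j$) is shorter, while your version keeps the argument self-contained from (\ref{eq:080824-4}). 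The remaining ingredients --- Lemma \ref{lem:Cotlar8} for the near piece and the monotonicity $M_{\ge\ell(I_v)}f\le M_{\ge\ell(I_u)}f$ coming from $I_u\subset I_v$ --- match the paper. In short, same decomposition, same estimates; you substitute a direct derivation for the citation of Lemma \ref{lem:Cotlar3} and make explicit the use of Lemma \ref{lem:Cotlar7} that the paper leaves tacit.
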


\begin{proof}
Combine Lemmas \ref{lem:Cotlar1}, \ref{lem:Cotlar3} and \ref{lem:Cotlar8}.
\end{proof}

In view of Corollary \ref{cor:Cotlar8},
we can assume that $\eta_0=\eta_1=\eta \in \omega_v$,
which we shall do.

\begin{lemma}
\label{lem:Cotlar4}
Let $s \in \D$.
Then we have
\[
|a_{s,\eta}(x,D)f(y)|
\lesssim
M_{\ge \ell(I_s)}f(y)
+
|a(x,D-\eta)f(y)|
\]
for all $y \in \R^n$.
\end{lemma}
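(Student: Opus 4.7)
The plan is to split $f$ into a piece localized near $y$ and its complement, then estimate $a_{s,\eta}(x,D)$ on each summand separately using the machinery of Lemma \ref{lem:Cotlar1}. Set $f_1 := \chi_{Q(y,\ell(I_s))} f$ and $f_2 := f - f_1$, so that
\[
a_{s,\eta}(x,D) f(y) = a_{s,\eta}(x,D) f_1(y) + a_{s,\eta}(x,D) f_2(y).
\]
Since $a_{s,\eta} = a_{\eta,\,c(\omega_s),\,\ell(\omega_s)}$ by definition, the local contribution is bounded directly by inequality (\ref{eq:080824-3}) of Lemma \ref{lem:Cotlar1}, applied with $\tau = c(\omega_s)$ and the parameter so that the frequency scale in $\Phi$ matches $\ell(\omega_s)$; this yields $|a_{s,\eta}(x,D) f_1(y)| \lesssim M_{\ge \ell(I_s)} f(y)$.

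For the non-local piece, compare $a_{s,\eta}(x,D)$ with the full operator $a(x,D-\eta)$ via
\[
a_{s,\eta}(x,D) f_2(y) = a(x,D-\eta) f_2(y) + \bigl[a_{s,\eta}(x,D) - a(x,D-\eta)\bigr] f_2(y).
\]
The difference term is controlled by $M_{\ge \ell(I_s)} f(y)$ using inequality (\ref{eq:080824-4}) of Lemma \ref{lem:Cotlar1} with $\tau = c(\omega_s)$, which is tailored precisely to this situation: the non-local support of $f_2$ together with the frequency-localization of the symbol of $a_{s,\eta}$ in a neighborhood of $c(\omega_s)$ at scale $\ell(\omega_s)$ forces rapid decay of the kernel difference and yields a maximal-function bound. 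The remaining term $a(x,D-\eta) f_2(y) = a(x,D-\eta)[\chi_{\R^n \setminus Q(y,\ell(I_s))} f](y)$ is an $\ell(I_s)$-truncation of the singular integral $a(x,D-\eta) f(y)$.

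The main obstacle is reconciling this truncated singular integral with the quantity $|a(x,D-\eta) f(y)|$ on the right-hand side of the lemma. Reading the latter in the maximal-truncation sense associated with (\ref{eq:080823-3}), namely as $\sup_{\varepsilon > 0} |a(x,D-\eta)[\chi_{\R^n \setminus Q(y,\varepsilon)} f](y)|$ — which is how this lemma is invoked inside the proof of Proposition \ref{prop:Cotlar} — the bound $|a(x,D-\eta) f_2(y)| \le |a(x,D-\eta) f(y)|$ is immediate and the proof concludes. Otherwise, one must additionally bound $|a(x,D-\eta) f_1(y)|$, i.e., the singular integral applied to the local piece at a point inside its support; this requires combining the moment condition (\ref{eq:moment}) with the $|z|^{-n}$-type kernel decay recorded in the Preliminaries to handle the principal-value singularity of the kernel near zero.
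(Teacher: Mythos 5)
Your decomposition $f = f_1 + f_2$ with $f_1 = \chi_{Q(y,\ell(I_s))} f$, followed by (\ref{eq:080824-3}) on the local piece and (\ref{eq:080824-4}) on the difference $(a(x,D-\eta)-a_{s,\eta}(x,D))f_2(y)$, is exactly what the paper's terse ``immediate consequence of (\ref{eq:080824-3}) and (\ref{eq:080824-4})'' means, so the approach matches the paper.

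Where you add genuine value is in flagging that what this argument actually leaves over is $a(x,D-\eta)f_2(y)$, not $a(x,D-\eta)f(y)$. Your first resolution is the correct one: reading the right-hand side as $\sup_{\varepsilon>0}|a(x,D-\eta)[\chi_{\R^n\setminus Q(y,\varepsilon)}f](y)|$ is what the statement must mean, since $f_2$ is precisely one admissible truncation and, downstream, Proposition \ref{prop:Cotlar} is both stated and invoked with exactly this supremum (see the inequality for $F_{2,J}$ in Section \ref{section:proofs}, and (\ref{eq:080823-3})). Your second, fallback resolution is not viable, however, and you should drop it: you suggest controlling $|a(x,D-\eta)f_1(y)|$ via the moment condition (\ref{eq:moment}) and the $|z|^{-n}$ kernel decay, but the moment condition does not tame the kernel singularity at $z=y$, and the principal-value quantity $a(x,D-\eta)f_1(y)$ for $y$ in the interior of $\mathrm{supp}\,f_1$ simply cannot be bounded pointwise by $M_{\ge\ell(I_s)}f(y)$ --- the non-integrability of $|z|^{-n}$ near $0$ is fatal, and even when $f_1$ lives in an annulus bounded away from $y$ one only gets a logarithmic bound, not a maximal-function bound. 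So the maximal-truncation reading is not just one possible fix; it is the only one that works and the one the paper tacitly assumes.
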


\begin{proof}
This is an immediate consequence
of (\ref{eq:080824-3}) and (\ref{eq:080824-4}).
\end{proof}

Proposition \ref{prop:Cotlar} will be proved completely
once we establish the following.

\begin{lemma}
\label{lem:Cotlar5}
Let $s \in \D$.
Then we have
\begin{align*}
|a(x,D-\eta)f(y)|
\lesssim
M_{\ge \ell(I_s)}f(y)
+
|a(x,D-\eta)f(z)|.
\end{align*}
for all $z \in Q(y,\ell(I_s))$.
\end{lemma}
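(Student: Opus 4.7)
The plan, under the support assumption $\supp f \subset \R^n \setminus Q(z, 2\ell(I_s))$ inherited from the reduction at the start of the proof of Proposition \ref{prop:Cotlar} (which is applied here with $s = u$), is as follows. The first step is the conjugation identity
\[
a(x, D - \eta) = M_\eta\, a(x, D)\, M_{-\eta}.
\]
Setting $g(w) := e^{-2\pi i \eta w} f(w)$, this gives $|a(x, D - \eta) f(y)| = |a(x, D) g(y)|$ and $|a(x, D - \eta) f(z)| = |a(x, D) g(z)|$, while $|g| = |f|$ ensures $M_{\ge \ell(I_s)} g(y) = M_{\ge \ell(I_s)} f(y)$. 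The purpose of this reduction is that the kernel of $a(x, D)$ in its second variable, namely $k(x, \zeta - w)$, enjoys the gradient bound $|\nabla_\zeta k(x, \zeta - w)| \lesssim |\zeta - w|^{-n-1}$ from Section \ref{section:preliminaries} \emph{independently} of $\eta$; working with $a(x, D - \eta)$ directly would produce an unwanted $|\eta|\,|\zeta - w|^{-n}$ term when differentiating the modulation.

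It then suffices to show $|a(x, D) g(y) - a(x, D) g(z)| \lesssim M_{\ge \ell(I_s)} f(y)$ and apply the triangle inequality. For any $w \in \supp f$, the support assumption together with $z \in Q(y, \ell(I_s))$ ensures $|\zeta - w| \ge \ell(I_s)$ and $|\zeta - w| \sim |y - w|$ for every $\zeta$ on the segment from $z$ to $y$; the mean value theorem then gives
\[
|k(x, y - w) - k(x, z - w)| \lesssim \ell(I_s)\, |y - w|^{-n-1}.
\]
Integrating against $|f|$ and invoking (\ref{eq:080824-1}) with $L = n+1$ and $a = \ell(I_s)$ produces the bound
\[
|a(x, D) g(y) - a(x, D) g(z)| \lesssim \int_{|y - w| \ge \ell(I_s)} \frac{\ell(I_s)\, |f(w)|}{|y - w|^{n+1}}\, dw \lesssim M_{\ge \ell(I_s)} f(y).
\]

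The main obstacle is conceptual rather than computational: one must recognize that the support hypothesis on $f$, implicit from the global reduction at the start of the proof of Proposition \ref{prop:Cotlar} but not stated in the lemma itself, is essential. Without it, the local contribution of $a(x, D - \eta)$ acting on $f \chi_{Q(z, 2\ell(I_s))}$ carries no cancellation and cannot be absorbed into either term on the right-hand side. Once the support hypothesis is in force, the remaining ingredients — the conjugation trick to eliminate $\eta$ from the kernel smoothness, the mean value estimate against the $|z|^{-n-1}$ gradient bound on $k(x, z)$, and the standard maximal function bound (\ref{eq:080824-1}) — combine in an entirely routine way.
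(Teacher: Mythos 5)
Your proof takes a genuinely different route from the paper's. The paper decomposes $a(x,D-\eta)$ via the frequency-truncated symbol $a_{\eta,\eta,\ell(\omega_s)}(x,\xi)$ (the untruncated remainder being absorbed by (\ref{eq:080824-4})), then compares $a_{\eta,\eta,\ell(\omega_s)}(x,D)f$ at $y$ and at $z$ by dyadically decomposing the difference kernel, integrating by parts at each scale, and interpolating to obtain a summable $|y-z^*|^{-n-\theta}$ bound. You instead eliminate $\eta$ entirely via the conjugation $a(x,D-\eta)=M_\eta\,a(x,D)\,M_{-\eta}$ and apply the mean value theorem directly to the full (already summed) kernel $k$ of $a(x,D)$, invoking the pointwise bound $\sum_j|\partial^\alpha_x\partial^\beta_z k_j(x,z)|\lesssim|z|^{-(n+|\alpha|+|\beta|)}$ from Section \ref{section:preliminaries} and then (\ref{eq:080824-1}) with $L=n+1$. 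This is simpler: it skips both the truncation and the interpolation. You also correctly flag the support assumption $\supp f\subset\R^n\setminus Q(z,2\ell(I_s))$ inherited from the reduction at the start of the proof of Proposition \ref{prop:Cotlar}, which is indeed essential and left implicit in the lemma statement.

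There is one slip in the mean value step. In this paper's notation $a(x,D)g(y)$ means the function $a(\cdot,D)g$ evaluated at $y$ (the paper's own proof makes this explicit by writing $a(y,\xi-\eta)$ and $a(z,\xi-\eta)$ in the two integrals), so the kernel difference you must estimate is
\[
k(y,\,y-w)-k(z,\,z-w),
\]
in which \emph{both} arguments of $k$ move, not $k(x,y-w)-k(x,z-w)$ with the first argument frozen. The mean value theorem along the segment from $z$ to $y$ therefore produces the sum $|\nabla_1 k|+|\nabla_2 k|$, not $|\nabla_2 k|$ alone. Fortunately the paper's kernel estimate bounds $\partial_x k$ and $\partial_z k$ to the identical order $|z|^{-n-1}$, so the bound $\lesssim \ell(I_s)\,|y-w|^{-n-1}$ survives unchanged and the argument is easily repaired by adding the missing $x$-derivative term.
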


\begin{proof}
We shall control
\[
|
a_{\eta,\eta,\ell(\omega_s)}(x,D)f(y)
-
a_{\eta,\eta,\ell(\omega_s)}(x,D)f(z)
|,
\]
which is sufficient by virtue of (\ref{eq:080824-4}).
Note that
\begin{align*}
a_{\eta,\eta,\ell(\omega_s)}(x,D)f(y)
-
a_{\eta,\eta,\ell(\omega_s)}(x,D)f(z)=
\int k(z^*)f(z^*)\,dz^*,
\end{align*}
where
\begin{align*}
k(z^*)
&:=
\int_{\R^n}
a(y,\xi-\eta)
\Phi\left(\frac{\xi-\eta}{6\ell(\omega_s)}\right)
\exp(2\pi i(y-z^*) \cdot \xi)\,d\xi\\
&\quad -
\int_{\R^n}
a(z,\xi-\eta)
\Phi\left(\frac{\xi-\eta}{6\ell(\omega_s)}\right)
\exp(2\pi i(z-z^*) \cdot \xi)\,d\xi.
\end{align*}
Let us define
\begin{align*}
k_j(z^*)
&:=-
\int_{\R^n}
a(y,\xi-\eta)
\Psi\left(\frac{\xi-\eta}{3 \cdot 2^{3-j}\ell(\omega_s)}\right)
\exp(2\pi i(y-z^*) \cdot \xi)\,d\xi\\
&\quad +
\int_{\R^n}
a(z,\xi-\eta)
\Psi\left(\frac{\xi-\eta}{3 \cdot 2^{3-j}\ell(\omega_s)}\right)
\exp(2\pi i(z-z^*) \cdot \xi)\,d\xi.
\end{align*}
Then we have $\displaystyle k=\sum_{j=1}^\infty k_j$.

A simple calculation yields
\[
|k_j(z^*)|
\lesssim_L
\ell(I_s)(2^{-j}\ell(\omega_s))^{n+1-2L}
|y-z^*|^{-2L}
\]
for all $L \in \N$.
Interpolating this inequality with $L=0,n+1$,
we obtain
\[
|k_j(z^*)|
\lesssim_\theta
\ell(I_s)(2^{-j}\ell(\omega_s))^{1-\theta}
|y-z^*|^{-n-\theta}
\]
for $0<\theta<1$
and hence
\[
\sum_{j=1}^\infty|k_j(z^*)|
\lesssim_\theta
\ell(\omega_s)^{-\theta}|y-z^*|^{-n-\theta}.
\]
As a result,
we obtain
\[
|a_{\eta,\eta,\ell(\omega_s)}(x,D)f(y)
-a_{\eta,\eta,\ell(\omega_s)}(x,D)f(z)|
\lesssim
M_{\ge \ell(I_s)} f(y).
\]
This is the desired result.
\end{proof}

\section{Proofs of Theorems \ref{thm:main} and \ref{thm:main2}}
\label{section:proofs}
\noindent

In this section
we shall prove Theorems \ref{thm:main} and \ref{thm:main2}
 which are reduced
to establishing (\ref{eq:080816-1}).

\subsection{Review of ${\rm Size}$ and ${\rm Count}$}
\noindent

\begin{definition}[\cite{Lacey,Lacey-Thiele,PrTe}]
\begin{enumerate}
\item
The density of a tile $s \in \D$
is defined by
\[
{\rm dense}(s)
:=
\int_{E \cap N^{-1}[\omega_s]}
\left(1+\frac{|x-c(I_s)|}{\ell(I_s)}\right)^{-20n}\,\frac{dx}{|I_s|}
\le
\left(\frac{2}{19}\right)^n.
\]
\item
Define
$\displaystyle
{\rm size}(\T_0)
:=
\left(\sum_{s \in \T_0}
\frac{|\langle f,\varphi_s \rangle_{L^2}|^2}{|I_t|}
\right)^\frac{1}{2}
$
for an $i$-tree $(\T_0,t)$ with $2 \le i \le 2^n$.
\end{enumerate}
\end{definition} 

\begin{definition}[\cite{Lacey,Lacey-Thiele,PrTe}]
Let $\P$ be a subset of $\D$.
Then define
\begin{align*}
{\rm Dense}(\P)
&:=
\sup_{s \in \P}{\rm dense}(s)\\
{\rm Size}(\P)
&:=
\sup\left\{
{\rm size}(\T_0)
\, : \, \T_0 \subset \P, \,
(\T_0,t) \mbox{ is an $i$-tree with $2 \le i \le 2^n$}
\right\}\\
{\rm Count}(\P)
&:=
\inf
\left\{
\sum_{j=1}^{J_0}|I_{t_j}|
\, : \, 
\mbox{each } (\T_j,t_j) \mbox{ is a tree and }
\P=\bigcup_{j=1}^{J_0}\T_j \mbox{ as a set }
\right\}.
\end{align*}
\end{definition}

We now invoke the following crucial lemmas.

\begin{lemma}{\rm \cite[Density lemma, Lemma 1]{PrTe}}
\label{lem:density}
There exists a constant $\alpha$
with the following property{\rm\,:}
Any finite subset $\T$
admits a partition such that
\[
\T=\T_{light} \coprod \T_{heavy}, \,
{\rm Dense}(\T_{light})\le\frac14{\rm Dense}(\T), \,
{\rm Count}(\T_{heavy}) \le \frac{\alpha}{{\rm Dense}(\T)}.
\]
\end{lemma}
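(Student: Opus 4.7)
Set $\delta := {\rm Dense}(\T)$ and split
\[
\T_{heavy} := \{s \in \T : {\rm dense}(s) > \delta/4\}, \qquad \T_{light} := \T \setminus \T_{heavy}.
\]
The bound ${\rm Dense}(\T_{light}) \le \delta/4$ is immediate from the definition of $\T_{light}$, so the entire task is to prove ${\rm Count}(\T_{heavy}) \lesssim 1/\delta$. My plan follows the strategy of Lemma~1 of \cite{PrTe}: first, reduce the weighted density condition to a Hardy--Littlewood maximal function statement; second, decompose $\T_{heavy}$ into trees through a Calder\'on--Zygmund stopping-time on the resulting level set.

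For the first step, I expand the weight $(1+|x-c(I_s)|/\ell(I_s))^{-20n}$ into dyadic annular shells $2^{k+1}I_s \setminus 2^k I_s$. Applying the pigeonhole principle to ${\rm dense}(s) > \delta/4$, I obtain for each $s \in \T_{heavy}$ an integer $k(s) \ge 0$ with
\[
|E \cap N^{-1}[\omega_s] \cap 2^{k(s)+1} I_s| \gtrsim \delta \cdot 2^{20 n k(s)} |I_s|.
\]
Since the $(20n)$-weighted average of $\chi_E$ is dominated pointwise by the Hardy--Littlewood maximal operator $M$, this forces $M\chi_E \gtrsim \delta$ throughout $I_s$; in particular, $I_s \subset E^{\ast} := \{M\chi_E > c\delta\}$. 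The weak-type $(1,1)$ inequality then gives $|E^{\ast}| \lesssim |E|/\delta \le 1/\delta$.

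For the second step, let $\{J_\alpha\}$ be the family of maximal dyadic cubes contained in $E^{\ast}$ (the Calder\'on--Zygmund cubes); they are pairwise disjoint with $\sum_\alpha |J_\alpha| = |E^{\ast}| \lesssim 1/\delta$. Since each $I_s$ for $s \in \T_{heavy}$ is a dyadic cube contained in $E^{\ast}$, it lies in a unique $J_{\alpha(s)}$. For each $\alpha$, I plan to organize the collection $\{s \in \T_{heavy} : I_s \subset J_\alpha\}$ into a family of trees with tops $t$ satisfying $\sum_t |I_t| \lesssim |J_\alpha|$, exploiting the nested dyadic structure of the frequency cubes $\omega_s$ together with the maximality of $J_\alpha$ inside $E^{\ast}$. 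Summing over $\alpha$ then yields
\[
{\rm Count}(\T_{heavy}) \lesssim \sum_\alpha |J_\alpha| \lesssim 1/\delta.
\]
The main obstacle is this last combinatorial assembly per $J_\alpha$, which must avoid paying a logarithmic factor in $\delta$; it is carried out in \cite{PrTe} via a careful scale-by-scale analysis of the $\omega_s$'s, and the argument transfers to our higher-dimensional setting without essential modification.
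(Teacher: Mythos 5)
Your reduction from the density hypothesis to the maximal-function level set is correct and is indeed the first half of the Pramanik--Terwilleger argument: the weight expansion and pigeonhole do give, for each $s\in\T_{heavy}$, a scale $k(s)$ at which $E\cap N^{-1}[\omega_s]$ occupies a $\gtrsim\delta$-fraction of $2^{k(s)+1}I_s$ (though your exponent $2^{20nk(s)}|I_s|$ should carry an extra decaying factor such as $2^{-k(s)}$ so that the pigeonhole is actually attainable), and from this $I_s\subset E^{*}:=\{M\chi_E>c\delta\}$ and $|E^{*}|\lesssim 1/\delta$ follow. The problem is the second half. You propose to take the maximal dyadic cubes $J_\alpha$ of $E^{*}$, assign each $s$ to the $J_\alpha$ containing $I_s$, and then build trees with $\sum_t|I_t|\lesssim|J_\alpha|$ \emph{per} $J_\alpha$. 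This per-$J_\alpha$ bound is not established and is not what \cite{PrTe} proves; in fact it is false as stated. Tiles $s,s'$ with $I_s=I_{s'}=J_\alpha$ and disjoint $\omega_s,\omega_{s'}$ can never share a tree top (a common top $t$ would require $\omega_t\subset\omega_s\cap\omega_{s'}=\emptyset$), and a computation with disjoint $N^{-1}[\omega_s]$ only bounds the number of such tiles by roughly $1/\delta$, so the top mass contributed by the single cube $J_\alpha$ can already be of order $|J_\alpha|/\delta$ rather than $|J_\alpha|$.

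The argument in \cite{PrTe} (following Lacey--Thiele) does not organize by Calder\'on--Zygmund cubes of $E^{*}$ at all. Instead, one takes the tiles $t$ that are maximal in $\T_{heavy}$ with respect to the partial order $\le$ as the tree tops, stratifies these maximal tiles by the pigeonhole scale $k(t)$, and for each fixed $k$ exploits two disjointness mechanisms simultaneously: disjointness of the sets $N^{-1}[\omega_t]$ for maximal tiles with distinct same-size frequency cubes, and bounded overlap of the dilated spatial cubes $2^{k}I_t$ for tiles sharing a frequency cube. Feeding the pigeonhole lower bound on $|E\cap N^{-1}[\omega_t]\cap 2^{k}I_t|$ into this overlap estimate gives $\sum_{t:\,k(t)=k}|I_t|\lesssim 2^{-\eps k}/\delta$, and summing in $k$ produces the claimed $\mathrm{Count}(\T_{heavy})\lesssim 1/\delta$ with no logarithmic loss. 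You correctly flagged the combinatorial assembly as the main obstacle, but deferring it to \cite{PrTe} ``via a careful scale-by-scale analysis per $J_\alpha$'' misdescribes that argument; the scale-by-scale analysis there is global over all maximal tiles and never passes through the $J_\alpha$'s. As written, the proposal therefore has a genuine gap at its central step.
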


\begin{lemma}{\rm \cite[Size lemma, Lemma 2]{PrTe}}
\label{lem:size}
There exists a constant $\beta$
with the following property{\rm\,:}
Any finite subset $\T$ admits a partition
such that
\[
\T=\T_{small} \coprod \T_{large}, \,
{\rm Size}(\T_{small})\le\frac12{\rm Size}(\T), \,
{\rm Count}(\T_{large}) \le \frac{\beta\| f \|_2{}^2}{{\rm Size}(\T)^2}.
\]
\end{lemma}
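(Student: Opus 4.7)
The plan is to execute a greedy tree-selection argument in the spirit of Lacey--Thiele. Set $\sigma:=\tfrac12{\rm Size}(\T)$. I would extract trees from $\T$ iteratively: as long as the current remnant contains an $i$-tree $(\T',t')$ with $2\le i\le 2^n$ and ${\rm size}(\T')>\sigma$, I select one whose frequency cube $\omega_{t'(i)}$ is extremal under a fixed tie-breaking rule (say lexicographically among the cubes of minimal side length), enlarge $\T'$ to a full tree $(\T_j,t_j)$ by adjoining every surviving tile $s\le t_j$, remove $\T_j$ from consideration, and iterate. The procedure terminates after finitely many steps $J_0$ because $\T$ is finite. The remaining subset $\T_{small}$ contains, by construction, no $i$-tree of size exceeding $\sigma$, so ${\rm Size}(\T_{small})\le\sigma=\tfrac12{\rm Size}(\T)$. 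Setting $\T_{large}:=\bigcup_{j=1}^{J_0}\T_j$ gives ${\rm Count}(\T_{large})\le\sum_{j=1}^{J_0}|I_{t_j}|$.

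Next I would bound $\sum_j|I_{t_j}|$. Each $(\T_j,t_j)$ is an $i_j$-tree with ${\rm size}(\T_j)>\sigma$, hence
\[
\sigma^2|I_{t_j}|<\sum_{s\in\T_j}|\langle f,\varphi_s\rangle_{L^2}|^2,
\]
and it suffices to establish the Bessel-type inequality
\[
\sum_{j=1}^{J_0}\sum_{s\in\T_j}|\langle f,\varphi_s\rangle_{L^2}|^2 \lesssim \|f\|_2^2.
\]
The selection rule forces the cubes $\{\omega_{t_j(i_j)}\}_j$ to be pairwise disjoint: if two of them intersected, dyadic nesting would make one contain the other, and the $i$-tree relation would have swept the later-selected top into the earlier tree at that earlier step, contradicting the greedy choice. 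Combined with the Fourier-support localization (\ref{eq:080823-2}) and the $i_j$-tree inclusion $\omega_{t_j(i_j)}\subset\omega_{s(i_j)}$ for $s\in\T_j$, this disjointness reduces the Bessel inequality to a fibrewise application of Lemma~\ref{lem:080827-1} in $\xi\in\bigsqcup_j\omega_{t_j(i_j)}$, followed by integration.

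The principal obstacle is making the last reduction rigorous: the tile functions $\varphi_s$ for $s\in\T_j$ have Fourier support in $\tfrac15\omega_{s(1)}$ rather than directly in $\omega_{t_j(i_j)}$, so the ``orthogonality'' across trees is only almost-orthogonality, and one must control the off-diagonal entries of the Gram matrix via a Schur test exploiting the rapid Schwartz decay of $\varphi_s$ together with the disjointness of the tops (a Cotlar--Stein style computation). Once the Bessel inequality is in hand, one concludes
\[
\sum_j|I_{t_j}| \le \frac{1}{\sigma^2}\sum_j\sum_{s\in\T_j}|\langle f,\varphi_s\rangle_{L^2}|^2 \lesssim \frac{\|f\|_2^2}{{\rm Size}(\T)^2},
\]
which furnishes the absolute constant $\beta$.
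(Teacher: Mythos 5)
The paper does not actually prove this lemma; it is imported verbatim as Lemma~2 of \cite{PrTe}, so there is no internal proof to compare against. Judged on its own, your sketch reproduces the correct greedy-selection skeleton, but the orthogonality input you invoke does not hold, and the ``obstacle'' you flag at the end is in fact the entire content of the proof, resting on a disjointness claim that is false.

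The assertion that the greedy rule forces the frequency cubes $\{\omega_{t_j(i_j)}\}_j$ of the selected tops to be pairwise disjoint does not follow. Suppose $\omega_{t_k(i_k)}\subsetneq\omega_{t_j(i_j)}$ with $j<k$. Absorbing $t_k$ into $\T_j$ at step $j$ would require $t_k\le t_j$, i.e.\ $I_{t_k}\subset I_{t_j}$ \emph{and} $\omega_{t_k}\supset\omega_{t_j}$; the frequency nesting alone gives neither. Nothing in the selection rule prevents $I_{t_k}$ from lying far from $I_{t_j}$, so $t_k$ and the tiles below it can easily survive the removal of $\T_j$, and the two tops' frequency cubes may well overlap. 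What the extremal selection actually delivers in Lacey--Thiele and Pramanik--Terwilleger is \emph{strong disjointness} of the union of selected $i$-trees: roughly, if $s\in\T_j$, $s'\in\T_{j'}$, $j\ne j'$, and $\omega_s\subsetneq\omega_{s'}$, then $I_{s'}\cap I_{t_j}=\emptyset$. This is a mixed space--frequency condition, considerably weaker than disjointness of the tops, and it is the ingredient the Bessel inequality genuinely needs.

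Even granting your disjointness claim, the proposed ``fibrewise Lemma~\ref{lem:080827-1}'' reduction does not parse. That lemma bounds the sum over tiles with $\omega_{s(2^n)}\ni\xi$, whereas the $i$-tree condition involves $\omega_{t_j(i_j)}\subset\omega_{s(i_j)}$ for a (possibly) different child $i_j$; and the quantity $\sum_j\sum_{s\in\T_j}|\langle f,\varphi_s\rangle|^2$ is a single number rather than a function of $\xi$, so there is no ``followed by integration'' step available. The actual Bessel estimate comes from a $TT^*$ argument: one writes
\[
\sum_{j}\sum_{s\in\T_j}|\langle f,\varphi_s\rangle_{L^2}|^2
\le\|f\|_2\,\Big\|\sum_{j}\sum_{s\in\T_j}\langle f,\varphi_s\rangle_{L^2}\,\varphi_s\Big\|_2,
\]
and controls the right-hand side by a Schur test on the Gram matrix, where the off-diagonal decay is produced precisely by strong disjointness combined with the spatial decay of $\varphi_s$ and the Fourier localization (\ref{eq:080823-2}). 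That computation is not a loose end to patch afterwards; it is essentially the whole size lemma, and without the correct disjointness statement (in place of the top-disjointness you assert) the Schur test cannot be set up.
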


If we combine the density and the size lemma,
we obtain the following.
\begin{corollary}[{\rm \cite{Grafakos,PrTe}}]
\label{cor:density-size}
Any finite subset $\P \subset \D$
admits the following decomposition{\rm\,:}
\begin{enumerate}
\item
$\displaystyle \P=\coprod_{j=-\infty}^\infty \P_j$
\item
Set
$\displaystyle
{\mathbb U}_j:=\P \setminus \coprod_{k=j}^\infty \P_k.
$
Then we have
\begin{align*}
{\rm Dense}
\left({\mathbb U}_j\right) 
&\le 4^j\\
{\rm Size}
\left({\mathbb U}_j\right) 
&\le 2^j\| f \|_2.
\end{align*}
\item
${\rm Count}\left(\P_j\right) 
\le (\alpha+\beta)4^{-j}$.
\end{enumerate}
Here the constants $\alpha$ and $\beta$ are from Lemmas
\ref{lem:density} and \ref{lem:size}
respectively.
\end{corollary}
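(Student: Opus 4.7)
The plan is to construct the sets $\P_j$ by downward induction on $j$, peeling off a tree cover at each level by applying Lemmas \ref{lem:density} and \ref{lem:size} in succession. Since $\P$ is finite, both ${\rm Dense}(\P)$ and ${\rm Size}(\P)/\| f \|_2$ are finite, so I can fix $j_0 \in \Z$ with ${\rm Dense}(\P) \le 4^{j_0+1}$ and ${\rm Size}(\P) \le 2^{j_0+1}\| f \|_2$. Declaring $\P_k := \emptyset$ for all $k > j_0$ ensures that ${\mathbb U}_{j_0+1} = \P$ satisfies the induction hypothesis ${\rm Dense}({\mathbb U}_{j+1}) \le 4^{j+1}$ and ${\rm Size}({\mathbb U}_{j+1}) \le 2^{j+1}\| f \|_2$.

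Given ${\mathbb U}_{j+1}$ obeying this hypothesis, I would produce ${\mathbb U}_j$ and $\P_j := {\mathbb U}_{j+1} \setminus {\mathbb U}_j$ in two steps. First apply Lemma \ref{lem:density} to ${\mathbb U}_{j+1}$ to get a partition ${\mathbb U}_{j+1} = L \sqcup H_D$ with ${\rm Dense}(L) \le \tfrac{1}{4}{\rm Dense}({\mathbb U}_{j+1}) \le 4^j$ and ${\rm Count}(H_D) \le \alpha/{\rm Dense}({\mathbb U}_{j+1})$. Then apply Lemma \ref{lem:size} to $L$ to get $L = {\mathbb U}_j \sqcup H_S$ with ${\rm Size}({\mathbb U}_j) \le \tfrac{1}{2}{\rm Size}(L) \le 2^j\| f \|_2$ and ${\rm Count}(H_S) \le \beta\| f \|_2{}^2/{\rm Size}(L)^2$. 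Since both ${\rm Dense}$ and ${\rm Size}$ are monotone under inclusion (each is a supremum over a larger index set), ${\rm Dense}({\mathbb U}_j) \le {\rm Dense}(L) \le 4^j$ as required, and by the sub-additivity of ${\rm Count}$ (concatenate tree covers) one obtains ${\rm Count}(\P_j) \le {\rm Count}(H_D) + {\rm Count}(H_S)$.

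The one bookkeeping subtlety is that the density lemma delivers the bound $\alpha/{\rm Dense}({\mathbb U}_{j+1})$, which is $\le \alpha \cdot 4^{-j}$ only when ${\rm Dense}({\mathbb U}_{j+1}) \ge 4^j$. When instead ${\rm Dense}({\mathbb U}_{j+1}) < 4^j$, the density target on ${\mathbb U}_j$ is automatic, so I simply skip Lemma \ref{lem:density} and set $L := {\mathbb U}_{j+1}$, $H_D := \emptyset$; an identical dodge handles Lemma \ref{lem:size} when ${\rm Size}(L) < 2^j\| f \|_2$. With these conventions one concludes ${\rm Count}(\P_j) \le (\alpha+\beta)\cdot 4^{-j}$. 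Since $\P$ is finite, ${\mathbb U}_j$ becomes empty once $j$ is sufficiently small, so only finitely many $\P_j$ are nonempty and they partition $\P$. I do not anticipate any genuine obstacle, as Lemmas \ref{lem:density} and \ref{lem:size} are stated precisely for this iteration.
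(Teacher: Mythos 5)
Your proof is correct and follows essentially the same downward induction as the paper: fix $j_0$ so the initial bounds hold, peel off tiles level by level via Lemmas \ref{lem:density} and \ref{lem:size}, and dodge whichever lemma is unnecessary at a given level so that the reciprocal ${\rm Count}$ bounds $\alpha/{\rm Dense}$ and $\beta\|f\|_2^2/{\rm Size}^2$ land below the target $(\alpha+\beta)4^{-j}$. The only cosmetic difference is bookkeeping: you apply the density lemma to ${\mathbb U}_{j+1}$ and then the size lemma to the resulting light collection $L$, whereas the paper applies both lemmas to the same collection ${\mathbb U}_j$ and then takes $\P_{j-1}$ to be the union of the heavy and large pieces, with a four-way case split on which bounds already hold; monotonicity of ${\rm Dense}$ and ${\rm Size}$ under inclusion and subadditivity of ${\rm Count}$ make both variants work and yield the same estimate.
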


Although the proof is essentially contained in \cite{Grafakos,PrTe},
we outline the proof
for the sake of convenience for the readers.
\begin{proof}
Assume that
$j_0$ is large enough so that
\[
{\rm Dense}(\P) \le 4^{j_0}, \,
{\rm Size}(\P) \le 2^{j_0}\| f \|_2.
\]
We define
$\P_j:=\emptyset$
for $j \ge j_0$.
Assume that
$\P_k, \, k \ge j$ is defined so 
that 
\[
{\rm Dense}
\left({\mathbb U}_j\right)
\le 4^j, \,
{\rm Size}
\left({\mathbb U}_j\right)
\le 2^j\| f \|_2.
\]
We define $\P_{j-1}$
as follows\,:
Using Lemmas \ref{lem:density} and \ref{lem:size},
we define
\begin{align*}
\lefteqn{
\P_{j-1}
}\\
&:=
\left\{
\begin{array}{ll}
\emptyset 
& \mbox{, if }
{\rm Dense}({\mathbb U}_j) \le 4^{j-1}
\mbox{ and }
{\rm Size}({\mathbb U}_j)  \le 2^{j-1}\| f \|_2\\
({\mathbb U}_j)_{large}
& \mbox{, if }
{\rm Dense}({\mathbb U}_j) \le 4^{j-1}
\mbox{ and }
{\rm Size}({\mathbb U}_j) > 2^{j-1}\| f \|_2\\
({\mathbb U}_j)_{heavy} 
& \mbox{, if } 
{\rm Dense}({\mathbb U}_j) > 4^{j-1}
\mbox{ and }
{\rm Size}({\mathbb U}_j) \le 2^{j-1}\| f \|_2\\
({\mathbb U}_j)_{heavy} \cup ({\mathbb U}_j)_{large}
& \mbox{, if } 
{\rm Dense}({\mathbb U}_j) > 4^{j-1}
\mbox{ and }
{\rm Size}({\mathbb U}_j) > 2^{j-1}\| f \|_2.
\end{array}
\right.
\end{align*}
By Lemmas \ref{lem:density} and \ref{lem:size},
we see that 
${\rm Count}\left(\P_{j-1}\right) 
 \le 4(\alpha+\beta) \cdot 4^{-j}$
in any case.
\end{proof}

This is a quick review of the culmination
which will used for the proof 
of Theorems \ref{thm:main} and \ref{thm:main2}
in the present paper.

To prove (\ref{eq:080816-1}),
it suffices to establish
that
\begin{theorem}
\label{thm:main3}
There exists $\gamma$ with the following property{\rm\,:}
Let $(\T,t)$ be a tree.
Then we have
\begin{equation}
\label{eq:080816-2}
{\rm Sum}(\T)
\le \gamma{\rm Dense}(\T){\rm Size}(\T)|I_t|.
\end{equation}
In particular
\begin{equation}
\label{eq:080816-3}
{\rm Sum}(\P)
\le \gamma{\rm Dense}(\P){\rm Size}(\P){\rm Count}(\P).
\end{equation}
\end{theorem}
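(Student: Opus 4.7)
The implication (\ref{eq:080816-2}) $\Rightarrow$ (\ref{eq:080816-3}) is immediate from subadditivity of ${\rm Sum}$ and the monotonicity ${\rm Dense}(\T_j) \le {\rm Dense}(\P)$, ${\rm Size}(\T_j) \le {\rm Size}(\P)$ valid for any subcollection: given a tree decomposition $\P = \bigcup_{j=1}^{J_0}\T_j$ with tops $t_j$, sum (\ref{eq:080816-2}) over $j$ and take the infimum over all such decompositions. Thus the entire burden is on the tree estimate (\ref{eq:080816-2}).

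For the tree estimate itself, the plan is to partition $(\T,t)$ by the ``selector index''. For each $s \in \T$ with $\omega_s \supsetneq \omega_t$, dyadic nesting forces a unique $i(s) \in \{1,\dots,2^n\}$ with $\omega_t \subset \omega_{s(i(s))}$; writing $\T^{(i)} := \{s \in \T : i(s) = i\}$ (and absorbing the trivial case $\omega_s = \omega_t$ into $\T^{(1)}$), each $(\T^{(i)}, t)$ is an $i$-tree. For $i \ge 2$ the Fourier supports $\tfrac{1}{5}\omega_{s(1)}$ of the $\varphi_s$ are lacunary and ${\rm size}(\T^{(i)}) \le {\rm Size}(\T)$ is directly available, whereas $i = 1$ is the non-lacunary case.

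For each $s$ I will invoke Proposition \ref{prop:Cotlar} with $u=s$, $v=t$ and $\eta_0 = \eta_1 = N(x) \in \omega_{s(2^n)}\subset \omega_t$ to replace $\psi_s^{N(x)}(x) = a_{s,N(x)}(x,D)\varphi_s(x)$ by $a_{t,N(x)}(x,D)\varphi_s(x)$, with error pointwise dominated by $M_{\ge \ell(I_s)}\varphi_s$ plus a truncated singular integral of $\varphi_s$. Using the pointwise decay in Lemma \ref{lem:080824-7}, the maximal truncated-SIO estimate (\ref{eq:080823-3}), and the density bound, the aggregate error contribution to ${\rm Sum}(\T^{(i)})$ is majorised by ${\rm Dense}(\T)\cdot{\rm Size}(\T)\cdot|I_t|$. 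With the operator frozen at the top, the case $i \ge 2$ then closes by Cauchy--Schwarz: one factor gives $({\rm size}(\T^{(i)}))^2|I_t| \le {\rm Size}(\T)^2|I_t|$ by the very definition of size, while the other combines the density bound with the almost-orthogonality of the frequency-lacunary $\varphi_s$ (via Lemma \ref{lem:080827-1}) to produce a factor of ${\rm Dense}(\T)^2|I_t|$.

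The main obstacle is the $1$-tree $\T^{(1)}$, where the wave packets have overlapping Fourier supports and size is unavailable. My plan is to reassemble the sum into a single coherent block $F_1 := \sum_{s \in \T^{(1)}}\langle f,\varphi_s\rangle_{L^2}\varphi_s$, using linearity to rewrite
\[
\sum_{s\in\T^{(1)}}\langle f,\varphi_s\rangle_{L^2}\,a_{t,N(x)}(x,D)\varphi_s(x) = a_{t,N(x)}(x,D)F_1(x).
\]
The $L^2$-boundedness of $a(x,D)$, together with (\ref{eq:080823-3}), the bound on $\|F_1\|_2$ from Lemma \ref{lem:080827-1}, and the density estimate controlling the integration against $\chi_E$, should then close the argument. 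The technical subtlety is that $a_{t,N(x)}(x,D)$ depends on $x$ through $N(x)$, so an additional Cotlar-type comparison is required to pass from the tree-top frequency cutoff to the genuine truncated operator; this is precisely the step where the time-translation-free decomposition from Section \ref{section:phase decomposition} becomes decisive.
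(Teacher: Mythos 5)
Your reduction of (\ref{eq:080816-3}) to (\ref{eq:080816-2}) is correct. For the tree estimate itself, however, your plan diverges from what actually works, and the divergences are fatal rather than merely cosmetic.

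First, you never introduce the spatial grid ${\cal J}(\T)$ of Lemma \ref{lem:3-2}. In the paper the sum over $s \in \T$ is split, for each $J \in {\cal J}(\T)$, into the tiles with $|I_s| \le 2^n|J|$ (handled by the pointwise decay of Lemma \ref{lem:080824-7}, exactly as in \cite{PrTe}) and the tiles with $|I_s| > 2^n|J|$. This scale-versus-grid split, not the selector-index split into $i$-trees $\T^{(i)}$, is the organizing device. The $i$-tree concept is used only in the \emph{definition} of Size; it does not reappear in the proof of the tree estimate. Without ${\cal J}(\T)$ you have no way to localize the truncated singular integral and no way to exploit the density bound $\left|J \cap \bigcup_{|I_s|>2^n|J|}E_{s(2^n)}\right| \lesssim {\rm Dense}(\T)|J|$.

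Second, and more seriously, you propose to apply Proposition \ref{prop:Cotlar} with $\eta_0 = \eta_1 = N(x)$. This nullifies the proposition. The entire reason Corollary \ref{cor:Cotlar8} allows $\eta_0 \ne \eta_1$ is so that one can take $\eta_0 = N(x)$ (the linearizing measurable frequency, which varies with $x$) but $\eta_1 = c(\omega_\T)$ (a \emph{fixed} frequency determined only by the tree). The dominating error term $\sup_{\varepsilon>0}\left|a(x,D-\eta_1)[\chi_{\R^n\setminus Q(z,\varepsilon)}F_1](z)\right|$ is then controlled by (\ref{eq:080823-3}), the $L^2$-bound for the maximal truncated singular integral associated to one \emph{fixed} symbol. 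If $\eta_1 = N(x)$ the symbol varies with the point of evaluation, (\ref{eq:080823-3}) does not apply, and the supremum over $\xi$ that you were trying to remove reappears inside the error term.

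Third, you apply the Cotlar comparison term by term to individual $\varphi_s$ and only reassemble into a coherent $F_1$ afterwards (and only over $\T^{(1)}$). The paper applies it once, to the already-assembled $F_{2,J}(x) = \sum_{|I_s|>2^n|J|}\chi_{J\cap E_{s(2^n)}}(x)\alpha_s\langle f,\varphi_s\rangle_{L^2}\psi_s^{N(\cdot)}(x)$. The geometric lemma about $\omega_\pm(x;J)$ shows that this whole block equals $\bigl(a_{\xi,c(\omega_+),\ell(\omega_+)}(x,D) - a_{\xi,c(\omega_-),\ell(\omega_-)}(x,D)\bigr)F_1(x)\big|_{\xi=N(x)}$ with $F_1 := \sum_{s\in\T}\alpha_s\langle f,\varphi_s\rangle_{L^2}\varphi_s$ ranging over the \emph{entire} tree, not just the $1$-tree. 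That coherence is what lets one invoke a single $L^2$-bound $\|F_1\|_2^2 \lesssim |I_t|\,{\rm Size}(\T)^2$ (Lemma \ref{lem:080824-6}) and Cauchy--Schwarz over $J$. Summing $M_{\ge\ell(I_s)}\varphi_s$ and truncated singular integrals of individual $\varphi_s$ over $s$, as your plan entails, does not produce the ${\rm Dense}(\T)\,{\rm Size}(\T)\,|I_t|$ bound.
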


We remark that (\ref{eq:080816-3})
is an immediate consequence of (\ref{eq:080816-2}).
Indeed, to obtain (\ref{eq:080816-3})
we have only to decompose $\P$ into a sequence
of trees and add (\ref{eq:080816-2}) over such trees.
Furthermore,
once we obtain (\ref{eq:080816-3}),
we have
\begin{align*}
{\rm Sum}(\P_j)
\le
4\gamma(\alpha+\beta)\| f \|_2
\min(2^{-j},2^{j})
\end{align*}
under the notation in Corollary \ref{cor:density-size}.
This inequality is summable over $j \in \Z$
to yield Theorem \ref{thm:main2} and hence Theorem \ref{thm:main}.

By linearization,
(\ref{eq:080816-2}) amounts to establishing
\begin{equation}
\label{eq:080816-4}
\left|
\sum_{s \in \T}
\alpha_s
\langle f,\varphi_s \rangle_{L^2}
\cdot
\langle \psi_s^{N(\cdot)},\chi_{E_{s(2^n)}}\rangle_{L^2}
\right|
\lesssim
{\rm Dense}(\T){\rm Size}(\T)|I_t|
\end{equation}
for all sequences $\{\alpha_s\}_{s \in \T}
\subset \Delta(1):=\{z \in \C \, : \, |z|<1 \}$.

\subsection{Partition ${\cal J}(\T)$ of $\R^n$
and further reduction}
\noindent

To proceed, we consider a partition of $\R^n$ 
associated with a tree $\T$.

\begin{lemma}[\cite{Lacey-Thiele,PrTe}]
\label{lem:3-2}
Suppose that $\T$ is a tree.
Define
\[
{\cal J}_0(\T)
:=
\left\{
Q \in {\cal D} \, : \,
I_s \mbox{ is not contained in }3Q
\mbox{ for all } s \in \T
\right\}
\]
and ${\cal J}(\T)$ as the subfamily
which is made up of all cubes maximal with respect to inclusion.
Then ${\cal J}(\T)$ is a partition of $\R^n$.
\end{lemma}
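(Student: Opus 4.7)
The plan is to verify two things: every point $x \in \R^n$ is contained in some maximal element of ${\cal J}_0(\T)$, and distinct maximal elements are pairwise disjoint. Assuming $\T \ne \emptyset$ (otherwise the statement is vacuous), both steps reduce to routine dyadic cube book-keeping.

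For the existence of an element of ${\cal J}_0(\T)$ containing $x$, I would exploit the finiteness of $\T$ by setting $\ell_{\min} := \min_{s \in \T}\ell(I_s) > 0$. Any dyadic cube $Q$ with $3\ell(Q) < \ell_{\min}$ automatically lies in ${\cal J}_0(\T)$, because no $I_s$ can be contained in a cube of strictly smaller side length; in particular the unique dyadic cube containing $x$ with sufficiently small side length belongs to ${\cal J}_0(\T)$. For the existence of a maximal such cube, I would fix any $s_0 \in \T$ and observe that every dyadic cube $Q$ with $3Q \supset I_{s_0}$ is excluded from ${\cal J}_0(\T)$. Since the nested family of dyadic cubes containing $x$ exhausts $\R^n$ as the generation goes to $-\infty$, eventually $3Q \supset I_{s_0}$, so the chain of cubes in ${\cal J}_0(\T)$ containing $x$ is bounded above. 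It therefore has a unique maximal element, which is the element of ${\cal J}(\T)$ containing $x$.

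For disjointness of distinct maximal elements I would invoke the standard nesting dichotomy for dyadic cubes: two are either disjoint or one is contained in the other. Two distinct maximal elements of ${\cal J}_0(\T)$ cannot be comparable under inclusion without contradicting maximality, so they must be disjoint. Combined with the previous paragraph, this shows each $x \in \R^n$ lies in exactly one cube of ${\cal J}(\T)$, which is the partition claim.

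No real obstacle is anticipated; the only care required is to set the lower and upper thresholds $\ell_{\min}$ and $I_{s_0}$ correctly so that the chain of candidate dyadic cubes around each $x$ is both nonempty and bounded above.
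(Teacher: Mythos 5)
Your overall strategy is sound, and the paper itself offers no proof of this lemma beyond the remark that it follows from the maximality of ${\cal J}(\T)$, so there is no alternative approach to compare against. But there is a slip in the step that bounds the chain of candidate cubes: the nested family of dyadic cubes containing a fixed $x$ does \emph{not} exhaust $\R^n$ as the generation decreases. For example, if all coordinates of $x$ are strictly positive, every dyadic cube containing $x$ lies in $[0,\infty)^n$; in general the ascending chain stabilizes inside a single orthant determined by the signs of the coordinates of $x$. What is true, and what you actually need, is that the dilates $3Q$ eventually swallow the fixed cube $I_{s_0}$: if $Q$ is a dyadic cube containing $x$, then $c(Q)$ lies within $\ell(Q)/2$ of $x$ in each coordinate, hence $3Q = Q(c(Q),\tfrac{3}{2}\ell(Q)) \supset Q(x,\ell(Q))$, and as $\ell(Q)\to\infty$ the latter contains any fixed bounded set, in particular $I_{s_0}$. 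With that correction the chain of cubes in ${\cal J}_0(\T)$ containing $x$ is indeed bounded above, and the rest of your argument --- existence of a unique maximal element through each $x$, plus disjointness of incomparable dyadic cubes --- goes through as written. The lower bound via $\ell_{\min}$ is fine as stated.
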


It is not so hard to prove Lemma \ref{lem:3-2}
by using the maximality of ${\cal J}(\T)$.
Along with this partition,
(\ref{eq:080816-4}) can be decomposed into
\begin{align}
\label{eq:080816-5}
\sum_{J \in {\cal J}(\T)}
\left|
\sum_{s \in \T, \, |I_s|\le 2^n|J|}
\alpha_s
\langle f,\varphi_s \rangle_{L^2}
\cdot
\int_{J \cap E_{s(2^n)}}\psi_s^{N(\cdot)}(x)\,dx
\right|
&\lesssim
{\rm Dense}(\T){\rm Size}(\T)|I_t|\\
\label{eq:080816-6}
\sum_{J \in {\cal J}(\T)}
\left|
\sum_{s \in \T, \, |I_s|>2^n|J|}
\alpha_s
\langle f,\varphi_s \rangle_{L^2}
\cdot
\int_{J \cap E_{s(2^n)}}\psi_s^{N(\cdot)}(x)\,dx
\right|
&\lesssim
{\rm Dense}(\T){\rm Size}(\T)|I_t|.
\end{align}
Keeping Lemma \ref{lem:080824-7} in mind,
we can prove  (\ref{eq:080816-5}) completely
analogously to the corresponding part in \cite{PrTe}.
So, we omit the details.
For the proof of (\ref{eq:080816-6})
we need to utilize our simplified phase decomposition
formula.
Now we invoke the following result in \cite{PrTe}.
\begin{lemma}{\rm \cite[p795]{PrTe}}
$\displaystyle
\left|J \cap \bigcup_{s \in \T, \, |I_s|>2^n|J|}E_{s(2^n)}\right|
\lesssim
{\rm Dense}(\T)|J|.
$
\end{lemma}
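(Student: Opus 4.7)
The plan is to exploit a scale-wise decomposition enabled by the tree structure. For each $s \in \T$ with $s \le t$, the condition $\omega_s \supset \omega_t$ combined with the dyadicity of $\omega_s$ determines $\omega_s$ uniquely from the scale $\nu$ (defined by $\ell(I_s) = 2^{-\nu}$): it is the unique dyadic cube of sidelength $2^\nu$ containing $\omega_t$. Consequently the quadrant $\omega_{s(2^n)}$ also depends only on $\nu$; writing $\omega_\nu^*$ for this common value, I obtain
\[
\bigcup_{s \in \T,\,|I_s|>2^n|J|} E_{s(2^n)}
\;=\;\bigcup_{\nu \in {\cal N}} \bigl( E \cap N^{-1}[\omega_\nu^*]\bigr),
\]
where ${\cal N}$ is the finite set of scales $\nu$ with $2^{-\nu} \ge 4\ell(J)$ at which $\T$ contains a tile.

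I would next estimate each scale's contribution $|J \cap E \cap N^{-1}[\omega_\nu^*]|$ by summing the density estimate over \emph{all} tiles of scale $\nu$ in $\T$:
\[
\int_{E \cap N^{-1}[\omega_\nu^*]} \sum_{\substack{s \in \T\\ \ell(I_s)=2^{-\nu}}} \Bigl(1 + \tfrac{|y-c(I_s)|}{2^{-\nu}}\Bigr)^{-20n}\,dy
\;\le\;{\rm Dense}(\T)\cdot |I_t|,
\]
the right side coming from the disjointness of the $I_s$ within $I_t$. The inner weighted sum behaves as a partition of unity: it is bounded above by a dimensional constant, and bounded below by a positive constant on any dyadic cube near the union of the $I_s$'s at scale $\nu$. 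The maximality of $J$ in ${\cal J}(\T)$---which forces the parent $\hat J$ to contain some $I_{s_0}$ with $s_0 \in \T$ of sidelength at most a constant times $\ell(J)$---propagates dyadically upward to provide, at each scale $\nu \in {\cal N}$, a tile in $\T$ whose $I_s$ lies within $O(\ell(I_s))$ of $J$. Inserting this lower bound yields
\[
\bigl|J \cap E \cap N^{-1}[\omega_\nu^*]\bigr|
\;\lesssim\;{\rm Dense}(\T)\cdot|J|.
\]

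Finally, I would sum over $\nu \in {\cal N}$. The cubes $\{\omega_\nu^*\}_\nu$ are laminar by dyadicity---any two are nested or disjoint---so summing over a nested chain yields only a geometric series dominated by the finest-scale contribution, while disjoint branches do not overlap and preserve the $|J|$ bound. The principal obstacle is the middle step: a naive application of the density bound to any single tile at scale $\nu$ yields only ${\rm Dense}(\T)\cdot|I_s|$, which is far too weak since $|I_s| \gg |J|$. Recovering the sharp $|J|$ factor requires both the partition-of-unity-type lower bound on the summed weights and the geometric control on the location of $I_s$ relative to $J$ coming from the maximality of $J$ in ${\cal J}(\T)$.
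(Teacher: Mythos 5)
The crucial step fails. Even granting the partition-of-unity lower bound on $J$, the density estimate for tiles at scale $\nu$ only gives $|J\cap E\cap N^{-1}[\omega_\nu^*]|\lesssim\sum_{s}{\rm dense}(s)\,|I_s|\lesssim{\rm Dense}(\T)\cdot 2^{-\nu n}$, the sum running over the $O(1)$ tiles of $\T$ at scale $\nu$ near $J$; but $2^{-\nu n}=|I_s|\gg|J|$, and this ratio is never recouped. The partition-of-unity lower bound merely converts the \emph{weight} into a constant on $J$; it does not replace the built-in normalization $|I_s|^{-1}$ of ${\rm dense}(s)$ by $|J|^{-1}$. Moreover, the claim that maximality of $J$ ``propagates dyadically upward'' to supply, at each scale $\nu\in{\cal N}$, a tile of $\T$ near $J$ is unjustified: maximality only produces the single tile $s_0$ with $I_{s_0}\subset 3\hat J$, and since a tree is an arbitrary finite family of tiles below a common top, its tiles at an intermediate scale may all lie far from $J$. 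Because $E\cap N^{-1}[\omega_\nu^*]$ is determined purely by the frequency side (and in a tree the frequency cube is fixed by the scale), it does not shrink when the spatial cubes at scale $\nu$ move away from $J$, so the density of those far-away tiles gives no control on $|J\cap E\cap N^{-1}[\omega_\nu^*]|$.

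The argument of \cite{PrTe} uses a single auxiliary anchor instead: take the tile $u$ with $I_u$ the dyadic ancestor of $I_{s_0}$ of sidelength $\simeq\ell(J)$ and $\omega_u$ the dyadic ancestor of $\omega_t$ of sidelength $\ell(I_u)^{-1}$. Then $u\ge s_0$, $I_u$ lies within $O(\ell(J))$ of $J$, and $\omega_u\supset\omega_s$ for every $s\in\T$ with $|I_s|>2^n|J|$, so the whole union $\bigcup_s E_{s(2^n)}$ is contained in $E\cap N^{-1}[\omega_u]$; the density integral at $u$ then gives the required $\lesssim|I_u|\simeq|J|$ bound in one stroke. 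To conclude one needs that density integral to be dominated by ${\rm Dense}(\T)$; in \cite{PrTe} this holds because ${\rm dense}(s)$ is there \emph{defined} as a supremum of such integrals over all tiles $u\ge s$, whence $u\ge s_0$ gives the bound. With the simplified, non-supremum definition of ${\rm dense}(s)$ recorded in the present paper this monotonicity fails, neither your argument nor this one closes, and one can in fact construct a tree, a set $E$ and a choice function $N$ for which the stated inequality is false. The obstacle you identify at the end of your proposal is genuine, but the repair lies in the stronger density definition of \cite{PrTe}, not in a sharper partition-of-unity estimate.
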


\subsection{Conclusion of the proof of Theorem \ref{thm:main2}}
\noindent

To establish (\ref{eq:080816-6}),
we obtain a pointwise estimate of
\[
\sum_{s \in \T, \, |I_s|>2^n|J|}
\chi_{J \cap E_{s(2^n)}}(x)
\alpha_s
\langle f,\varphi_s \rangle_{L^2}
\psi_s^{N(\cdot)}(x).
\]
Below let us
set
\begin{align*}
F_1(x)
&:=
\sum_{s \in \T}
\alpha_s
\langle f,\varphi_s \rangle_{L^2}
\varphi_s(x)\\
F_{2,J}(x)
&:=
\sum_{s \in \T, \, |I_s|>2^n|J|}
\chi_{J \cap E_{s(2^n)}}(x)
\alpha_s
\langle f,\varphi_s \rangle_{L^2}
\psi_s^{N(\cdot)}(x).
\end{align*}
The following lemma is easy to show
with the help of Lemma \ref{lem:080825-1}.
\begin{lemma}[\cite{PrTe}]
\label{lem:080824-6}
$\displaystyle
\int_{\R^n}|F_1(x)|^2\,dx
\lesssim
|I_t|{\rm Size}(\T)^2.
$
\end{lemma}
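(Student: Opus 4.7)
The plan is to expand $\|F_1\|_2^2$ as an inner product and apply Cauchy--Schwarz, treating one factor with the definition of $\mathrm{Size}$ and the other with the Bessel-type estimate dual to Lemma \ref{lem:080825-1}. Starting from
\[
\|F_1\|_2^2 = \sum_{s \in \T} \alpha_s \langle f, \varphi_s\rangle \overline{\langle F_1, \varphi_s\rangle},
\]
Cauchy--Schwarz together with $|\alpha_s|\le 1$ reduces matters to proving the two estimates
\[
A := \Bigl(\sum_{s\in\T}|\langle f,\varphi_s\rangle|^2\Bigr)^{1/2} \lesssim \sqrt{|I_t|}\,\mathrm{Size}(\T)
\quad\text{and}\quad
B := \Bigl(\sum_{s\in\T}|\langle F_1,\varphi_s\rangle|^2\Bigr)^{1/2} \lesssim \|F_1\|_2;
\]
cancelling one copy of $\|F_1\|_2$ then gives $\|F_1\|_2^2 \lesssim |I_t|\,\mathrm{Size}(\T)^2$.

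For the bound on $A$ I would partition $\T = \bigsqcup_{i=1}^{2^n}\T_i$ where $\T_i := \{s \in \T : \omega_t \subset \omega_{s(i)}\}$. The partition is disjoint because $\omega_t$ lies in exactly one sub-cube of each $\omega_s$, and each $\T_i$ is an $i$-tree with top $t$ since $\omega_{t(i)} \subset \omega_t \subset \omega_{s(i)}$ for every $s \in \T_i$. For $i \ge 2$ the definition of $\mathrm{size}$ yields immediately
\[
\sum_{s \in \T_i}|\langle f,\varphi_s\rangle|^2 = |I_t|\,\mathrm{size}(\T_i)^2 \le |I_t|\,\mathrm{Size}(\T)^2,
\]
so summing over $i \in \{2,\dots,2^n\}$ produces the desired bound on that part of $A$. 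The estimate on $B$ is a direct consequence of Lemma \ref{lem:080827-1}: after a similar decomposition that brings each sub-collection of $\T$ under the hypothesis $\omega_{s(2^n)} \ni \xi$ for some common $\xi$, the lemma (itself a dualised form of Lemma \ref{lem:080825-1}) controls $\sum_{s} |\langle F_1, \varphi_s\rangle|^2$ by $\|F_1\|_2^2$.

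The main obstacle is the $1$-sub-tree $\T_1$ in the estimate of $A$: the paper's notion of $\mathrm{size}$ is defined only for $i$-trees with $i \ge 2$, and for $s \in \T_1$ the frequency supports $\tfrac15 \omega_{s(1)}$ all contain a neighbourhood of $\omega_{t(1)}$, so the wave packets are not automatically decoupled. I would handle this piece by grouping the tiles of $\T_1$ by the dyadic scale $\ell(I_s)$: the cubes $\{\omega_{s(1)}\}_{s \in \T_1}$ form a nested family around $\omega_{t(1)}$, so the packets behave like a Littlewood--Paley system with a common low-frequency corner and decouple across scales. Each scale-wise piece can then be recast as (or absorbed into) an $i$-tree with $i \ge 2$, and Lemma \ref{lem:080825-1} provides the uniform $L^2$-bound that ties the scales together. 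With this technical piece in place, the Cauchy--Schwarz step closes the argument.
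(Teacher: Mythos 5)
The paper offers essentially no proof of this lemma (one line: it is attributed to Pramanik--Terwilleger and declared ``easy to show with the help of Lemma~\ref{lem:080825-1}''), so there is no in-paper argument to compare against line by line. What can be said is that the reduction you attempt does not close, for the reason you half-identify yourself.

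Your Cauchy--Schwarz step, after cancelling $\|F_1\|_2$, reduces the lemma to the inequality
$A=\bigl(\sum_{s\in\T}|\langle f,\varphi_s\rangle|^2\bigr)^{1/2}\lesssim\sqrt{|I_t|}\,{\rm Size}(\T)$.
This is not a reduction but a strictly stronger claim, and it is false for trees whose $1$-tree part is large. After partitioning $\T=\bigsqcup_i\T_i$, the bound you want works for $i\ge 2$ because then $\T_i$ itself is an $i$-tree with top $t$ and $\sum_{s\in\T_i}|\langle f,\varphi_s\rangle|^2=|I_t|\,{\rm size}(\T_i)^2$. But on $\T_1$ the only information extractable from the size functional (which, by its very definition in this paper, sees only $i$-trees with $i\ge 2$) is the tile-by-tile bound $|\langle f,\varphi_s\rangle|^2\le |I_s|\,{\rm Size}(\T)^2$, coming from singleton $i$-trees. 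Summing this over $\T_1$ gives $\sum_{s\in\T_1}|\langle f,\varphi_s\rangle|^2\le{\rm Size}(\T)^2\sum_{s\in\T_1}|I_s|$, and $\sum_{s\in\T_1}|I_s|$ grows like (number of distinct scales)$\times|I_t|$; it is not $O(|I_t|)$. Your proposed repair for $\T_1$ --- group by scale, invoke Littlewood--Paley decoupling, and then ``recast each scale-wise piece as an $i$-tree with $i\ge 2$'' --- does not close this gap: the tiles of $\T_1$ at a fixed scale already constitute an $i$-tree with $i\ge 2$ (they share a common $\omega_s$, hence a common top can be chosen), and the size of that $i$-tree only controls the single-scale $\ell^2$ mass. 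Summing over scales reintroduces the unbounded scale count. Decoupling across scales controls $\|F_1\|_2$, not $A$; it cannot make $A$ small.

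The underlying issue is that Cauchy--Schwarz discards the cancellation in the $1$-tree sum, which is precisely what one must keep. In the Lacey--Thiele/Pramanik--Terwilleger treatment the $L^2$ estimate for a tree is obtained by splitting off $\T_1$ and treating $\sum_{s\in\T_1}\alpha_s\langle f,\varphi_s\rangle\varphi_s$ via the almost-orthogonality underlying Lemma~\ref{lem:080825-1} (or, equivalently, via a Calder\'on--Zygmund / Littlewood--Paley argument on the \emph{function} $F_1$, not on the coefficient sequence), while the $i\ge 2$ parts use frequency disjointness across scales exactly as you do. Your treatment of the $i\ge 2$ pieces and your use of the Bessel-type bound for $B$ are on the right track; the step that cannot be salvaged in the form you wrote it is the claim about $A$.
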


To obtain the pointwise estimate,
we fix a point $x \in J$
such that $|I_s|>2^n|J|$
and $x \in E_{s(2^n)}$ for some $s \in \T$.

We define
\begin{align*}
\omega_+&=\omega_+(x;J)
:=
\bigcup\{ \omega_s \, : \, 
s \in \T, \, x \in E_{s(2^n)}, \, |I_s|>2^n|J| \}\\
\omega_-&=\omega_-(x;J)
:=
\bigcap\{ \omega_{s(2^n)} \, : \, 
s \in \T, \, x \in E_{s(2^n)}, \, |I_s|>2^n|J| \}.
\end{align*}
A geometric observation shows the following.
\begin{lemma}{\rm \cite{PrTe}}
Let $s \in \T$.
\begin{enumerate}
\item
If $\omega_+$ is a proper subset of $\omega_s$,
then we have
$\displaystyle \frac{6}{5}\omega_+ \cap \frac{1}{5}\omega_s=\emptyset$.
\item
$\omega_- \subsetneq \omega_s \subset \omega_+$
if and only if 
$|I_s|>2^n|J|$.
If this is the case,
then we have
$\displaystyle \frac{1}{5}\omega_s \subset \frac{27}{25}\omega_+
\setminus \frac{6}{5}\omega_-$.
\item
If $\omega_-$ contains $\omega_s$,
then we have
$\displaystyle \frac{1}{5}\omega_s \subset \frac{6}{5}\omega_-$.
\end{enumerate}
\end{lemma}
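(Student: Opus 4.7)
The approach is geometric: I plan to show that $\omega_+$, $\omega_-$ and the relevant $\omega_s$ share a common corner in $\R^n$, and then to check the inclusions and disjointnesses by coordinate-wise computation. First I would establish the structural fact. Every $s' \in \T$ appearing in the definition of $\omega_+$ satisfies $s' \le t$, so each $\omega_{s'}$ contains $\omega_t$; the $\omega_{s'}$ therefore form a totally ordered chain of dyadic cubes, $\omega_+$ is its largest element, and $\omega_-$ is the smallest of the $(2^n)$-children $\omega_{s'(2^n)}$. The requirement $N(x) \in \omega_{s'(2^n)}$ forces each chain element to sit as the $(2^n)$-child (the lexicographically maximal child) of the next larger one, so $\omega_{s'}$, $\omega_-$ and $\omega_+$ all share a common ``top-right'' corner $P_* \in \R^n$. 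The same argument shows that any $s \in \T$ with $x \in E_{s(2^n)}$ also has top-right corner $P_*$.

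Part (3) follows trivially from $\frac{1}{5}\omega_s \subset \omega_s \subset \omega_- \subset \frac{6}{5}\omega_-$. For the equivalence in (2), the chain elements are by construction exactly the $\omega_{s'}$ with $|I_{s'}|>2^n|J|$, so the condition $\omega_- \subsetneq \omega_s \subset \omega_+$ on a tile $s$ with $x \in E_{s(2^n)}$ is equivalent to $\omega_s$ lying in the chain, which in turn is equivalent to $|I_s|>2^n|J|$. The inclusion $\frac{1}{5}\omega_s \subset \frac{27}{25}\omega_+$ is immediate from $\omega_s \subset \omega_+$. For the substantive disjointness claims in (1) and (2), I would translate to put $P_*=0$, write each cube as a product of intervals of the form $[-\ell,0]$, and compute the ranges of the $\frac{1}{5}$- and $\frac{6}{5}$-dilates explicitly in each coordinate. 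The dyadic factor-of-two gap in sidelengths that is forced by the strict inclusions then provides exactly the separation required.

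The main obstacle will be the borderline configurations where the sidelength ratio between the larger and smaller cube is exactly two. In that regime the direct coordinate calculation is tight, and one needs additional input from the chain structure, specifically the maximality of $\omega_+$ and the minimality of $\omega_-$, to handle the extremal cases. For instance, $\ell(\omega_s) = 2\ell(\omega_+)$ in (1) would force $\omega_+$ to coincide with $\omega_{s(2^n)}$, which conflicts with the definition of $\omega_+$ as the largest chain element; a parallel argument is needed at the analogous borderline in (2).
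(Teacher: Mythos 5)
The paper does not actually prove this lemma; it is quoted directly from \cite{PrTe}. So I will assess your argument on its own terms.

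Your proof rests on the structural claim that the chain elements $\omega_{s'}$ (and hence $\omega_+$, $\omega_-$, and any $\omega_s$ with $x\in E_{s(2^n)}$) all share a common ``top-right'' corner $P_*$, because ``$N(x)\in\omega_{s'(2^n)}$ forces each chain element to sit as the $(2^n)$-child of the next larger one.'' This is false. In $\R^1$ take $\omega_{s_2}=[0,1)$, $\omega_{s_1}=[\tfrac12,\tfrac34)$, and $N(x)=0.7$. Then $N(x)\in\omega_{s_1(2)}=[\tfrac58,\tfrac34)$ and $N(x)\in\omega_{s_2(2)}=[\tfrac12,1)$, both dyadic cubes contain a common $\omega_t\subset[\tfrac12,\tfrac34)$, and $|I_{s'}|>2^n|J|$ is easy to arrange --- yet the right endpoints $\tfrac34$ and $1$ differ, so $\omega_{s_1}$ is \emph{not} the $(2)$-child of $\omega_{s_2}$ and there is no shared corner. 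The condition $N(x)\in\omega_{s'(2^n)}$ only forces $\omega_{s'}$ to lie in the $(2^n)$-child of any strictly larger chain element; it does not pin $\omega_{s'}$ to the extreme corner. Consequently your coordinate normalization ``write each cube as $[-\ell,0]$'' is unavailable, and the borderline argument at the end (``$\ell(\omega_s)=2\ell(\omega_+)$ would force $\omega_+=\omega_{s(2^n)}$, which conflicts with the definition of $\omega_+$'') is not a contradiction at all: $\omega_+$ can perfectly well equal $\omega_{s(2^n)}$ when $s$ is \emph{not} one of the tiles defining the chain.

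The nesting fact that \emph{is} available and that actually drives the disjointness is weaker but sufficient: whenever $\omega_\pm\subsetneq\omega_s$ with $N(x)\in\omega_{s(2^n)}$, both $\omega_\pm$ and $\omega_{s(2^n)}$ are dyadic cubes containing $N(x)$ with $\ell(\omega_\pm)\le\ell(\omega_{s(2^n)})$, hence $\omega_\pm\subset\omega_{s(2^n)}$. You should replace the corner argument by this inclusion. A second issue: you appear to read $\tfrac15\omega_s$ as the $\tfrac15$-dilate about $c(\omega_s)$. Under that reading the disjointness in part~(1) fails even in your normalized picture (take $\omega_+$ exactly half the size of $\omega_s$ and compute: the intervals overlap). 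The set $\tfrac15\omega_s$ here has to be understood as the cube of side $\tfrac15\ell(\omega_s)$ centered at $c(\omega_{s(1)})$ --- that is, as $\supp\cF\varphi_s$ --- consistent with how the lemma is used to split the multiplier. With that reading and with $\omega_\pm\subset\omega_{s(2^n)}$, the coordinate-wise computation you propose does close: in each coordinate $\tfrac15\omega_s$ lies within $[0.15,0.35]\ell(\omega_s)$ of the lower endpoint of $\omega_s$, while $\tfrac65\omega_\pm$ stays within $[0.45,1.05]\ell(\omega_s)$, giving disjointness with room to spare and with no delicate borderline cases. Your treatment of part~(3) and of the equivalence in part~(2) is fine and does not rely on the false corner claim.
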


In view of this observation,
we have
\begin{align*}
F_{2,J}(x)
=
\left(
a_{\xi,c(\omega_+),\ell(\omega_+)}(x,D)
-
a_{\xi,c(\omega_-),\ell(\omega_-)}(x,D)
\right)F_1(x)|_{\xi=N(x)}.
\end{align*}

Let $\omega_+=\omega_u$ and $\omega_-=\omega_{v(2^n)}$
with $u,v \in \T$.
We apply Proposisition \ref{prop:Cotlar}
with $\eta_0=N(x)$ and $\eta_1=c(\omega_\T)$
to obtain
\begin{align*}
|F_{2,J}(\bar{x})|
&\lesssim
M_{\ge \ell(J)}F_1(\bar{x})
+
\inf_{z \in Q(\bar{x},\ell(J))}
\sup_{\varepsilon>0}
|a(x,D-c(\omega_\T))[\chi_{\R^n \setminus Q(z,\varepsilon)}F_1](z)|.
\end{align*}
Let us set
\[
F_3(\bar{x})
:=
M F_1(\bar{x})
+
\sup_{\varepsilon>0}
|a(x,D-c(\omega_\T))[\chi_{\R^n \setminus Q(\bar{x},\varepsilon)}F_1](\bar{x})|
\]
for $\bar{x} \in \R^n$.
Here $M$ denotes the usual Hardy-Littlewood maximal operator.

In view of this result and the fact that $4\ell(J) \le \ell(I_s)$,
we obtain
\[
\int |F_{2,J}(y)|\,dy
\lesssim
\left|J \cap \bigcup_{s \in \T, \, |I_s|>2^n|J|}E_{s(2^n)}\right|
\cdot
\frac{1}{|J|}
\int_{J}F_3(y)\,dy.
\]
Hence it follows 
from the H\"{o}lder inequality 
that
\begin{align*}
\sum_J \int |F_{2,J}(y)|\,dy
&\lesssim
{\rm Dense}(\T)
\sum_{J \in {\cal J},|I_t|>2^n|J|} 
\sqrt{|J|\int_J F_3(y)^2\,dy}\\
&\lesssim
{\rm Dense}(\T)
\sqrt{
\sum_{J \in {\cal J},|I_t|>2^n|J|}
|J|
\int_{\R^n}F_3(y)^2\,dy
}.
\end{align*}

Since $M$ and $a(x,D)$ are both $L^2$-bounded,
we obtain
\[
\int_{\R^n}F_3(y)^2\,dy
\lesssim
\int_{\R^n}|F_1(y)|^2\,dy
\lesssim
|I_t|{\rm Size}(\T)^2
\]
from Lemma \ref{lem:080824-6}.
If we combine our observations,
we obtain 
\[
\sum_J \int |F_{2,J}(y)|\,dy
\lesssim
{\rm Dense}(\T){\rm Size}(\T)|I_t|,
\]
yielding the desired result.

\section{Self-extension}
\label{section:extension}
\noindent

Finally in the present paper,
we consider the self-extension of the main result.
With Theorem \ref{thm:main} established,
we can prove the following result
using the result in \cite{Grafakos-Tao-Terwilleger}.

\begin{theorem}
\label{thm:main4}
Suppose that $1<p<\infty$.
Then we have
\[
\left\|
\sup_{\xi \in \Q^n}|M_{-\xi}a(x,D)M_{\xi}f|
\right\|_p
\lesssim_p
\| f \|_p.
\]
\end{theorem}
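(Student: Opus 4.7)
The plan is to follow the restricted weak-type/interpolation strategy of Grafakos, Tao, and Terwilleger \cite{Grafakos-Tao-Terwilleger}, bootstrapping the $L^2$ weak-type estimate of Theorem \ref{thm:main} to strong $L^p$ bounds. After linearization by a measurable choice function $N:\R^n\to\Q^n$, it suffices to prove that the operator $T_Nf(x):=M_{-N(x)}a(x,D)M_{N(x)}f(x)$ maps $L^p$ to $L^p$ uniformly in $N$.

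The core step is to establish a restricted weak-type $(p,p)$ inequality for each $p\in(1,\infty)$: given bounded measurable sets $F,G\subset\R^n$ of finite measure, I would construct a major subset $G'\subset G$ with $|G'|\ge|G|/2$ by excising those points where the Hardy-Littlewood maximal function of $\chi_F$ exceeds a suitable multiple of $|F|/|G|$, and prove
\[
|\langle T_N\chi_F,\chi_{G'}\rangle|\lesssim_p|F|^{1/p}|G|^{1/p'}.
\]
Marcinkiewicz interpolation then promotes restricted weak-type at every such $p$ to strong $L^p$ boundedness, which by duality and a standard approximation argument yields the stated maximal estimate.

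To prove the pairing bound I would rerun the tile machinery of Section \ref{section:proofs} with two independent normalizations. The density of a tile would be redefined with $E$ replaced by $G'$, namely
\[
{\rm dense}(s):=\sup_{s'\ge s}\int_{G'\cap N^{-1}[\omega_{s'}]}\left(1+\frac{|x-c(I_{s'})|}{\ell(I_{s'})}\right)^{-20n}\frac{dx}{|I_{s'}|},
\]
while the size would continue to be controlled through the pairings $\langle\chi_F,\varphi_s\rangle_{L^2}$, rescaled by $|F|^{1/2}$ rather than $\|f\|_2$. The major-subset construction forces ${\rm dense}(s)\lesssim|F|/|G|$ on every tile contributing to the bilinear form, so the density lemma, the size lemma, the selection of Corollary \ref{cor:density-size}, and the tree estimate of Theorem \ref{thm:main3} all carry over with only cosmetic changes. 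Crucially, neither the Cotlar estimate of Proposition \ref{prop:Cotlar} nor the truncated singular integral inequality (\ref{eq:080823-3}) is sensitive to the specific normalizations $|E|\le1$ or $\|f\|_2=1$, so they may be reused verbatim.

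The main obstacle is the bookkeeping of the $j$-summation in Corollary \ref{cor:density-size}: one must check that when the upper bounds on ${\rm Dense}(\U_j)$ and ${\rm Size}(\U_j)$ are re-balanced against $|F|$ and $|G|$ rather than against $|E|$ and $\|f\|_2$, the resulting geometric series in $j$ still converges and recombines with the correct exponents to produce $|F|^{1/p}|G|^{1/p'}$ across the whole range $p\in(1,\infty)$. This is the technical heart of the Grafakos-Tao-Terwilleger argument, and since the $x$-dependence of $a(x,\xi)$ has already been absorbed into Proposition \ref{prop:Cotlar}, no new analytic difficulties beyond the time-frequency bookkeeping need arise.
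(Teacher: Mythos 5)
Your proposal follows the same route as the paper, which at this point simply cites \cite{Grafakos-Tao-Terwilleger} and asserts that the argument carries over once the basic estimate (Theorem~\ref{thm:main2}) is in place. Your sketch correctly fills in what that reference entails---linearization, restricted weak-type via a major-subset construction, rerunning the tile machinery with the density/size normalizations rebalanced against $|F|$ and $|G|$, and Marcinkiewicz interpolation---and correctly notes that Proposition~\ref{prop:Cotlar} and (\ref{eq:080823-3}) are insensitive to those normalizations, so no step differs in substance from the paper's intended (but omitted) argument.
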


\begin{proof}
The proof of Theorem \ref{thm:main4}
is completely the same as \cite{Grafakos-Tao-Terwilleger}
once we prove the basic estiate, Theorem \ref{thm:main2}.
We omit the details.
\end{proof}

\section*{Acknowledgement}
\noindent

The author is grateful to Professor E.~Terwilleger
for her encouragement.

\paragraph{Yoshihiro Sawano}
Department of Mathematics,\\ 
Gakushuin University, \\
1-5-1 Mejiro, Toshima-ku, \\
Tokyo 171-8588, Japan\\
e-mail\,:yosihiro@math.gakushuin.ac.jp

\end{document}